\numberwithin{equation}{section}
\newcolumntype{M}[1]{>{\centering\arraybackslash}m{#1}} 
\DeclareFontFamily{OMS}{rsfs}{\skewchar\font'60}
\DeclareFontShape{OMS}{rsfs}{m}{n}{<-5>rsfs5 <5-7>rsfs7 <7->rsfs10 }{}
\DeclareSymbolFont{rsfs}{OMS}{rsfs}{m}{n}
\DeclareSymbolFontAlphabet{\scr}{rsfs}
\DeclareSymbolFontAlphabet{\scr}{rsfs}
\theoremstyle{plain}
\newtheorem{thm}{Theorem}[section]
\newtheorem{lem}[thm]{Lemma}
\newtheorem{prop}[thm]{Proposition}
\DeclareMathOperator{\Pic}{Pic}
\DeclareMathOperator{\lcm}{lcm}
\DeclareMathOperator{\Cl}{Cl}
\DeclareMathOperator{\ct}{ct}
\newcommand{\sF}{\scr{F}}
\newcommand{\sM}{\scr{M}}
\newcommand{\sN}{\scr{N}}
\newcommand{\sO}{\scr{O}}
\newcommand{\sQ}{\scr{Q}}
\newcommand{\sS}{\scr{S}}
\newcommand{\sT}{\scr{T}}
\newcommand{\cR}{\mathcal R}
\newcommand{\bC}{\mathbb{C}}
\newcommand{\bP}{\mathbb{P}}
\newcommand{\bQ}{\mathbb{Q}}
\newcommand{\bZ}{\mathbb{Z}}
\newcommand{\bfB}{\mathbf{B}}
\setlist[enumerate]{label={\rm(\thethm.\arabic*)}, ref=\thethm.\arabic*}
\title{Kawamata--Miyaoka-type inequality for $\bQ$-Fano varieties with canonical singularities II: Terminal $\bQ$-Fano threefolds}
\author{Haidong Liu}
\address{Sun Yat-Sen University, Department of Mathematics, Guangzhou, 510275, China}
\email{liuhd35@mail.sysu.edu.cn, 
jiuguiaqi@gmail.com}
\author{Jie Liu}
\address{Institute of Mathematics, Academy of Mathematics and Systems Science, Chinese Academy of Sciences, Beijing, 100190, China}
\email{jliu@amss.ac.cn}
\begin{document}



\maketitle

\begin{prelims}

\DisplayAbstractInEnglish

\bigskip

\DisplayKeyWords

\medskip

\DisplayMSCclass

\end{prelims}


\newpage

\setcounter{tocdepth}{1}

\tableofcontents


\section{Introduction}
	
	A normal projective variety is called \emph{Fano} if its anti-canonical divisor is $\bQ$-Cartier and ample. A Fano variety is called \emph{$\mathbb Q$-Fano} if it is $\mathbb Q$-factorial and its Picard number is one. According to the minimal model program, $\bQ$-Fano varieties with terminal singularities are one of the building blocks of algebraic varieties, and they form a bounded family (see \cite{Kawamata1992,KollarMiyaokaMoriTakagi2000,Birkar2021}).
	
	Thanks to Reid's orbifold Riemann--Roch formula, see~\cite{Reid1987}, terminal $\bQ$-Fano threefolds have been studied intensively in the past two decades; see \cite{AltinokBrownReid2002,Suzuki2004,BrownSuzuki2007a,BrownSuzuki2007,Prokhorov2010,Prokhorov2013,ChenJiang2016,BrownKasprzyk2022,Prokhorov2022a,Prokhorov2022} and the references therein. In particular, all the possibilities for their numerical types are obtained in the \emph{Graded Ring Database} (\textsc{Grdb} for short, \cite{BrownKasprzyk2009}). 
	
	One of the key ingredients in the computation of possible numerical types of terminal $\bQ$-Fano threefolds is an effective version of the \emph{Kawamata--Miyaoka-type inequality} (see \cite[Proposition 1]{Kawamata1992}, \cite[Theorem~1.7]{Suzuki2004} and \cite[Theorem 2.6]{BrownKasprzyk2022} for more details). In our previous paper \cite{LiuLiu2023}, we established such an effective Kawamata--Miyaoka-type inequality for terminal $\bQ$-Fano varieties in arbitrary dimension. In particular, using a refined argument and Reid's orbifold Riemann--Roch formula, we have proved the following inequality in dimension three, which significantly improves the known ones in the literature.
	
	\begin{thm}[\textit{cf.} \protect{\cite[Theorem 1.2]{LiuLiu2023}}]
		\label{t.LiuLiu3folds}
		Let $X$ be a terminal $\bQ$-Fano threefold. Then we have
		\[
		c_1(X)^3 \leq  \frac{25}{8} c_2(X)c_1(X).
		\]
	\end{thm}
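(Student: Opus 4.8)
The plan is to combine the semipositivity of $T_X$ forced by rational connectedness with the Bogomolov--Gieseker inequality applied to the Harder--Narasimhan pieces of $T_X$, and then to extract the remaining slack from the terminal singularities via Reid's orbifold Riemann--Roch formula. Throughout put $A:=-K_X$, which is ample; since $X$ is $\bQ$-factorial of Picard number one, $N^1(X)_{\bQ}=\bQ\cdot A$, so every $\bQ$-divisor class is a rational multiple of $A$, it is pseudoeffective exactly when that multiple is non-negative, and big exactly when it is positive.

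First I would assemble the positivity input. A terminal $\bQ$-Fano threefold is klt and rationally connected, so $H^0(X,\Omega_X^{[p]})=0$ for all $p\geq 1$; together with the Bogomolov--Sommese vanishing theorem for klt spaces --- a rank-one reflexive subsheaf $\mathcal{L}\subseteq\Omega_X^{[p]}$ has Kodaira dimension $\kappa(X,\mathcal{L})\leq p$ --- this yields: (i) $\Omega_X^{[1]}$ carries no rank-one reflexive subsheaf of positive degree, equivalently $T_X$ is generically $A$-semipositive with $\mu_{\min,A}(T_X)\geq 0$; and (ii) any saturated rank-one subsheaf $\mathcal{L}\subseteq T_X$ satisfies $c_1(\mathcal{L})\cdot A^2\leq\tfrac{\iota-1}{\iota}A^3$, where $\iota$ is the Fano index, since $\mathcal{L}\otimes\sO_X(K_X)$ is then a reflexive subsheaf of $\Omega_X^{[2]}$ and hence cannot be big.

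Next I would run the Harder--Narasimhan filtration $0=\mathcal{F}_0\subset\dots\subset\mathcal{F}_k=T_X$ with respect to $A$, with semistable quotients $\mathcal{Q}_i$ of rank $r_i$ and slope $\mu_i=c_1(\mathcal{Q}_i)\cdot A^2/r_i$; then $\sum r_i=3$, $\mu_1\geq\dots\geq\mu_k\geq 0$ by (i), and $\sum r_i\mu_i=A^3$. Feeding the Bogomolov--Gieseker inequality $\bigl(2r_ic_2(\mathcal{Q}_i)-(r_i-1)c_1(\mathcal{Q}_i)^2\bigr)\cdot A\geq 0$ for $A$-semistable reflexive sheaves on terminal threefolds, the identity $c_1(\mathcal{Q}_i)\cdot c_1(\mathcal{Q}_j)\cdot A=(r_i\mu_i)(r_j\mu_j)/A^3$ valid because $\rho(X)=1$, and the expansion $c_2(T_X)\cdot A=\sum_i c_2(\mathcal{Q}_i)\cdot A+\sum_{i<j}c_1(\mathcal{Q}_i)c_1(\mathcal{Q}_j)\cdot A$ into one another gives
\[
c_2(X)\cdot(-K_X)\;\geq\;\frac{1}{(-K_X)^3}\Bigl(\sum_i\frac{r_i-1}{2r_i}(r_i\mu_i)^2+\sum_{i<j}(r_i\mu_i)(r_j\mu_j)\Bigr),
\]
where $c_2(X)$ is the $\bQ$-second Chern class of $\Omega_X^{[1]}$ (the quantity in the statement, for which Bogomolov--Gieseker is available on klt spaces). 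When $T_X$ is $A$-semistable this already delivers the sharp bound $(-K_X)^3\leq 3\,c_2(X)\cdot(-K_X)$; the loss to $\tfrac{25}{8}$ is confined to the unstable cases, which by the degree bounds of the previous step can occur only when $\iota\geq 2$.

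The main obstacle is precisely that unstable case, where $T_X$ contains a saturated sub-line-bundle (or rank-two subsheaf) of slope exceeding $\mu(T_X)=\tfrac13 A^3$, so the right-hand side above degrades. Here I would bring in two further ingredients. First, a destabilizing rank-one subsheaf is a foliation by rational curves, to which one applies Miyaoka-type inequalities for such foliations (Bogomolov--McQuillan, Kebekus--Sol\'a-Conde--Toma) relating $c_1(\mathcal{L})$, $c_1(T_X/\mathcal{L})$ and $c_2(X)$ and sharpening bound (ii). Second, Reid's orbifold Riemann--Roch: from $\chi(\sO_X)=1$ it pins down $(-K_X)\cdot c_2(X)$ as a definite expression in the singularity basket $\mathcal{B}$, of the shape $24-\sum_{Q\in\mathcal{B}}(r_Q-r_Q^{-1})$, while non-negativity of $h^0(X,-nK_X)=\chi(X,-nK_X)$ constrains $(-K_X)^3$ and $\mathcal{B}$ jointly and rules out excessively singular baskets. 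Combining these, optimizing over the admissible slope/basket configurations, and treating the finitely many numerically extremal baskets by direct computation (appealing to the classification in the Graded Ring Database if needed), one isolates $\tfrac{25}{8}$ as the correct constant. The subtle points are making the Chern-class computations rigorous on the singular $X$ --- orbifold $\bQ$-Chern classes, and the extension of the Bogomolov inequality to reflexive sheaves on klt threefolds --- and carrying out the final combinatorial optimization that pins the constant exactly.
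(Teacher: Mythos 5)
The paper does not prove Theorem~\ref{t.LiuLiu3folds} here at all; it is quoted directly from \cite[Theorem 1.2]{LiuLiu2023}. Your sketch follows essentially the same route as that reference (as reflected in how the present paper uses it, \emph{e.g.}\ via \cite[Proposition 3.6, Proposition 3.8, Theorem 4.2, Corollary 4.3]{LiuLiu2023}): the Harder--Narasimhan filtration of $\sT_X$, the Bogomolov--Gieseker inequality on the semistable pieces combined with generic ampleness and foliation-theoretic control of the maximal destabilizing subsheaf, and Reid's orbifold Riemann--Roch together with an enumeration of baskets to isolate the constant $25/8$.
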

	
	In this paper, we aim to continue in this direction. To give a precise statement, let us recall the definition of Fano indices. In general, let $X$ be a singular Fano variety. We can define the Fano index of $X$ in two different ways:
	\begin{equation*}
		\begin{split}
			q\mathbb Q(X) &\coloneqq\max\{q \mid -K_X \sim_{\mathbb Q}qA, \; A\in \Cl X \},\\
			qW(X) &\coloneqq\max\{q \mid -K_X \sim qB, \; B\in \Cl X \}.
		\end{split}
	\end{equation*}
	If we assume in addition that $X$ has at worst terminal singularities, then the Weil divisor class group $\Cl(X)$ is finitely generated and the numerical equivalence for $\bQ$-Cartier Weil divisors coincides with the $\bQ$-linear equivalence. This implies that both $q\mathbb Q(X)$ and $qW(X)$ are positive integers. 
	
	The main result of this paper is the following optimal Kawamata--Miyaoka-type inequality for terminal $\bQ$-Fano threefolds with large $q\bQ$.
	
	\begin{thm}[\textit{cf.} Lemma~\ref{c.QFano3foldscandidates}, Theorems~\ref{t.qQ=5} and~\ref{t.qQ=4}]
		\label{t.main-thm}
		Let $X$ be a terminal $\bQ$-Fano threefold such that $q\bQ(X)\geq 3$. Then we have
		\[
		c_1(X)^3 \leq \frac{121}{41} c_2(X)c_1(X),
		\]
		and  equality holds if and only if\, $X$ is isomorphic to $\bP(1,2,3,5)$.
	\end{thm}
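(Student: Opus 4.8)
The plan is to reduce the statement to a finite case analysis indexed by the numerical type of $X$, and then within each case to bound $c_1(X)^3/(c_2(X)c_1(X))$ using Reid's orbifold Riemann--Roch formula together with the constraints on the basket of singularities. Since $q\bQ(X) \geq 3$, the anti-canonical divisor is $-K_X \sim_{\bQ} q A$ with $q \in \{3,4,5,\dots\}$; by the boundedness results invoked above (and Reid--Fletcher--Prokhorov type classification) the index $q\bQ(X)$ is in fact bounded and the possible numerical types form an explicit finite list, which is precisely the content of the cited Lemma~\ref{c.QFano3foldscandidates}. So the first step is to quote that lemma to pin down the finite list of candidate numerical types $(q, A^3, A\cdot c_2, \text{basket})$, and to observe that the cases $q\bQ(X)=5$ and $q\bQ(X)=4$ are handled by Theorems~\ref{t.qQ=5} and~\ref{t.qQ=4}, respectively, leaving the bulk of the argument for $q\bQ(X)=3$ (and, a fortiori, covering any sporadic higher-index cases).

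The core analytic input is the relation, coming from orbifold Riemann--Roch and the fact that $h^0(X, -mK_X)$ is a genuine nonnegative integer for all $m \geq 0$ (indeed $h^i = 0$ for $i>0$ by Kawamata--Viehweg vanishing since $-K_X$ is ample). Writing $-K_X = q A$ and expanding $\chi(X, \sO_X(mA))$ as a quasi-polynomial in $m$ whose leading term is $\tfrac{(mq)^3}{6}\,\tfrac{A^3}{q^3} + \cdots$ and whose correction terms are the Dedekind-sum contributions $\sum_{Q} c_Q(mA)$ from the basket, one extracts both an upper bound and a lower bound on $A^3$ in terms of $A\cdot c_2(X)$ and the basket data, by imposing $\chi(X,\sO_X(mA)) \geq 0$ for the finitely many residues $m \bmod r$ that matter and $\chi(X,\sO_X(mA)) \geq \chi(X,\sO_X)=1$ for appropriate small $m$. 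Concretely, I would: (i) for each candidate basket compute the periodic correction function exactly; (ii) combine the resulting linear inequalities in the unknowns $A^3$ and $A\cdot c_2$ with the Bogomolov--Miyaoka--Yau-type constraint already available from Theorem~\ref{t.LiuLiu3folds}; (iii) solve the resulting small linear program to see that $c_1^3/(c_2 c_1) = q^2 A^3/(q\, A\cdot c_2) \leq \tfrac{121}{41}$ in every case, with equality forced only by the numerical type $(q,A^3,A\cdot c_2,\text{basket})$ of $\bP(1,2,3,5)$.

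For the equality clause one checks first, by direct computation on $\bP(1,2,3,5)$ (a well-formed terminal weighted projective space with $q\bQ = 11$ divided appropriately — here the relevant sub-index giving $q\bQ \geq 3$), that its Chern numbers realize $c_1^3 = \tfrac{121}{41} c_2 c_1$; then one shows conversely that if equality holds, all the inequalities used in step (ii) above are simultaneously equalities, which forces the basket, $A^3$, and $A\cdot c_2$ to take exactly the values of this weighted projective space, and a terminal $\bQ$-Fano threefold with that numerical type and $q\bQ \geq 3$ is rigid enough to be identified with $\bP(1,2,3,5)$ (using, e.g., that $-K_X$ then has a section cutting out the expected del Pezzo or the Reid--Iano-Fletcher classification of the corresponding graded ring). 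The main obstacle I anticipate is step (i)--(iii) for $q\bQ(X)=3$: there the list of candidate baskets is longest and the correction functions are least rigid, so the linear-programming bound must be run carefully basket-by-basket, and showing the constant $\tfrac{121}{41}$ is never exceeded — rather than some slightly weaker bound — requires the sharp form of the Riemann--Roch inequalities (using several values of $m$, not just one) exactly as in the refinement underlying Theorem~\ref{t.LiuLiu3folds}. The cases $q\bQ(X) \in \{4,5\}$, by contrast, should be short: the basket is highly constrained, often forcing $X$ itself to be one of a tiny explicit list of weighted complete intersections, after which the inequality is a finite check.
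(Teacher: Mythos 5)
Your overall skeleton (reduce to a finite list of numerical types via orbifold Riemann--Roch, then handle the surviving cases) matches the paper's, but your assessment of where the difficulty lies is inverted, and this hides a genuine gap. The case $q\bQ(X)=3$ is not ``the bulk of the argument'': it is disposed of entirely inside Lemma~\ref{c.QFano3foldscandidates} by the computer search, because the bound $b_X<3$ from \cite[Theorem 4.4]{LiuLiu2023} together with the constraints \eqref{eq.range}--\eqref{eq.coprimeandint} leaves no candidate with $b_X\geq 121/41$ for $q=3$ (the only survivor outside $q\in\{4,5\}$ is the $q=11$ type realised by $\bP(1,2,3,5)$). Conversely, the cases $q\bQ(X)=4$ and $q\bQ(X)=5$ are not ``a finite check'': the search leaves exactly one numerical type in each, namely $\cR_X=\{7,13\}$ with $b_X=3$ and $\cR_X=\{3,7^2\}$ with $b_X=25/8$, and both satisfy every constraint your linear program can impose --- nonnegativity and integrality of $\chi(tA)$, Kawamata--Viehweg vanishing, $\gcd(r_X,q)=1$, $r_X A^3\in\bZ$. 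No refinement of the Riemann--Roch inequalities will exclude them, since they are perfectly consistent numerical types; they must be killed geometrically.

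That geometric input is the actual content of the paper and is absent from your proposal. For $q=5$ the paper shows that the maximal destabilizing subsheaf of $\sT_X$ is a rank-two algebraically integrable foliation whose singular locus has codimension two (Lemma~\ref{l.singularlocus}), yielding the lower bound $c_2(X)c_1(X)>\frac{8}{25}c_1(X)^3$ of Proposition~\ref{p.ineqforn-1}, incompatible with $b_X=25/8$. For $q=4$ the paper runs a Sarkisov link on $\sM=|3A|$ and excludes every possible centre $f(\tilde E)$ through a long case analysis on $(\hat q,e,s_k)$ built on the relation \eqref{eq.key}; this occupies the longest section of the paper. Your proposal would stall exactly where this work begins. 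The equality clause is also lighter than you suggest: once the search leaves only the $q=11$ type with $\dim|-K_X|=23$, the identification $X\cong\bP(1,2,3,5)$ is quoted directly from \cite[Theorem 1.4]{Prokhorov2010} rather than re-derived from graded-ring rigidity.
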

	
	As an immediate application, combining Theorem~\ref{t.main-thm} with \cite[Theorem 4.4]{LiuLiu2023} yields the following generalisation of Theorem~\ref{t.LiuLiu3folds}, which confirms a recent conjecture of K.~Suzuki \cite[Conjecture 1]{Suzuki2024}. 
	
	\begin{thm}\label{thm.b3}
		Let $X$ be a terminal $\bQ$-Fano threefold. Then we have
		\[
		c_1(X)^3 < 3c_2(X)c_1(X).
		\]
	\end{thm}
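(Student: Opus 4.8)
\textbf{Proof proposal for Theorem~\ref{thm.b3}.}

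The plan is to split into two cases according to the Fano index $q\bQ(X)$. When $q\bQ(X)\geq 3$, the inequality follows immediately from Theorem~\ref{t.main-thm}: we have
\[
c_1(X)^3 \leq \frac{121}{41} c_2(X)c_1(X) < 3 c_2(X)c_1(X),
\]
since $\frac{121}{41} = 2.95\ldots < 3$, and we do not even need to worry about the equality case. So the only remaining case is $q\bQ(X) \leq 2$, i.e.\ $q\bQ(X) \in \{1,2\}$. Here I would invoke \cite[Theorem 4.4]{LiuLiu2023}, which (as suggested by the citation in the excerpt, alongside the stated application) should give an effective Kawamata--Miyaoka-type inequality for terminal $\bQ$-Fano threefolds with small Fano index — presumably a bound of the form $c_1(X)^3 \leq \lambda\, c_2(X)c_1(X)$ with some constant $\lambda < 3$. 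One then concludes directly.

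The key steps in order: (1) Recall that for a terminal $\bQ$-Fano threefold, $\Cl(X)$ is finitely generated and $q\bQ(X)$ is a positive integer, so the trichotomy $q\bQ(X) = 1$, $q\bQ(X) = 2$, $q\bQ(X) \geq 3$ is exhaustive. (2) In the case $q\bQ(X) \geq 3$, apply Theorem~\ref{t.main-thm} and note $121/41 < 3$, with the strict inequality being automatic (the borderline variety $\bP(1,2,3,5)$ attains only $121/41$, comfortably below $3$). (3) In the case $q\bQ(X) \leq 2$, apply \cite[Theorem 4.4]{LiuLiu2023} to obtain the desired strict bound. (4) Observe that $c_2(X)c_1(X) > 0$ for any Fano threefold (this positivity, a consequence of e.g.\ the Bogomolov--type bounds or Miyaoka's generic semipositivity, is what makes the inequality meaningful and is implicitly used throughout), so the stated inequality is exactly what the case analysis produces.

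The main obstacle is not in this short deduction at all — it has been entirely displaced into Theorem~\ref{t.main-thm} (the case $q\bQ \geq 3$, the heart of the present paper) and into \cite[Theorem 4.4]{LiuLiu2023} (the case $q\bQ \leq 2$, handled in the prequel). The only thing to verify here is the purely arithmetic point that the worst constant among all cases, namely $\max\{121/41, \lambda\}$ where $\lambda$ is the constant from \cite[Theorem 4.4]{LiuLiu2023}, is still strictly less than $3$; in other words, that Suzuki's conjectural bound $3$ is not attained by any terminal $\bQ$-Fano threefold regardless of index. Given that $121/41 \approx 2.951$, the delicate point is whether the low-index bound from the prequel is also strictly below $3$ — but since the excerpt explicitly states that combining Theorem~\ref{t.main-thm} with \cite[Theorem 4.4]{LiuLiu2023} yields Theorem~\ref{thm.b3}, this is guaranteed, and the proof is a one-paragraph case split.
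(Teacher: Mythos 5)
Your proposal is correct and is exactly the paper's argument: the authors deduce Theorem~\ref{thm.b3} by combining Theorem~\ref{t.main-thm} (the case $q\bQ(X)\geq 3$, with $121/41<3$) with \cite[Theorem 4.4]{LiuLiu2023}, which per the bound \eqref{e.b_Xbounds} gives $b_X<3$ whenever $qW(X)\notin\{4,5\}$ — in particular when $q\bQ(X)\leq 2$, since $qW(X)\mid q\bQ(X)$. Your case split and the arithmetic observation $121/41<3$ are precisely what the paper does.
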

	
	The main idea of the proof of Theorem~\ref{t.main-thm} is as follows. Let $X$ be a terminal $\bQ$-Fano threefold with $q\bQ(X)\geq 3$ which does not satisfy the inequality in Theorem~\ref{t.main-thm}. Using the orbifold Riemann--Roch theorem, \cite[Corollary 4.3]{LiuLiu2023} and \cite[Proposition 3.2]{Prokhorov2024}, we have the following two numerical possibilities for $X$ (see Lemma~\ref{c.QFano3foldscandidates}):
	\begin{enumerate}
		\item $qW(X)=q\bQ(X)=4$ and $\cR_X=\{7,13\}$,
		
		\item $qW(X)=q\bQ(X)=5$ and $\cR_X=\{3,7^2\}$,
	\end{enumerate}
	where $\cR_X$ denotes the \emph{local index basket} of $X$ (see Section~\ref{sub.RR-formula}). The main contribution of this paper is to exclude these two cases. To this end, we will introduce two different approaches. In the first one we use the theory of foliations to rule out the second case; it is relatively simple (see Section~\ref{s.qbQ=5}).  The first case is much more difficult; we will follow a second approach developed by Yu.~Prokhorov in \cite{Prokhorov2010} by using the Sarkisov link (see Section~\ref{s.qbQ=4}).
        
	Theorem~\ref{thm.b3} can be directly applied to show that 13559 numerical types in the \textsc{Grdb} \cite{BrownKasprzyk2009} do not actually occur for terminal $\bQ$-Fano threefolds and the Hilbert series of a terminal $\bQ$-Fano threefold always lies in the list $\mathcal{F}_{\mathrm{ss}}$; see \cite[Theorem 1.2]{BrownKasprzyk2022}. On the other hand, if $q\bQ(X)\geq 9$, then \cite[Section~4]{BrownSuzuki2007a} and \cite{Prokhorov2010} show that there are exactly nine numerical types which can be geometrically realised (\textnumero\,23--31 in Table~\ref{table.qgeq6}). However, the situation becomes much more complicated for smaller $q\bQ$. For instance, by Theorem~\ref{thm.b3}, if $q\bQ(X)=8$, there are four possibilities (\textnumero\,19--22 in Table~\ref{table.qgeq6}) for the numerical type of $X$ with  corresponding local index baskets as follows:
	\[
	\{3^2,5\},\quad \{3,7\},\quad\{5,7\}, \quad\{3,5,11\}.
	\]
	It is known that the first three cases can be geometrically realised by appropriate examples, see \cite[Table 1]{BrownSuzuki2007a}, and we will treat the last case in Section~\ref{s.qQ=8} (see \textnumero\,22 in Table~\ref{table.qgeq6} and Theorem~\ref{t.qQ=8}), from which we derive the following result.
	
	\begin{thm}
		Let $X$ be a terminal $\bQ$-Fano threefold with $q\bQ(X)=8$. Then its local index basket $\cR_X$  cannot be $\{3,5,11\}$.
	\end{thm}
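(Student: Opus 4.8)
Suppose for contradiction that $X$ is a terminal $\bQ$-Fano threefold with $q\bQ(X)=8$ and $\cR_X=\{3,5,11\}$; the plan is to combine Reid's orbifold Riemann--Roch formula with the Sarkisov-link techniques of \cite{Prokhorov2010} to reach a numerical contradiction.

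\emph{Step 1: pin down the numerical data.} By the structure theory of terminal $\bQ$-Fano threefolds one has $qW(X)=q\bQ(X)=8$ (see \cite{Prokhorov2010}), so that $-K_X\sim 8A$ for a primitive ample Weil divisor $A$. Evaluating Reid's formula (Section~\ref{sub.RR-formula}, \cite{Reid1987}) at $D=0$, together with $\chi(\sO_X)=1$, yields
\[
(-K_X)\cdot c_2(X)=24-\sum_{r\in\cR_X}\frac{r^2-1}{r}=\frac{928}{165},\qquad A\cdot c_2(X)=\frac{116}{165}.
\]
Matching the remaining basket contributions then forces the local class of $A$ at each of the three cyclic quotient points, hence determines the complete basket. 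Expanding $\chi(\sO_X(nA))$ as a function of $n$ and imposing (i) its integrality for all $n\in\bZ$, (ii) Serre duality $h^i(nA)=h^{3-i}(-(n+8)A)$ together with Kawamata--Viehweg vanishing (so $\dim|nA|=\chi(\sO_X(nA))-1$ for $n\geq-7$), and (iii) the inequality $(-K_X)^3\leq\tfrac{121}{41}(-K_X)\cdot c_2(X)$ of Theorem~\ref{t.main-thm} (applicable since $q\bQ(X)\geq 3$, and \emph{strict} here because $q\bQ(\bP(1,2,3,5))=11\neq 8$), one is left with only a short explicit list of possibilities for $A^3$ (equivalently, the numerical types of \cite{BrownKasprzyk2009} with $q\bQ=8$ and basket $\{3,5,11\}$); in particular $A^3$ is very small, at most $\tfrac{5}{165}$. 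For each surviving type one records the least $n_0\leq 7$ with $|n_0A|\neq\emptyset$ together with $\dim|n_0A|$.

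\emph{Step 2: the geometric input.} This runs in two complementary ways. First, a general member $S\in|n_0A|$ is a normal surface with $K_S=(K_X+S)|_S=-(8-n_0)A|_S$, hence a log del Pezzo surface whose index is divisible by $8-n_0$, whose singularities lie over the index $3,5,11$ points of $X$ (so $S$ carries cyclic quotient points of orders dividing $3,5,11$), and with $(-K_S)^2=(8-n_0)^2A^3$ bounded by the tiny value from Step 1; the classification of log del Pezzo surfaces with such prescribed singularities and index rules out $S$, and with it $X$. When the surface argument is not by itself conclusive --- e.g. if $n_0$ is close to $8$ --- one runs instead a Sarkisov link: take the Kawamata divisorial extraction $f\colon\tilde X\to X$ of the point $P$ of index $11$, so that $K_{\tilde X}=f^*K_X+\tfrac1{11}E$, $\tilde X$ is terminal and $\rho(\tilde X)=2$, and run the $K_{\tilde X}$-MMP; it terminates in a Mori fiber space $g\colon X'\to Z$. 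Analysing by $\dim Z$: if $\dim Z=0$ then $X'$ is again a terminal $\bQ$-Fano threefold linked to $X$, and transporting the intersection numbers of Step 1 to a common resolution is impossible; if $\dim Z=1$ then $X'/Z$ is a del Pezzo fibration whose degree is incompatible with the $\tfrac1{165}$-denominators of the intersection numbers; if $\dim Z=2$ then $X'/Z$ is a conic bundle whose discriminant curve forces a contradiction. In every branch one obtains a contradiction, so $\cR_X\neq\{3,5,11\}$, which is the assertion. (As an alternative to the link, one may also attempt a foliation-theoretic argument in the spirit of Section~\ref{s.qbQ=5}.)

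\emph{Main obstacle.} The delicate part is the geometric analysis of Step 2: one must enumerate the admissible divisorial extractions of the cyclic quotient points (Kawamata's classification guarantees these are weighted blow-ups, but several centres and several candidate links remain), control the intermediate flips of the MMP, and identify the resulting Mori fiber space, all while tracking discrepancies and intersection numbers. Because the numerics are tight --- $A^3$ is a small fraction with denominator $165$, essentially saturating the bound of Theorem~\ref{t.main-thm} --- the contradictions must be extracted from sharp divisibility and positivity arguments rather than from crude estimates, and verifying that every branch of the case analysis really closes is where the bulk of the work lies.
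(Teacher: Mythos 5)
There is a genuine gap, and also a substantive numerical error. First, your Step 1 leans on the inequality $c_1(X)^3\leq\frac{121}{41}c_2(X)c_1(X)$ of Theorem~\ref{t.main-thm} to narrow the list of candidates, and your closing remark asserts that the numerics ``essentially saturate'' that bound. In fact the numerical type in question has $A^3=4/165$, $c_1(X)^3=2048/165$ and $c_2(X)c_1(X)=928/165$, so $b_X=64/29\approx 2.21$, comfortably \emph{below} $121/41\approx 2.95$. The Kawamata--Miyaoka-type inequality gives no information here; that is precisely why this case is stated as a separate theorem and must be excluded by geometric rather than numerical means. Riemann--Roch does pin the type down uniquely (basket $\{(3,1),(5,2),(11,4)\}$, $\dim|2A|=0$, $\dim|4A|=1$), but item (iii) of your Step 1 does no work.

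Second, Step 2 is a plan rather than a proof, and the plan as stated would not close. The log del Pezzo branch (``the classification \dots rules out $S$'') is an unsupported assertion: no classification is cited, no invariants of $S$ are computed, and nothing forces a contradiction. The Sarkisov-link branch is set up only for the Kawamata extraction of the index-$11$ point, but the link one must run is the one associated to the mobile pencil $\sM=|4A|$ via its canonical threshold, and the centre $f(\tilde E)$ of the resulting extremal extraction can a priori be a curve, a Gorenstein point, or a point of local index $3$, $5$ \emph{or} $11$; all of these must be excluded. The actual contradictions come from the identity $k\hat q=qs_k+(q\beta_k-k\alpha)e$ of \eqref{eq.key} combined with Lemma~\ref{lem.ct}, the integrality constraints \eqref{e.localindex-integer}, the irreducibility statement of Lemma~\ref{l.elementsinMqQ=8}, the classification tables for terminal $\bQ$-Fano threefolds of large index (Tables~\ref{table.qgeq5} and~\ref{table.qgeq6}), and --- in the hardest subcase $\hat q=7$, $e=3$ over the index-$11$ point --- Prokhorov's classification of index-$7$ $\bQ$-Fano threefolds. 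Your claimed contradictions (``transporting intersection numbers is impossible'', ``degree incompatible with the $\tfrac1{165}$-denominators'', ``discriminant curve forces a contradiction'') are not derived from anything and, as phrased, could not be checked or repaired without essentially redoing the whole case analysis.
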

	
	Finally, we remark that Theorem~\ref{thm.b3} cannot be generalised to $\bQ$-Fano threefolds with canonical singularities as $\bP(1,3,7,11)$ shows (see \cite[Section~4.5]{BrownKasprzyk2022}), and our proof of Theorem~\ref{t.main-thm} is not completely independent of previously known non-existence results in the literature (see the proof of Lemma~\ref{c.QFano3foldscandidates} for more details). Moreover, there are also some non-existence results, which cannot be recovered by Theorem~\ref{t.main-thm} (\textit{e.g.}~\textnumero\,5 and \textnumero\,8 in \cite[Lemma 8.1]{Prokhorov2013} and \textnumero\,5, \textnumero\,8 and \textnumero\,9 in \cite[Lemma 9.1]{Prokhorov2013}).
	
	\subsection*{Acknowledgements} 
	We would like to thank Yu.~Prokhorov and C.~Jiang for helpful communications and suggestions. In particular, we thank Yu.~Prokhorov for allowing us to include Table~\ref{table.qWnot=qQ=4} in our appendix. We thank the referees for the detailed remarks and suggestions, which helped us to significantly improve the exposition of this paper and also to simplify some arguments.

	\section{Preliminaries}
	
	Throughout this paper, we work over $\bC$, and varieties are always supposed to be irreducible. We will freely use the terminology of \cite{KollarMori1998} for birational geometry, especially the minimal model program (MMP for short). 
	
	\subsection{Orbifold Riemann--Roch formula}\label{sub.RR-formula}
	
	Let $X$ be a terminal Fano threefold and $q\coloneqq qW(X)$. 
	According to  Reid~\cite{Reid1987}, there is a basket of orbifold points
	\[
	\bfB_X=\left\{\left(r_{i}, b_{i}\right) \mid i=1, \ldots, s ;\, 0<b_{i}\leq\frac{r_{i}}{2} ;\, b_{i} \text{ is coprime to } r_{i}\right\}
	\]
	associated to $X$, where a pair $\left(r_{i},b_{i} \right)$ corresponds to an orbifold point $Q_{i}$ of type $\frac{1}{r_{i}}\left(1, -1, b_{i}\right)$. 
	Denote by $\mathcal{R}_X$ the collection of $r_i$ (permitting weights) appearing in $\bfB_X$, and simply write it as a set of integers whose weights appear in superscripts, say for example 
	\[
	\mathcal{R}_X=\{3,7,7\}=\{3,7^2\}.
	\]
	Note that $r_X \coloneqq \lcm (\mathcal R_X)$ coincides with the Gorenstein index of $X$. 
	Let $A$ be a $\mathbb Q$-Cartier Weil divisor on $X$ such that $-K_X\sim qA$. According to \cite[Corollary~10.3]{Reid1987},
	\begin{equation}\label{eq.reid}
		\chi(tA)=1+\frac{t(q+t)(q+2t)}{12}A^3+\frac{t}{12q}c_2(X)c_1(X) +\sum_{Q\in \bfB_X}c_Q(tA)
	\end{equation}
	for $t\in \bZ$; here if the orbifold point $Q$ is of type $\frac{1}{r_Q}(1,-1,b_Q)$ and $0\leq i_{Q,t}<r_Q$ is the integer uniquely determined by $qi_{Q,t}\equiv -t \mod r_Q$, then
	\[
	c_Q(tA)=-\frac{i_{Q,t}(r^2_Q-1)}{12r_Q}+\sum_{j=0}^{i_{Q,t}-1}\frac{\overline{jb_Q}\left(r_Q-\overline{jb_Q}\right)}{2r_Q},
	\] where the symbol $\overline{\bullet}$ means the smallest residue mod $r_Q$ and $\sum_{j=0}^{-1}\coloneqq0$. 
	
	If $t=-q$, then \eqref{eq.reid} and  Serre's duality imply that
	\begin{equation}\label{eq.range}
		1=\frac{1}{24}c_2(X)c_1(X) +\frac{1}{24}\sum_{Q\in \bfB_X} \left(r_Q-\frac{1}{r_Q}\right).
	\end{equation}
	
	If $q\geq 3$ and $-q <t<0$, then $\chi(tA)=0$ by the Kawamata--Viehweg vanishing theorem. Hence \eqref{eq.reid} implies that
	\begin{equation}\label{eq.chita}
		1+\frac{t(q+t)(q+2t)}{12}A^3+\frac{t}{12q}c_2(X)c_1(X) +\sum_{Q\in \bfB_X}c_Q(tA)=0
	\end{equation}
	for $-q<t<0$.
	In particular, if $q\geq 3$ and $t=-1$, then we obtain 
	\begin{equation}\label{eq.primvolume}
		A^3=\frac{12}{(q-1)(q-2)}\left(1-\frac{ c_2(X)c_1(X)}{12q}+\sum_{Q\in \bfB_X}c_Q(-A)\right). 
	\end{equation}
	Moreover, the degree $A^3$ and the Gorenstein index $r_X$ have the following relations (see \cite[Lemma 1.2]{Suzuki2004} for example):  
	\begin{equation}\label{eq.coprimeandint}
		\gcd(r_X, q)=1\quad \text{and}\quad  r_X\cdot A^3\in \mathbb Z_{> 0}.
	\end{equation}
	
	\subsection{Fano indices}
	
	We collect in the following some basic facts about the Fano indices of terminal Fano threefolds. We refer the reader to \cite{Suzuki2004}, \cite[Lemmas 3.2 and~3.3]{Prokhorov2010} and \cite[Proposition 3.3]{Prokhorov2022} for more details.
	
	\begin{lem}
		\label{l.indexQFano}
		Let $X$ be a terminal Fano threefold.
		\begin{enumerate}
		\item\label{e.qW|qQ} We have $qW(X)\mid q\bQ(X)$.
			
			\item\label{e.qQdex} If\, $-K_X \sim_{\bQ} q A$ for some Weil divisor $A$, then $q\mid q\bQ(X)$.
			
			\item\label{e.qWinddex} If\, $-K_X\sim q B$ for some Weil divisor $B$, then $q\mid qW(X)$.
			
			\item If\, $q\bQ(X)\geq 5$, then $qW(X)=q\bQ(X)$.
			
			\item We have $qW(X),q\bQ(X)\in \{1,\dots,9,11,13,17,19\}$.
		\end{enumerate}
	\end{lem}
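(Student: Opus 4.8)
The plan is to dispatch parts (1)--(3) by an elementary Bézout argument inside the divisor class group, and to obtain (4) and (5) from Reid's orbifold Riemann--Roch formula of Section~\ref{sub.RR-formula} together with the classification results of Suzuki and Prokhorov cited above.

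First I would prove (2) as follows. Fix a Weil divisor $A_0$ with $-K_X\sim_\bQ q\bQ(X)\,A_0$ and suppose $-K_X\sim_\bQ qA$ for some Weil divisor $A$. Put $d=\gcd\bigl(q,q\bQ(X)\bigr)$ and choose $u,v\in\bZ$ with $u\,q\bQ(X)+vq=d$. Inside the $\bQ$-vector space $\Cl(X)\otimes\bQ$ the hypotheses read $q\bQ(X)[A_0]=[-K_X]=q[A]$, hence $[A_0]=\tfrac1{q\bQ(X)}[-K_X]$ and $[A]=\tfrac1q[-K_X]$, so the Weil divisor $B\coloneqq uA+vA_0$ satisfies $[B]=\bigl(\tfrac uq+\tfrac v{q\bQ(X)}\bigr)[-K_X]=\tfrac d{q\,q\bQ(X)}[-K_X]=\tfrac1{\lcm(q,q\bQ(X))}[-K_X]$, i.e.\ $-K_X\sim_\bQ\lcm\bigl(q,q\bQ(X)\bigr)\,B$. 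Since $\lcm\bigl(q,q\bQ(X)\bigr)\geq q\bQ(X)$, maximality of $q\bQ(X)$ forces $\lcm\bigl(q,q\bQ(X)\bigr)=q\bQ(X)$, that is, $q\mid q\bQ(X)$. Part (3) is the identical computation carried out in $\Cl(X)$ itself, since there all equivalences are honest linear equivalences: if $-K_X\sim q_WB_W$ with $q_W=qW(X)$ and $-K_X\sim qB$, choose $u,v$ with $uq_W+vq=d=\gcd(q,q_W)$; then $C\coloneqq uB+vB_W$ satisfies $\lcm(q,q_W)\,C\sim -K_X$ by the same arithmetic, and maximality of $q_W$ gives $q\mid q_W$. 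Part (1) then follows by applying (2) to $q=qW(X)$: indeed $-K_X\sim qW(X)\,B$ in particular yields $-K_X\sim_\bQ qW(X)\,B$.

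For (4) and (5) I would appeal to the structure theory of terminal Fano threefolds. One first notes that $\Cl(X)$ is finitely generated: $X$ is Fano, so $H^1(X,\sO_X)=0$ and $\Pic(X)$ is finitely generated, while $\Cl(X)/\Pic(X)$ is supported on the finitely many terminal singular points, each with finite local divisor class group. Granting (1), statement (4) amounts to ruling out, when $q\bQ(X)\geq 5$, that the gap between $q\bQ(X)$ and $qW(X)$ is carried by a nontrivial torsion class of $\Cl(X)$; this is exactly \cite{Suzuki2004} and \cite[Lemma 3.3]{Prokhorov2010} (and the bound $q\bQ(X)\geq 5$ is sharp, the case $q\bQ(X)=4$ being genuinely different --- see Table~\ref{table.qWnot=qQ=4}). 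For (5) one substitutes the positivity and integrality constraints $A^3>0$, $r_X\cdot A^3\in\bZ_{>0}$ and $\gcd(r_X,qW(X))=1$ from~\eqref{eq.coprimeandint} into the identity~\eqref{eq.range} and the vanishing~\eqref{eq.chita}, and runs the resulting finite analysis over the possible baskets $\bfB_X$; this excludes $q\in\{10,12,14,15,16,18\}$ and every $q\geq 20$, as in \cite[\S3]{Suzuki2004} and \cite[Lemmas 3.2 and 3.3]{Prokhorov2010}.

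The core parts (1)--(3) are pure divisor-class arithmetic and present no difficulty. The genuine obstacle is part (5): bounding the Fano index demands the delicate case analysis of the orbifold Riemann--Roch formula, exploiting positivity and integrality of $A^3$ simultaneously with the basket identity~\eqref{eq.range}; likewise (4) rests on a nontrivial control of the torsion subgroup of $\Cl(X)$ for terminal Fano threefolds. For these two I would not reprove anything, but cite \cite{Suzuki2004}, \cite[Lemmas 3.2 and 3.3]{Prokhorov2010} and \cite[Proposition 3.3]{Prokhorov2022} directly, as the paper does.
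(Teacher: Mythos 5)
Your Bézout computations for parts (1)--(3) are correct and self-contained; the paper simply cites \cite[Lemma 3.2]{Prokhorov2010} for these, so your elementary argument in $\Cl(X)$ and $\Cl(X)\otimes\bQ$ is a perfectly good (if slightly more explicit) substitute, and the maximality argument via $\lcm$ is exactly the right mechanism.

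For parts (4) and (5), however, there is a genuine gap: the lemma is stated for an \emph{arbitrary} terminal Fano threefold, whereas everything you invoke lives on a terminal $\bQ$-Fano threefold in the strict sense of this paper ($\bQ$-factorial, $\rho=1$). If $X$ is not $\bQ$-factorial, a Weil divisor $A$ with $-K_X\sim_{\bQ}q\bQ(X)A$ need not be $\bQ$-Cartier, so $A^3$, the orbifold Riemann--Roch identity~\eqref{eq.reid}, the vanishing~\eqref{eq.chita} and the integrality constraints~\eqref{eq.coprimeandint} are simply not available on $X$; likewise \cite[Proposition 3.3]{Prokhorov2022}, which is what actually yields $qW=q\bQ$ for $q\bQ\geq 5$, applies only to $\bQ$-Fano threefolds. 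The paper's proof of (4)--(5) is almost entirely devoted to bridging this: one takes a small $\bQ$-factorialization $\mu\colon X'\to X$, runs a MMP $g\colon X'\dashrightarrow X''$ to reach a Mori fibre space $X''\to Z$, shows $Z$ is a point because a positive-dimensional base would force a non-singular Fano fibre $F$ of dimension at most $2$ with $-K_F\sim_{\bQ}q\bQ(X)A''|_F$ and $A''|_F$ Cartier (impossible for $q\bQ(X)\geq 5$), and only then applies \cite[Theorem 1.4]{Prokhorov2010} and \cite[Proposition 3.3]{Prokhorov2022} to the resulting terminal $\bQ$-Fano threefold $X''$. Even then (4) does not follow for $X$ itself without the transfer step: one needs the order $m$ of $K_{X''}+q\bQ(X)A''$ to be coprime to $q\bQ(X)$ (\cite[Proposition 3.4]{Prokhorov2022}), the negativity lemma to pull the relation $m(K_{X''}+q\bQ(X)A'')\sim 0$ back to $X'$ and then push it to $X$, and finally \cite[Lemma 3.2(iv)]{Prokhorov2010} to conclude $q\bQ(X)\mid qW(X)$. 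None of this reduction or transfer appears in your proposal; your attribution of (4) to \cite{Suzuki2004} and \cite[Lemma 3.3]{Prokhorov2010} is also off the mark, since those references bound $q\bQ$ but do not assert $qW=q\bQ$. For part (5) alone you could legitimately cite \cite[Lemma 3.3]{Prokhorov2010}, which is stated for general terminal Fano threefolds and already contains this MMP reduction, but your description of running the Riemann--Roch analysis directly on $X$ would not compile into a proof in the stated generality.
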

	\begin{proof}
		The first three statements follow from \cite[Lemma 3.2]{Prokhorov2010}.
		For the last two statements, without loss of generality we may assume that $q\bQ(X)\geq 5$ and let $A$ be a $\bQ$-Cartier Weil divisor such that $-K_X\sim_{\bQ} q\bQ(X)A$. Let $\mu\colon X'\rightarrow X$ be a small $\bQ$-factorialization. Then we have $-K_{X'}\sim_{\bQ} q\bQ(X) A'$, where $A'=\mu^*A$ is a nef and big $\bQ$-Cartier Weil divisor. Running a MMP $g\colon X'\dashrightarrow X''$ yields a Mori fibre space $f\colon X''\rightarrow Z$. Denote by $A''$ the strict transform of $A'$ on $X''$. Then we obtain $-K_{X^{''}}\sim_{\bQ} q\bQ(X) A''$. If $\dim(Z)> 0$, then the general fibre $F$ of $f$ is a non-singular Fano variety such that $-K_F\sim_{\bQ} q\bQ(X) A''|_F$, which is impossible as $\dim F\leq 2$ and $A''|_F$ is Cartier. So $Z$ is a point and $X''$ is a terminal $\bQ$-Fano threefold. 
		
		By item~\eqref{e.qQdex}, we have $q\bQ(X)\mid q\bQ(X'')$, and then it follows from \cite[Theorem 1.4]{Prokhorov2010} and \cite[Proposition 3.3]{Prokhorov2022} that we must have
		\[
		q\bQ(X)=qW(X'')=q\bQ(X'')\in \{5,\dots,9,11,13,17,19\}.
		\]
		Let $m$ be the order of $K_{X''}+q\bQ(X)A''$. By \cite[Proposition 3.4]{Prokhorov2022}, we have $(m,q\bQ(X''))=1$. On the other hand, by the negativity lemma, we also have
		$		g^*(K_{X''}+q\bQ(X)A'') = K_{X'}+q\bQ(X)A'
		$.
		So we obtain $m(K_{X'}+q\bQ(X)A')\sim 0$ and then $m(K_X+q\bQ(X)A)\sim 0$. Thus it follows from \cite[Lem\-ma~3.2(iv)]{Prokhorov2010} that $qW(X)=q\bQ(X)$.
	\end{proof}

	\subsection{Algorithm and numerical types}\label{sub.ac}
	
	Let $X$ be a terminal Fano threefold. We can define a positive rational number $b_X$ as follows (see \cite[Corollary 7.3]{IwaiJiangLiu2023}):
	\[
	b_X\coloneqq \frac{c_1(X)^3}{c_2(X)c_1(X)}.
	\]
	Now with the help of a computer program or using the \textsc{Grdb} \cite{BrownKasprzyk2009}, we get the following result, which is our starting point~-- see also \cite[Proof of Theorem 4.4 and Remark 4.5]{LiuLiu2023}.

	\begin{lem}
		\label{l.Fanothreefoldscandidates}
        \label{c.QFano3foldscandidates}
		Let $X$ be a terminal $\bQ$-Fano threefold such that $q\bQ(X)\geq 3$ and $b_X\geq 121/41$.  Then $\Cl(X)$ is torsion-free, and one of the following cases holds:
		\begin{enumerate}          
			\item\label{c.r.4} $qW(X)=q\bQ(X)=4$, $\cR_X=\{7,13\}$ and $b_X=3$ $($cf. \cite[\textnumero\,41313]{BrownKasprzyk2009}$)$;
			
			\item\label{c.r.5} $qW(X)=q\bQ(X)=5$, $\cR_X=\{3,7^2\}$ and $b_X=25/8$ $($cf. \cite[\textnumero\,41436]{BrownKasprzyk2009}$)$;
			
			\item $X\cong \bP(1,2,3,5)$ and $b_X=121/41$ $($cf. \cite[\textnumero\,41510]{BrownKasprzyk2009}$)$.
		\end{enumerate}
	\end{lem}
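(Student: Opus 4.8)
The statement to be proved, Lemma~\ref{c.QFano3foldscandidates}, is essentially a finite enumeration: one must show that among all possible numerical types of terminal $\bQ$-Fano threefolds with $q\bQ(X)\geq 3$, exactly three satisfy $b_X \geq 121/41$, and that $\Cl(X)$ is then torsion-free. The plan is therefore to set up the relevant numerical constraints coming from Reid's orbifold Riemann--Roch formula, bound the relevant parameters, and run a finite search.

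\textbf{Setting up the constraints.} First I would recall that by Lemma~\ref{l.indexQFano} we have $qW(X)\mid q\bQ(X)$, and that both lie in $\{1,\dots,9,11,13,17,19\}$; combined with $q\bQ(X)\geq 3$ this already restricts to finitely many pairs $(qW,q\bQ)$. Write $q\coloneqq qW(X)$ and let $A$ be a Weil divisor with $-K_X\sim qA$. The key identities are \eqref{eq.range}, which reads $24 = c_2(X)c_1(X) + \sum_{Q}(r_Q - 1/r_Q)$, and \eqref{eq.primvolume} (valid since $q\geq 3$), expressing $A^3$ in terms of $c_2(X)c_1(X)$ and the orbifold correction terms $c_Q(-A)$. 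Since $c_1(X)^3 = (-K_X)^3 = q^3 A^3$ and $c_2(X)c_1(X) = -q\,c_2(X)\cdot K_X/q = \dots$ — more precisely $b_X = q^3 A^3 / (c_2(X)c_1(X))$ — the inequality $b_X\geq 121/41$ becomes a lower bound on $A^3$ in terms of the basket $\bfB_X$. Now \eqref{eq.range} forces $c_2(X)c_1(X) > 0$ together with $\sum_Q(r_Q - 1/r_Q) < 24$, which bounds the size of the basket: each $r_Q\geq 2$ contributes at least $3/2$, and the $r_Q$ cannot be too large. This is where I would invoke the cited external inputs: \cite[Corollary 4.3]{LiuLiu2023} and \cite[Proposition 3.2]{Prokhorov2024} presumably give sharper numerical restrictions (e.g.\ on which baskets can occur for a given $q$, or that $\Cl(X)$ is torsion-free once $b_X$ is large enough, or a lower bound forcing $qW=q\bQ$), cutting the search down decisively.

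\textbf{The finite search.} With $(q, \bfB_X)$ ranging over the finitely many surviving possibilities, $A^3$ is determined by \eqref{eq.primvolume} as a function of the unknown $c_2(X)c_1(X)$ — but \eqref{eq.range} pins $c_2(X)c_1(X)$ down exactly once $\bfB_X$ is fixed, so in fact $A^3$ is a rational number determined by $(q,\bfB_X)$. I would then impose the integrality constraint \eqref{eq.coprimeandint} ($\gcd(r_X,q)=1$ and $r_X A^3\in\bZ_{>0}$), positivity $A^3>0$, and finally the target inequality $q^3 A^3 \geq \frac{121}{41} c_2(X)c_1(X)$. This is a mechanical check over the remaining list — exactly the computation the lemma's statement attributes to a computer program or the \textsc{Grdb} — yielding precisely the three cases, with the third ($\bP(1,2,3,5)$, basket $\{2,3,5\}$ type data) identified by matching $b_X = 121/41$ and using the characterisation of weighted projective spaces among terminal $\bQ$-Fano threefolds with the given invariants. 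For the torsion-freeness of $\Cl(X)$, I expect one argues that any nontrivial torsion in $\Cl(X)$ would force, via an \'etale-in-codimension-one cover or via the refined Riemann--Roch bookkeeping, a strictly smaller value of $b_X$; alternatively this may be read off directly from the cited results once the basket and index are fixed.

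\textbf{Main obstacle.} The genuine difficulty is not any single step but the completeness and correctness of the enumeration: one must be certain that the bounds extracted from \eqref{eq.range}, \eqref{eq.primvolume}, \eqref{eq.coprimeandint} together with the cited sharper estimates really do confine $(q,\bfB_X)$ to a manageable finite list, with no case silently dropped. In particular the interplay with the external non-existence inputs is delicate, since (as the authors note) the argument is \emph{not} self-contained — one is implicitly relying on \cite{Prokhorov2024} and \cite{LiuLiu2023} to have already excluded certain baskets. So the real work behind this "Lemma" is auditing that reduction; the arithmetic verification that the three listed cases are the only survivors, and the computation of $b_X$ in each, is then routine.
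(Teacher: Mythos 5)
Your overall strategy is the one the paper actually follows: reduce to $qW(X)=q\bQ(X)$, pin down $c_2(X)c_1(X)$ and $A^3$ from the basket via \eqref{eq.range} and \eqref{eq.primvolume}, impose \eqref{eq.chita} and \eqref{eq.coprimeandint} together with the upper bounds on $b_X$ from \cite{LiuLiu2023}, and run a finite computer search. Your guesses about the roles of the external inputs are also essentially right: \cite[Proposition 3.2]{Prokhorov2024} lists the finitely many numerical types with $qW(X)\neq q\bQ(X)\geq 3$ (reproduced in Table~\ref{table.qWnot=qQ=4}), all of which have $b_X<121/41$, so one may assume $qW=q\bQ$; and torsion-freeness for the three surviving baskets is simply quoted from \cite[Proposition 2.9]{Prokhorov2010}, so no \'etale-cover argument is needed.

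There is, however, one concrete gap. You assert that the mechanical check yields ``precisely the three cases,'' but the numerical sieve you describe in fact produces a \emph{fourth} survivor: $qW(X)=q\bQ(X)=5$, $\cR_X=\{4,7\}$, $b_X=3$ (\cite[\textnumero\,41449]{BrownKasprzyk2009}). This candidate satisfies every constraint you impose --- \eqref{eq.range}, \eqref{eq.primvolume}, \eqref{eq.chita}, \eqref{eq.coprimeandint}, positivity, the bound $b_X\leq 25/8$ for $q=5$, and $b_X\geq 121/41$ --- so no amount of arithmetic auditing will remove it. Its exclusion is a genuinely geometric non-existence theorem, namely \cite[Section~7.5]{Prokhorov2013}, which the paper cites at exactly this point. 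You correctly flag that the lemma is not self-contained, but the dependence you anticipate (on \cite{Prokhorov2024} and \cite{LiuLiu2023}) is not where the real external geometric input enters; without invoking \cite{Prokhorov2013} your proof would conclude with four cases rather than three. The identification of the $q=11$ case with $\bP(1,2,3,5)$ is as you suggest: one reads off $\dim|-K_X|=23$ and applies the classification of \cite[Theorem 1.4]{Prokhorov2010}.
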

	
	\begin{proof}
		By \cite[Proposition 3.2]{Prokhorov2024} and Table~\ref{table.qWnot=qQ=4}, we may assume that $qW(X)=q\bQ(X)$. Moreover, by \cite[Corollary 4.3 and Theorem 4.4]{LiuLiu2023}, we have 
	        \begin{equation}
			\tag{$\clubsuit$} 
			\label{e.b_Xbounds}
			 b_X
			\begin{dcases}
				\leq \frac{64}{21} & \textup{if } qW(X)=4, \\[4pt]
				\leq \frac{25}{8}  & \textup{if } qW(X)=5, \\[4pt]
				<3         & \textup{otherwise}.
			\end{dcases}
		\end{equation}
        Then we use a computer program written in Python, whose algorithm is sketched as follows. 
		
		\textit{Step 1.} As $c_2(X)c_1(X)>0$ by \cite[Proposition 1]{Kawamata1992}, we can list a huge but finite number of possibilities for $\cR_X$ and $c_2(X)c_1(X)$ satisfying \eqref{eq.range}. 
		
		\textit{Step 2.} For each $q=qW(X)\geq 3$, we calculate $A^3$ by \eqref{eq.primvolume} and pick up those satisfying \eqref{eq.chita} and \eqref{eq.coprimeandint}.
		
		\textit{Step 3.} We find all candidates satisfying \eqref{e.b_Xbounds} and $b_X\geq 121/41$, which are as follows:
  
    \renewcommand*{\arraystretch}{1.6}
	\begin{longtable*}{|M{0.7cm}|M{1cm}|M{3cm}|M{2cm}|M{4cm}|}
		\hline
		
		\textnumero
        & $q$
		& $\cR_X$
		& $b_X$
        & \cite{BrownKasprzyk2009}
		\\
		\hline

        1
        & 4
		& $\{7,13\}$
		& $3$
        & \textnumero\,41313
		\\
		\hline

        2
        & 5
		& $\{3,7^2\}$
		& $\frac{25}{8}$
        & \textnumero\,41436
		\\
		\hline

        3
        & 5
		& $\{4,7\}$
		& $3$
        & \textnumero\,41449
		\\
		\hline
		
		4
        & 11
		& $\{2,3,5\}$
		& $\frac{121}{41}$
        & \textnumero\,41510
		\\
		\hline
	\end{longtable*}
  
        Case \textnumero\,3 was excluded in \cite[Section~7.5]{Prokhorov2013}.
        If the numerical type of $X$ appears as \textnumero\,4, then we have $\dim|-K_X|=23$, \textit{cf.}  \cite[\textnumero\,41510]{BrownKasprzyk2009}, and hence $X\cong \bP(1,2,3,5)$ by \cite[Theorem 1.4]{Prokhorov2010}. Finally, the torsion-freeness of $\Cl(X)$ follows from \cite[Proposition 2.9]{Prokhorov2010}.
	\end{proof}
	
	\subsection{Torsion part of $\boldsymbol{\Cl(X)}$}
	
	Let $X$ be a terminal $\bQ$-Fano threefold. If $q\bQ(X)\geq 8$, then we have $qW(X)=q\bQ(X)\geq 8$ by Lemma~\ref{l.indexQFano}, and so $\Cl(X)$ is torsion-free by \cite[Lemma 3.5]{Prokhorov2010}. On the other hand, if $5\leq q\bQ(X)\leq 7$ and the torsion subgroup $\Cl(X)_t$ of $\Cl(X)$ is non-trivial, then the possibilities for the numerical types of $X$ have been obtained in \cite[Proposition 3.4]{Prokhorov2022}, which can be refined as follows.
	
	\begin{prop}
		\label{p.torsion}
		Let $X$ be a terminal $\bQ$-Fano threefold such that $q\bQ(X)\geq 5$ and $\Cl(X)_t$ is non-trivial. Then $\Cl(X)_t$ is cyclic of order $\iota$. Moreover, let $T$ be a generator of $\Cl(X)_t$, and let $A$ be a Weil divisor such that $-K_X\sim qW(X)A$. Then one of the  cases in Table~\ref{table.qgeq5} holds. 
  
		\footnotesize
		\renewcommand*{\arraystretch}{1.6}
		\begin{longtable}
			{|M{0.3cm}|M{0.4cm}|M{0.2cm}|M{1.6cm}|M{0.4cm}|M{0.6cm}|M{0.6cm}|M{0.6cm}|M{1cm}|M{1.2cm}|M{1.2cm}|M{1.2cm}|}
			
			\caption{$q\bQ(X)\geq 5$ with $\Cl(X)_t\not=\{e\}$}
			\label{table.qgeq5}
			\\
			\hline
			
			\multirow{2}{*}{\textnumero}
			& \multirow{2}{*}{$q\bQ$}
			& \multirow{2}{*}{$\iota$}
			& \multirow{2}{*}{$\cR_X$}
			& \multicolumn{8}{c|}{$\dim$}
			\\
			\cline{5-12}
			
			&
			&
			&
			& $|A|$
			& $|2A|$ 
			& $|3A|$
			& $|4A|$
			& $|A\pm T|$
			& $|2A\pm T|$
			& $|3A\pm T|$
			& $|4A\pm T|$
			\\
			\hline
			
			1
			& $5$
			& $2$
			& $\{2,4,14\}$
			& $0$
			& $0$
			& $0$
			& $1$
			& $-1$
			& $-1$
			& $0$
			& $1$
			\\
			
			\hline
			
			2
			& $5$
			& $3$
			& $\{2,9^2\}$
			& $0$
			& $0$
			& $0$
			& $1$
			& $-1$
			& $0$
			& $1$
			& $2$
			\\		
			
			\hline

			3
			& $5$
			& $2$
			& $\{4^2,12\}$
			& $0$
			& $0$
			& $1$
			& $3$
			& $-1$
			& $0$
			& $1$
			& $3$
			\\		
			
			\hline

			4
			& $5$
			& $2$
			& $\{2^2,4,8\}$
			& $0$
			& $1$
			& $2$
			& $5$
			& $0$
			& $1$
			& $3$
			& $5$
			\\		
			
			\hline

			5
			& $5$
			& $2$
			& $\{2,4^2,6\}$
			& $0$
			& $1$
			& $3$
			& $7$
			& $0$
			& $2$
			& $4$
			& $6$
			\\		
			
			\hline

			6
			& $7$
			& $2$
			& $\{2,6,10\}$
			& $0$
			& $0$
			& $0$
			& $1$
			& $-1$
			& $0$
			& $1$
			& $2$
			\\		
			
			\hline
			
			7
			& $7$
			& $2$
			& $\{2^2,3,4,8\}$
			& $-1$
			& $0$
			& $1$
			& $2$
			& $0$
			& $0$
			& $1$
			& $2$
			\\		
			
			\hline
		\end{longtable}
		\normalsize
	\end{prop}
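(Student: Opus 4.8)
The plan is to combine the classification of \cite[Proposition 3.4]{Prokhorov2022} with an orbifold Riemann--Roch computation of the relevant linear systems, carried out largely by computer as in the proof of Lemma~\ref{c.QFano3foldscandidates}. Since $q\bQ(X)\ge 8$ forces $\Cl(X)_t=\{e\}$ by \cite[Lemma 3.5]{Prokhorov2010}, our hypothesis gives $5\le q\bQ(X)\le 7$, whence $qW(X)=q\bQ(X)$ by Lemma~\ref{l.indexQFano}, so the Weil divisor $A$ with $-K_X\sim qW(X)A$ is well defined up to a torsion class. Moreover, by \cite[Proposition 3.4]{Prokhorov2022}, $\Cl(X)_t$ is cyclic of some order $\iota$ and the triple $(q\bQ(X),\iota,\cR_X)$ ranges over an explicit finite list. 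Fixing a generator $T$ of $\Cl(X)_t$, what remains is to pin down the dimensions $\dim|tA+kT|$ for $t\in\{1,2,3,4\}$ and $k\in\{0,\pm1\}$ for each numerical type on that list, discard the types for which these are inconsistent, and display the survivors as Table~\ref{table.qgeq5}.

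The computation of each $\dim|tA+kT|$ would go as follows. For $t\ge 1$ and any torsion class $T'$ one has $tA+T'\sim K_X+L$ with $L\coloneqq(qW(X)+t)A+T'$, and $L$ is an ample $\bQ$-Cartier divisor: it is $\bQ$-Cartier because $A$ is and $\iota T'\sim 0$, and it is ample because $\rho(X)=1$ and $L$ is numerically equivalent to the ample divisor $(qW(X)+t)A$. As $X$ is klt, the Kawamata--Viehweg vanishing theorem gives $h^i(X,tA+T')=0$ for $i>0$, so $\dim|tA+T'|=\chi(X,tA+T')-1$, read as $-1$ when $\chi=0$. The Euler characteristic is then supplied by the orbifold Riemann--Roch formula: since $tA+T'$ is numerically $tA$, its polynomial part equals that of $\chi(tA)$ in \eqref{eq.reid}, and only the basket term changes --- at an orbifold point $Q$ of type $\tfrac1{r_Q}(1,-1,b_Q)$ it is the same expression $c_Q(\cdot)$ as in \eqref{eq.reid}, but with $i_{Q,t}$ replaced by the residue modulo $r_Q$ of $t\,i_{Q,1}+\tau_Q$, where $qW(X)\,i_{Q,1}\equiv-1\pmod{r_Q}$ and $\tau_Q\in\bZ/r_Q\bZ$ records the local class of $T$ at $Q$.

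With this in hand, the enumeration step runs over the finitely many triples $(q\bQ(X),\iota,\cR_X)$ from \cite[Proposition 3.4]{Prokhorov2022} and, for each, over the tuples $(\tau_Q)_Q$ of local torsion data subject to the compatibility constraints ($\iota\tau_Q\equiv0\pmod{r_Q}$ for every $Q$, together with the condition that the global order of $T$ be exactly $\iota$); for each configuration one computes the eight Euler characteristics $\chi(tA)$ and $\chi(tA\pm T)$, $t=1,\dots,4$, using the value of $A^3$ from \eqref{eq.primvolume}, and keeps only the configurations whose output is internally consistent and consistent with \eqref{eq.range}, \eqref{eq.chita} and \eqref{eq.coprimeandint}. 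The configurations that survive, with their dimension vectors adjoined, should be exactly the seven rows of Table~\ref{table.qgeq5}; the pair $|tA\pm T|$ is recorded because the data only determines $T$ up to sign.

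The main obstacle is the bookkeeping of the last step rather than any conceptual point: one must make sure that no admissible tuple $(\tau_Q)_Q$ is overlooked, that the consistency tests used to eliminate tuples are valid, and that the ambiguity $T\leftrightarrow-T$ is handled uniformly across the columns. As with Lemma~\ref{c.QFano3foldscandidates}, this is safest done by a short computer search; the role of the vanishing theorem above is precisely to ensure that the numbers it produces are honest dimensions of linear systems and not merely Euler characteristics.
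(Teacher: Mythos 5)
Your outline reproduces the paper's starting point (the finite list of triples from \cite[Proposition 3.4]{Prokhorov2022}) and its computational engine (orbifold Riemann--Roch plus Kawamata--Viehweg vanishing to turn Euler characteristics into honest dimensions of $|tA+kT|$), but it is missing the one step that actually shrinks Prokhorov's list down to the seven rows of Table~\ref{table.qgeq5}. Two of the cases in \cite[Proposition 3.4]{Prokhorov2022} (\textnumero\,4 and \textnumero\,5 there, corresponding to \cite[\textnumero\,41424 and \textnumero\,41431]{BrownKasprzyk2009}) are perfectly consistent with all of the Riemann--Roch constraints \eqref{eq.range}, \eqref{eq.chita} and \eqref{eq.coprimeandint} --- they appear as genuine numerical candidates in the \textsc{Grdb} --- so the ``internal consistency'' tests you propose will never discard them, and your enumeration would terminate with nine rows rather than seven. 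The paper eliminates these two cases by computing $b_X=125/37$ and $b_X=5$ respectively and invoking the Kawamata--Miyaoka-type inequality of Theorem~\ref{t.LiuLiu3folds}, \emph{i.e.} $c_1(X)^3\leq \tfrac{25}{8}c_2(X)c_1(X)$, which both values violate. Without importing that external geometric input (or some substitute for it), the statement ``one of the cases in Table~\ref{table.qgeq5} holds'' is not established.

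A secondary, less serious point: your plan re-enumerates the local torsion data $(\tau_Q)_Q$ from scratch, subject only to $\iota\tau_Q\equiv 0\pmod{r_Q}$ and the global-order condition. The paper instead reads the dimensions directly off the numerical data already recorded in \cite[Proposition 3.4]{Prokhorov2022}, which pins down the torsion behaviour at each basket point; your version is workable but does more bookkeeping than necessary and places the burden of correctness on a search whose admissible set you would have to justify independently. You should also note, as the paper does, that the sign ambiguity is harmless because $kA-T\sim kA+T$ when $\iota=2$ and $kA-T\sim kA+2T$ when $\iota=3$, which is why a single column $|kA\pm T|$ suffices.
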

	
	\begin{proof}
	  Note that we have $b_X=125/37$ and $b_X=5$ in cases \textnumero\,4 and \textnumero\,5 of \cite[Proposition 3.4]{Prokhorov2022} (see also \cite[\textnumero\,41424 and \textnumero\,41431]{BrownKasprzyk2009}), respectively, which are thus ruled out by Theorem~\ref{t.LiuLiu3folds}. The dimensions of $|kA|$ and $|kA\pm T|$ can be derived from the orbifold Riemann--Roch formula (see \cite[Corollary~10.3]{Reid1987} or \cite[(2.3)]{LiuLiu2023}) using the numerical data given in \cite[Proposition 3.4]{Prokhorov2022}. Here we note that $kA-T\sim kA+T$ if $\iota=2$ and $kA-T\sim kA+2 T$ if $\iota=3$.
	\end{proof}

   The following result was pointed out to us by the ‌anonymous referee.  It is obtained by a computer search using an algorithm outlined in \cite[Appendix A]{Prokhorov2024}; see also \cite[Section~3]{Prokhorov2024} for related results.
 
	\begin{lem}
		\label{l.q=3dimensionofA}
		Let $X$ be a terminal $\bQ$-Fano threefold with $q\bQ(X)=3$ such that $\Cl(X)$ is not torsion-free. Let $A$ be any Weil divisor on $X$ such that $-K_X\sim_{\bQ} 3A$. Then $\dim|A|\leq 1$.
	\end{lem}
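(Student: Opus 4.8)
The plan is to reduce the statement to a finite numerical computation via Reid's orbifold Riemann--Roch formula, exactly in the spirit of the computer search behind Lemma~\ref{c.QFano3foldscandidates}, but now with the extra input that $\Cl(X)$ has non-trivial torsion. First I would record the basic structural constraints: since $q\bQ(X)=3$, Lemma~\ref{l.indexQFano}\eqref{e.qW|qQ}--\eqref{e.qWinddex} forces $qW(X)\in\{1,3\}$, and in either case there is a Weil divisor $A$ with $-K_X\sim_\bQ 3A$; let $T$ be a generator of $\Cl(X)_t$, which (by the cited results of Prokhorov, \cite[Proposition 2.9]{Prokhorov2010} and \cite[Proposition 3.4]{Prokhorov2022}, or the algorithm of \cite[Appendix A]{Prokhorov2024}) is cyclic of some order $\iota>1$ with a controlled list of possible numerical baskets $\cR_X$. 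The key observation is that $\chi(A)=h^0(A)$: indeed $A$ is nef (being a positive rational multiple of the ample $-K_X$ modulo torsion, after passing to a small $\bQ$-factorialization one sees $A$ is nef and big, hence $h^i(A)=0$ for $i>0$ by Kawamata--Viehweg, and the Euler characteristic is a birational invariant here since $A$ is Weil $\bQ$-Cartier); so it suffices to bound $\chi(A)$, and this is computed purely from $\cR_X$, $A^3$ and $c_2(X)c_1(X)$ through \eqref{eq.reid}.

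Next I would assemble the numerical data. Using \eqref{eq.range} one enumerates the finitely many admissible pairs $(\cR_X, c_2(X)c_1(X))$; for each, \eqref{eq.primvolume} (valid since $q=3\geq 3$) gives $A^3$ — with the caveat that when $qW(X)=3$ one instead has $-K_X\sim 3A_W$ for an actual Weil divisor $A_W$ and must be slightly careful, but \eqref{eq.chita} at $t=-1$ and $t=-2$ still pins down $A^3$ and eliminates most baskets via \eqref{eq.coprimeandint}. Intersecting this list with the torsion-type constraints from \cite[Proposition 3.4]{Prokhorov2022} (reproduced for $q\bQ\geq 5$ in Table~\ref{table.qgeq5}, with the analogous $q\bQ=3$ list coming from \cite[Appendix A]{Prokhorov2024}) leaves only a handful of candidate numerical types. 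For each surviving candidate one then evaluates $\chi(A)$ and $\chi(A\pm T)$ directly from Reid's formula, using $qi_{Q,t}\equiv -t$ to compute the local contributions $c_Q(A)$ orbifold point by orbifold point; the claim $\dim|A|\leq 1$ is then the assertion that $\chi(A)=h^0(A)\leq 2$ in every case, which one verifies by inspection of the finite table.

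The main obstacle, as usual in this circle of ideas, is not any single deep step but rather making the finiteness genuinely effective and complete: one must be sure the enumeration in Step~1 is bounded (this uses $c_2(X)c_1(X)>0$ from \cite[Proposition 1]{Kawamata1992} together with \eqref{eq.range}, which forces each $r_Q$ and the number $s$ of orbifold points to be bounded), and one must correctly handle the distinction between the two values of $qW(X)$ and the resulting integrality conditions on $A^3$ — this is precisely the point where \cite[Appendix A]{Prokhorov2024} does the bookkeeping. A secondary subtlety is justifying $h^i(A)=0$ for $i>0$: for this I would pass to a small $\bQ$-factorialization $\mu\colon X'\to X$, note $\mu^*A$ is nef and big with $\mu_*\sO_{X'}(\mu^*A)=\sO_X(A)$ and $R^i\mu_*\sO_{X'}(\mu^*A)=0$ by relative Kawamata--Viehweg (or because $\mu$ is small), so that $H^i(X,\sO_X(A))=H^i(X',\sO_{X'}(\mu^*A))=0$ for $i>0$ by Kawamata--Viehweg on $X'$, whence $\dim|A|=\chi(A)-1$. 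Granting that, the proof is a finite check; since the referee supplied the statement as the output of exactly such a search, I expect the candidate list to collapse quickly.
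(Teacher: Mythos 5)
Your proposal matches the paper's approach: the paper gives no argument beyond stating that the lemma "is obtained by a computer search using an algorithm outlined in \cite[Appendix A]{Prokhorov2024}", and your sketch (Kawamata--Viehweg vanishing to identify $\dim|A|$ with $\chi(A)-1$, then a finite enumeration of baskets and torsion types via Reid's orbifold Riemann--Roch, with the torsion bookkeeping delegated to Prokhorov's algorithm) is precisely what that search does. The only point worth flagging is that in the presence of torsion the local contributions $c_Q(A)$ are not determined by $\cR_X$ alone but also by the local class of the torsion divisor at each orbifold point, which you acknowledge but do not spell out; this is exactly the part carried by \cite[Appendix A]{Prokhorov2024}.
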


	\section{Case \texorpdfstring{$\boldsymbol{q\bQ(X)=5}$}{qQ(X)=5}}
	\label{s.qbQ=5}
	
	The goal of this section is to prove the following result.
	
	\begin{thm}
		\label{t.qQ=5}
		Let $X$ be a terminal $\bQ$-Fano threefold such that $q\bQ(X)=5$. Then we have
		\[
		c_1(X)^3<\frac{121}{41} c_2(X)c_1(X).
		\]
	\end{thm}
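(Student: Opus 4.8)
\textbf{Proof proposal for Theorem~\ref{t.qQ=5}.}

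The plan is to reduce Theorem~\ref{t.qQ=5} to ruling out a single numerical type and then apply a foliation-theoretic argument. First I would invoke Lemma~\ref{c.QFano3foldscandidates}: if $X$ is a terminal $\bQ$-Fano threefold with $q\bQ(X)=5$ and $b_X = c_1(X)^3/(c_2(X)c_1(X)) \geq 121/41$, then by that lemma the only possibility is case~\eqref{c.r.5}, namely $qW(X)=q\bQ(X)=5$, $\cR_X=\{3,7^2\}$ and $b_X=25/8$ (the cases $\textnumero\,41313$ and $\bP(1,2,3,5)$ have $q\bQ\neq 5$, and $\textnumero\,41449$ with $\cR_X=\{4,7\}$ was already excluded in \cite{Prokhorov2013}). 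So it suffices to show that no terminal $\bQ$-Fano threefold with $q\bQ(X)=5$ and $\cR_X=\{3,7^2\}$ exists. Along the way I would need the torsion statement: by Proposition~\ref{p.torsion}, the basket $\{3,7^2\}$ does not appear in Table~\ref{table.qgeq5}, so $\Cl(X)$ is torsion-free, and hence $-K_X \sim 5A$ for a genuine Weil divisor $A$ generating $\Cl(X)$.

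Next I would extract the relevant numerical invariants from the orbifold Riemann--Roch formula. From \eqref{eq.range} with $\cR_X=\{3,7^2\}$ one gets $c_2(X)c_1(X) = 24 - (3 - 1/3) - 2(7 - 1/7) = 24 - 8/3 - 96/7$, and then \eqref{eq.primvolume} with $q=5$ gives $A^3 = \frac{12}{12}\big(1 - \frac{c_2(X)c_1(X)}{60} + \sum_Q c_Q(-A)\big)$; the computation should yield $A^3$ with $\gcd(r_X,5)=1$ (here $r_X = \lcm(3,7)=21$) and $21\cdot A^3 \in \bZ_{>0}$ as in \eqref{eq.coprimeandint}, consistent with $b_X = (-K_X)^3/(c_2 c_1) = 125 A^3/(c_2 c_1) = 25/8$. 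More importantly, I would compute $\dim|A|$ and $\dim|2A|$ via \eqref{eq.reid}; the expectation (matching the \textsc{Grdb} entry) is that $|A|$ is small, giving a low-dimensional linear system to work with, and in particular that $|-K_X| = |5A|$ and the various $|kA|$ for small $k$ are pinned down.

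The heart of the argument is the foliation-theoretic exclusion, following the strategy the introduction attributes to Section~\ref{s.qbQ=5}. The idea is: the $\bQ$-Fano threefold $X$ with $\cR_X = \{3, 7^2\}$ carries only three orbifold points, of types $\tfrac13(1,-1,b)$ and (twice) $\tfrac17(1,-1,b')$, and one estimates the discrepancy-type contributions using a rank-one or rank-two foliation (e.g. one associated to a pencil in $|A|$ or $|2A|$, if nonempty, or to the MMP on a resolution) and derives a contradiction with the Bogomolov--McQuillan-type or Miyaoka-type bound for foliations on the minimal resolution, exploiting that the singularities are terminal (hence isolated cyclic quotient) and that the index $q=5$ forces $A$ to be "large" relative to $-K_X$. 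Concretely I would look for a foliation $\sF$ on $X$ whose canonical class is a small multiple of $A$, compute $K_\sF\cdot(-K_X)^2$ or $c_1(\sF)^2\cdot(-K_X)$ on a resolution, and compare with the Chern-class data above; the sign constraints should be violated. The main obstacle I anticipate is precisely constructing this foliation and controlling its singularities along the quotient points — making the foliation version of Miyaoka's inequality (or an Euler-characteristic/Bott-vanishing obstruction) bite requires knowing enough about the generators of $\Cl(X)$ and the linear systems $|kA|$, which is where the orbifold Riemann--Roch dimension count from the previous paragraph is essential. Once the single case $\cR_X=\{3,7^2\}$ is out, Theorem~\ref{t.qQ=5} follows immediately, since every other terminal $\bQ$-Fano threefold with $q\bQ(X)=5$ automatically has $b_X < 121/41$ by Lemma~\ref{c.QFano3foldscandidates}.
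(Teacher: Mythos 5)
Your reduction step is correct and matches the paper: Lemma~\ref{c.QFano3foldscandidates} leaves only the type $qW(X)=q\bQ(X)=5$, $\cR_X=\{3,7^2\}$, $b_X=25/8$, so the theorem amounts to excluding that one case. But the exclusion is where all the content lies, and your sketch of it has a genuine gap: you never actually produce the foliation, and you say yourself that constructing it is "the main obstacle." The candidates you propose (a pencil in $|A|$ or $|2A|$, an MMP on a resolution, a Miyaoka-type bound on the minimal resolution) are not what makes the argument work and would not obviously yield anything — $|A|$ may well be empty, and there is no reason a pencil-induced foliation would have a canonical class you can control. The dimension counts for $|kA|$ that you emphasize in your second paragraph are not needed for this case at all.

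The mechanism the paper uses is different and is forced by the numerics rather than constructed by hand: since $b_X=25/8>3$, the Bogomolov--Gieseker inequality (together with generic ampleness of $\sT_X$) shows $\sT_X$ is \emph{not} semistable, and the earlier results of \cite{LiuLiu2023} pin down the maximal destabilizing subsheaf $\sF\subset\sT_X$ as having rank two with $c_1(\sF)\equiv 4A$. By Campana--P\u{a}un, this $\sF$ is automatically an algebraically integrable codimension-one foliation. The key geometric input (Lemma~\ref{l.singularlocus}) is that on a $\bQ$-factorial variety with $\rho=1$ such a foliation must have singular locus of codimension exactly two: the closures of two general leaves are ample divisors, hence meet, and uniqueness of leaves at a common regular point gives a contradiction if $\sF$ were regular in codimension two. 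Consequently $c_2(\sT_X/\sF)$ is a nonzero effective cycle, and the Whitney formula plus Bogomolov--Gieseker for $\sF$ yield the strict inequality of Proposition~\ref{p.ineqforn-1}, which for $n=3$, $c_1(\sF)\equiv 4A$, $c_1(X)\equiv 5A$ reads $c_2(X)c_1(X)>8A^2c_1(X)=\tfrac{8}{25}c_1(X)^3$, contradicting $b_X=25/8$. None of these steps — the instability of $\sT_X$, the identification of $\sF$ as the destabilizer with known rank and $c_1$, the codimension-two singular locus, and the resulting Chern class inequality — appears in your proposal, so as written it does not constitute a proof.
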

	
	Assume to the contrary that there exists a terminal $\bQ$-Fano threefold $X$ such that $q\bQ(X)=5$ and $b_X\geq 121/41$. 
	Then Lemma~\ref{c.QFano3foldscandidates} shows that the numerical type of $X$ appears as~\eqref{c.r.5}. In particular, the numerical invariants of $X$ are as follows (\textit{cf.} \cite[\textnumero\,41436]{BrownKasprzyk2009}):
	\[
	\bfB_X=\{(3,1),(7,2),(7,3)\},\quad c_1(X)^3=\frac{500}{21},\quad c_2(X)c_1(X)=\frac{160}{21} \quad \textup{and}\quad b_X=\frac{25}{8}.
	\]
	In this case, the tangent sheaf $\sT_X$ is not semi-stable by the Bogomolov--Gieseker inequality, and it follows from \cite[Theorem 4.2 and Corollary 4.3]{LiuLiu2023} that the maximal destabilizing subsheaf $\sF$ of $\sT_X$ is of rank two with $c_1(\sF)\equiv 4A$, where $A$ is a generator of $\Cl(X)/\sim_{\bQ}$.
	
	\subsection{Foliation}
	
	Let $X$ be a normal variety. A \emph{foliation} on $X$ is a non-zero coherent subsheaf $\sF$ of the tangent sheaf $\sT_X$ such that
	\begin{enumerate}
	\item $\sF$ is saturated in $\sT_X$ (\textit{i.e.} 
          $\sT_X/\sF$ is torsion-free), 
		
		\item $\sF$ is closed under the Lie bracket.
	\end{enumerate}
	The \emph{rank} of $\sF$ is defined to be the generic rank $r$ of $\sF$,  and the \emph{codimension} of $\sF$ is defined as $\dim X -r$.
	
	Given a rank $r$ foliation $\sF$ on a normal variety $X$, the inclusion $\sF\hookrightarrow \sT_X$ induces a non-zero map
	\[
	\eta\colon \Omega^r_X \longrightarrow \wedge^r \sT_X^* \longrightarrow \wedge^r \sF^* \longrightarrow \det(\sF^*).
	\]
	The \emph{singular locus} of $\sF$ is the singular scheme of the map $\eta$; \textit{i.e.} it is the closed subscheme of $X$ whose ideal sheaf is the image of the induced map 
	\[
	(\Omega_X^r\otimes \det(\sF))^{**}\longrightarrow \sO_X.
	\]
	A point $x\in X$ is called a \emph{regular point} of $\sF$ if $x$ is not contained in the singular locus of $\sF$. If $x$ is a regular point of $X$, then $\sF$ is regular at $x$ if and only if $\sT_X/\sF$ is locally free at $x$. In particular, the singular locus of $\sF$ has codimension at least two. Moreover, if both $X$ and $\sF$ are regular at $x$, then by the holomorphic Frobenius theorem, there exists a unique leaf of $\sF$ passing through $x$.
	
	A foliation $\sF$ is called \emph{algebraically integrable} if there exists a dominant rational map $\varphi\colon X\dashrightarrow Y$ to a normal variety $Y$ such that $\sF=\ker(\sT_X\rightarrow \varphi^*\sT_Y)$.
	
	\begin{lem}
		\label{l.singularlocus}
		Let $X$ be a normal projective variety of dimension $n$ which is regular in codimension two. Let $\sF$ be a codimension one algebraically integrable foliation on $X$. If $X$ is $\bQ$-factorial and $\rho(X)=1$, then the singular locus of $\sF$ has codimension two.
	\end{lem}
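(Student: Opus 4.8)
The plan is to argue by contradiction: suppose the singular locus $S$ of $\sF$ has codimension $\geq 3$. Since $\sF$ is a codimension one foliation on the $n$-dimensional variety $X$, it has rank $n-1$, and the induced map $\eta\colon \Omega^{n-1}_X \to \det(\sF^*)$ has singular scheme $S$. Equivalently, the dual map
\[
(\Omega^{n-1}_X \otimes \det(\sF))^{**} \longrightarrow \sO_X
\]
has image the ideal sheaf $\sI_S$. Note that $(\Omega^{n-1}_X \otimes \det(\sF))^{**} \cong (\omega_X \otimes \sT_X \otimes \det(\sF))^{**}$ using $\Omega^{n-1}_X \cong (\omega_X \otimes \sT_X)^{**}$ in codimension $\geq 2$; more usefully, the normal sheaf sequence for the foliation gives, away from a codimension $\geq 2$ set, that $\det(\sT_X/\sF) \cong \det(\sT_X)\otimes\det(\sF^*)$ is a rank one reflexive sheaf, i.e. a Weil divisor class $N$ with $K_X + K_\sF \sim -N$ where $K_\sF = \det(\sF^*)$. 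The key point I would extract is that $\eta$ realizes $\sO_X(N)^{**}$ (the canonical class of the normal "line bundle") together with $\sI_S$, so that $\sI_S \otimes \sO_X(N)$ is globally generated where $X$ is smooth, and more importantly $N$ is effective: the quotient $\det(\sT_X/\sF)$ twisted appropriately produces a nonzero section of $\sO_X(N)$ vanishing exactly on $S$.

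Here is where $\bQ$-factoriality and $\rho(X)=1$ enter. Since $\rho(X)=1$, the divisor class $N$ is either ample, numerically trivial, or anti-ample (up to $\bQ$-linear equivalence). If $N$ were numerically trivial or anti-ample, then the nonzero section of $\sO_X(N)^{**}$ produced above would have to vanish on all of $X$ unless $N \equiv 0$ and the section is nowhere zero — but nowhere vanishing would force $S = \emptyset$, contradicting that $\sF$ is genuinely singular in codimension $\geq 2$ generically... actually the cleaner route: the section's zero locus is $S$, which has codimension $\geq 3 > 1$, so the section cannot define a divisor, hence $\sO_X(N)^{**} \cong \sO_X$ and the section is a unit, forcing $S = \emptyset$. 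But for an algebraically integrable foliation coming from $\varphi\colon X \dashrightarrow Y$, the singular locus always contains the indeterminacy locus of $\varphi$ together with the critical locus, and since $\rho(X) = 1$ the map $\varphi$ cannot be a morphism to a positive-dimensional $Y$ (else $X$ would carry two distinct contractions, or $-K_X$ ample would be pulled back, etc.) — so $\varphi$ is genuinely rational, $\sF$ is genuinely singular, and $S \neq \emptyset$. This contradiction shows $N \equiv 0$ is impossible too, so $N$ must be ample.

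So the remaining case is $N$ ample, and here I would derive a contradiction from dimension count / positivity. With $N = -(K_X + K_\sF)$ ample and $S$ the zero scheme of a section of the reflexive sheaf $\sO_X(N)$: if $\codim S \geq 3$, then $S$ cannot be the scheme-theoretic zero locus of a single section of a rank one sheaf on a variety with $\rho = 1$ unless that section is identically zero (a nonzero section of a rank one reflexive sheaf vanishes in pure codimension $1$, or is nowhere zero). A nonzero section vanishing in codimension $\geq 3$ is impossible; a nowhere-zero section forces $\sO_X(N) \cong \sO_X$, contradicting $N$ ample. Hence $\codim S \leq 2$; combined with the general fact that the singular locus of any foliation has codimension $\geq 2$, we conclude $\codim S = 2$ exactly. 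The main obstacle I anticipate is making precise the identification of $\sI_S$ with the image of the map into $\sO_X(N)$ and checking that the resulting section is genuinely nonzero on a $\bQ$-factorial (not smooth) variety — one must work on the smooth locus $X_{\mathrm{reg}}$, whose complement has codimension $\geq 3$ by hypothesis (and in any case $\geq 2$), extend reflexively, and verify that purity of vanishing loci of sections of rank one reflexive sheaves survives; this uses exactly that $X$ is regular in codimension two so that $S \cap X_{\mathrm{reg}}$ is dense in $S$ and the codimension is computed correctly.
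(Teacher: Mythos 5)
Your argument has a genuine gap at its central step. You treat the singular locus $S$ of $\sF$ as the scheme-theoretic zero locus of a section of a \emph{rank one} reflexive sheaf $\sO_X(N)$ with $N=-(K_X+K_\sF)$, and then invoke the dichotomy ``a nonzero section of a rank one reflexive sheaf vanishes in pure codimension one or is nowhere zero.'' But $S$ is not the zero locus of a section of a rank one sheaf: for a codimension one foliation, $S$ is the zero \emph{scheme} of a twisted $1$-form $\omega\in H^0\bigl(X,(\Omega^1_X\otimes N_\sF)^{**}\bigr)$, equivalently of the map $\eta$ from the rank $n$ sheaf $(\Omega^{n-1}_X\otimes\det\sF)^{**}$ to $\sO_X$; its ideal is generated by the $n$ coefficient functions of $\omega$, and such a common zero locus routinely has codimension $\geq 2$. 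Indeed, if your dichotomy applied, it would rule out $\codim S=2$ just as well as $\codim S\geq 3$, contradicting the standard fact (stated in the paper) that the singular locus of any foliation has codimension at least two. So the positivity of $N$ gives you no control here, and the contradiction you derive in the ample case evaporates. The side argument that $\varphi$ cannot be a morphism to a curve (since its fibres would be non-ample effective divisors on a $\bQ$-factorial variety with $\rho=1$) is correct, but by itself it only suggests $S\neq\emptyset$, not that $\codim S=2$.

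For comparison, the paper's proof uses algebraic integrability in an essential and more elementary way: the foliation is induced by a rational map $\varphi\colon X\dashrightarrow Y$ to a curve; the closures $F,F'$ of two general fibres are ample (by $\bQ$-factoriality and $\rho(X)=1$), hence $\dim(F\cap F')=n-2$; if $\sF$ were regular in codimension two, one could choose $x\in F\cap F'$ at which both $X$ and $\sF$ are regular, and then $F$ and $F'$ would be two distinct leaves through $x$, contradicting the uniqueness of leaves from the holomorphic Frobenius theorem. Your correct observation about the fibres being ample is exactly the seed of this argument; the missing idea is to intersect two of them and use uniqueness of leaves, rather than to seek a positivity obstruction on $N_\sF$.
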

	
	\begin{proof}
		Assume to the contrary that $\sF$ is regular in codimension two. By assumption, the foliation $\sF$ is induced by a rational map $\varphi\colon X\dashrightarrow Y$ to a curve. Let $F$ and $F'$ be the closures of two general fibres of $\varphi$. Since $X$ is $\bQ$-factorial and $\rho(X)=1$, the divisors $F$ and $F'$ are ample and $\dim(F\cap F')=n-2$. Since both of $X$ and $\sF$ are regular in codimension two, there exists a point $x\in F\cap F'$ such that $X$ and $\sF$ are both regular at $x$. In particular, both of $F$ and $F'$ are the leaves of $\sF$ at $x$, which contradicts the uniqueness of leaves.\looseness=-1
	\end{proof}
	
	\subsection{Proof of Theorem~\ref{t.qQ=5}}
	
	Let $X$ be a terminal $\bQ$-Fano variety of dimension $n$.
	Denote by $r$ the rank of the maximal destabilizing subsheaf $\sF$ of $\sT_X$ and by $k$ the length of the Harder--Narasimhan filtration of $\sT_X$. Note that $\sT_X$ is semi-stable if and only if $(r,k)=(n,1)$. In this case, the Bogomolov--Gieseker inequality and \cite[Theorem 1.2]{GrebKebekusPeternell2021} imply
	\[
	c_1(X)^n <3 c_2(X)c_1(X)^{n-2}.
	\]
	In \cite[Proposition 3.8]{LiuLiu2023}, the authors studied the case $(r,k)=(1,k)$ for $k\geq 2$. Now we study the case $(r,k)=(n-1,2)$ and prove the following result.
	
	\begin{prop}
		\label{p.ineqforn-1}
		Let $X$ be a terminal $\bQ$-Fano variety of dimension $n$. Assume that the maximal destabilizing subsheaf $\sF$ of $\sT_X$ has rank $n-1$. Then we have
		\[
		c_2(X)c_1(X)^{n-2} > \frac{(2(n-1)c_1(X)-nc_1(\sF)) c_1(\sF)}{2(n-1)} c_1(X)^{n-2}.
		\]
	\end{prop}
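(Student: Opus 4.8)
The plan is to extract numerical information from the Harder--Narasimhan structure of $\sT_X$ together with the Bogomolov--Gieseker inequality applied to the destabilizing subsheaf $\sF$. Write $\sG \coloneqq \sT_X/\sF$, which by hypothesis is a torsion-free rank-one sheaf, so after reflexivization $\sG^{**}$ is a rank-one reflexive sheaf and hence (on a $\bQ$-factorial variety) $\bQ$-Cartier; set $c_1(\sG) \coloneqq c_1(\sG^{**})$. From the exact sequence $0 \to \sF \to \sT_X \to \sG \to 0$ one gets $c_1(\sF) + c_1(\sG) = c_1(X)$. Since $\sF$ is the \emph{maximal} destabilizing subsheaf, the slope inequality $\mu(\sF) > \mu(\sT_X)$ with respect to $c_1(X)$ holds; concretely $\frac{c_1(\sF)c_1(X)^{n-1}}{n-1} > \frac{c_1(X)^n}{n}$, so $c_1(\sF)c_1(X)^{n-1} > \frac{n-1}{n}c_1(X)^n$, equivalently $n\,c_1(\sF)c_1(X)^{n-1} > (n-1)c_1(X)^n$. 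This will be the source of the factor $2(n-1)c_1(X) - n\,c_1(\sF)$ appearing in the statement.

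**The key steps**, in order: First, apply the Bogomolov--Gieseker inequality to the (semistable, since it is the first HN piece) rank-$(n-1)$ sheaf $\sF$ on the terminal $\bQ$-Fano $X$; in the form needed this reads
\[
\bigl(2(n-1)c_2(\sF) - (n-2)c_1(\sF)^2\bigr)c_1(X)^{n-2} \geq 0,
\]
so $2(n-1)c_2(\sF)c_1(X)^{n-2} \geq (n-2)c_1(\sF)^2 c_1(X)^{n-2}$. Second, relate $c_2(\sF)$ to $c_2(X)$: from the exact sequence $0 \to \sF \to \sT_X \to \sG \to 0$ and Whitney's formula (valid up to codimension-two corrections, which vanish against $c_1(X)^{n-2}$ since $X$ is smooth in codimension two — this is exactly why that hypothesis is in the ambient setup), we get $c_2(X) = c_2(\sF) + c_1(\sF)c_1(\sG) = c_2(\sF) + c_1(\sF)(c_1(X) - c_1(\sF))$ as a numerical identity after capping with $c_1(X)^{n-2}$. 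Third, substitute: $c_2(X)c_1(X)^{n-2} = c_2(\sF)c_1(X)^{n-2} + c_1(\sF)(c_1(X)-c_1(\sF))c_1(X)^{n-2}$, and bound $c_2(\sF)c_1(X)^{n-2} \geq \frac{n-2}{2(n-1)}c_1(\sF)^2 c_1(X)^{n-2}$ from step one. Fourth, combine with the slope bound $c_1(\sF)c_1(X)^{n-1} > \frac{n-1}{n}c_1(X)^n$ and do the algebra: the desired inequality is $c_2(X)c_1(X)^{n-2} > \frac{1}{2(n-1)}(2(n-1)c_1(X) - n\,c_1(\sF))c_1(\sF)c_1(X)^{n-2}$, which after clearing denominators becomes $2(n-1)c_2(X)c_1(X)^{n-2} > 2(n-1)c_1(\sF)c_1(X)^{n-1} - n\,c_1(\sF)^2c_1(X)^{n-2}$. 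Plugging in steps two and three, the left side becomes $\geq (n-2)c_1(\sF)^2c_1(X)^{n-2} + 2(n-1)c_1(\sF)(c_1(X)-c_1(\sF))c_1(X)^{n-2} = 2(n-1)c_1(\sF)c_1(X)^{n-1} - n\,c_1(\sF)^2c_1(X)^{n-2}$, and the \emph{strict} inequality is then inherited from the strict slope bound — one has to be a little careful to see where strictness enters, but since $c_1(\sF)c_1(X)^{n-1} > \frac{n-1}{n}c_1(X)^n > 0$ and $c_1(X)$ is ample, the term $2(n-1)c_1(\sF)c_1(X)^{n-1}$ can be bounded below strictly, which suffices.

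**The main obstacle** I anticipate is purely bookkeeping: making the Chern-class identities precise for reflexive, not locally free, sheaves on a singular variety, and checking that all the codimension-two discrepancies genuinely die after intersecting with $c_1(X)^{n-2}$ — this is where the "regular in codimension two" hypothesis (implicit for terminal threefolds, and stated explicitly in Lemma~\ref{l.singularlocus}) gets used, and where one must invoke the versions of the Bogomolov--Gieseker inequality and Whitney's formula appropriate to the $\bQ$-Fano setting (as in the references already cited, e.g. \cite[Theorem 1.2]{GrebKebekusPeternell2021} and \cite{LiuLiu2023}). A secondary subtlety is confirming that the first piece $\sF$ of the Harder--Narasimhan filtration is genuinely semistable (true by definition of the HN filtration) so that Bogomolov--Gieseker applies to it directly; no MMP or foliation input is needed for this proposition, unlike the subsequent application to the $q\bQ=5$ case. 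Once these foundational points are in place, the remaining computation is the short algebraic manipulation sketched above.
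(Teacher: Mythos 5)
There is a genuine gap, and it is exactly at the point you flag as needing care: the strictness. Your chain of estimates is (Whitney with the $c_2$ of the quotient dropped) $+$ (Bogomolov--Gieseker for the semistable sheaf $\sF$), and if you carry out the algebra you wrote, the left-hand side $2(n-1)c_2(X)c_1(X)^{n-2}$ is bounded below by \emph{exactly} $2(n-1)c_1(\sF)c_1(X)^{n-1}-n\,c_1(\sF)^2c_1(X)^{n-2}$, with a non-strict $\geq$ at every step. The slope inequality $n\,c_1(\sF)c_1(X)^{n-1}>(n-1)c_1(X)^n$, which you announce as ``the source of the factor $2(n-1)c_1(X)-nc_1(\sF)$'' and later as the source of strictness, never actually enters the computation: that factor falls out of Whitney plus Bogomolov--Gieseker alone, and the term $2(n-1)c_1(\sF)c_1(X)^{n-1}$ appears identically on both sides of the target inequality, so ``bounding it below strictly'' does nothing. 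Moreover Bogomolov--Gieseker for a semistable sheaf is genuinely non-strict (equality occurs, e.g.\ in the projectively flat case), so you cannot upgrade that step either. Since the application (Theorem~\ref{t.qQ=5}) derives a contradiction with the exact equality $c_2(X)c_1(X)=\tfrac{8}{25}c_1(X)^3$, the non-strict version of the proposition is useless there; the strict inequality is the whole point.

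The paper's proof of Proposition~\ref{p.ineqforn-1} uses the same skeleton (exact sequence $0\to\sF\to\sT_X\to\sQ\to 0$, Whitney, Bogomolov--Gieseker on $\sF$), but it keeps the term $c_2(\sQ)$ that you discarded and proves it contributes \emph{strictly} positively: since $\sT_X$ is generically ample, $\sF$ is a foliation (Campana--P\u{a}un), which is algebraically integrable because it is semistable with ample determinant, and then Lemma~\ref{l.singularlocus} (the uniqueness-of-leaves argument through a point of the intersection of two general fibres, using $\bQ$-factoriality and $\rho(X)=1$) shows the singular locus of $\sF$ has codimension exactly two; hence $\sQ$ fails to be locally free along a non-empty codimension-two set and $c_2(\sQ)c_1(X)^{n-2}>0$. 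So the foliation input you explicitly disclaim (``no MMP or foliation input is needed for this proposition'') is precisely the missing ingredient. To repair your argument, restore $c_2(\sQ)$ in the Whitney formula and supply the argument that it pairs strictly positively with $c_1(X)^{n-2}$.
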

	
	\begin{proof}
		Since $\sT_X$ is generically ample by \cite[Proposition 3.6]{LiuLiu2023}, it follows from \cite[Lemma 4.10]{CampanaPaun2019} that $\sF$ is actually a foliation. Moreover, as $\sF$ is semi-stable and $\det(\sF)$ is ample, the foliation $\sF$ is algebraically integrable by \cite[Theorem 1.1]{CampanaPaun2019}.\footnote{Though \cite[Lemma 4.10 and Theorem 1.1]{CampanaPaun2019} are only stated for non-singular projective varieties, it is easy to generalise them to $\bQ$-factorial normal projective varieties by taking a resolution.} Consider the following short exact sequence of coherent sheaves: 
		\[
		0\longrightarrow \sF\longrightarrow \sT_X \longrightarrow \sQ \longrightarrow 0.
		\]
		By Lemma~\ref{l.singularlocus}, the singular locus of the foliation $\sF$ has codimension two. In particular, the non--locally free locus of the quotient $\sQ$ is supported on a closed subset of $X$ with codimension two, and therefore $c_2(\sQ)$ is represented by a non-zero effective codimension two cycle. So we get
		\begin{align*}
			c_2(\sT_X) c_1(X)^{n-2} & = (c_2(\sF)+c_2(\sQ) + c_1(\sF)c_1(\sQ))c_1(X)^{n-2} \\
			& >c_2(\sF)c_1(X)^{n-2}+c_1(\sF)c_1(X)^{n-1}-c_1(\sF)^2c_1(X)^{n-2} \\
			& \geq c_1(\sF)c_1(X)^{n-1}-\frac{n}{2n-2} c_1(\sF)^2c_1(X)^{n-2},
		\end{align*}
		where the last inequality follows from the Bogomolov--Gieseker inequality.
	\end{proof}
	
	\begin{proof}[Proof of Theorem~\ref{t.qQ=5}]
		Assume to the contrary that there exists a terminal $\bQ$-Fano threefold $X$ such that $b_X\geq 121/41$. By the discussion at the beginning of this section, the maximal destabilizing subsheaf $\sF$ of $\sT_X$ is of rank two and $c_1(\sF)\equiv 4A$. In particular, as $n=3$ and $c_1(X)\equiv 5A$, it follows from Proposition~\ref{p.ineqforn-1} that 
		\[
		c_2(X)c_1(X) > 8A^2c_1(X) = \frac{8}{25} c_1(X)^3,
		\]
		which contradicts the fact that $b_X=25/8$.
	\end{proof}
	
	\section{Sarkisov link}\label{sec.sarkisov}
	Let $X$ be a terminal $\bQ$-Fano threefold such that $\Cl(X)$ is torsion-free. In particular, $qW(X)=q\bQ(X)$. Set $q\coloneqq qW(X)=q\bQ(X)$. Let $A$ be an ample Weil divisor that generates the group $\Cl(X)\simeq \bZ$.
	So $-K_X\sim qA$. Following \cite{Alexeev1994},
	let $\sM$ be a movable linear system on $X$, and let $c\coloneqq\ct(X,\sM)$ be the canonical threshold of $(X,\sM)$. 
	Then the pair $(X,c\sM)$ is canonical but not terminal. Assume that $-(K_X+c\sM)$ is ample. 
	Let $f\colon \tilde X\to X$ be a $(K+c\sM)$-crepant blowup such that
	\[
	K_{\tilde X}+c\tilde{\sM}=f^*(K_X+c\sM),
	\]
	where $\tilde \sM$ is the strict transform of $\sM$ and $\tilde X$ has only terminal $\bQ$-factorial singularities.
	
	\begin{lem}[\textit{cf.} \protect{\cite[Lemma~4.2]{Prokhorov2010}}]\label{lem.ct}
		Let $P\in X$ be a point of local index $r>1$. Assume that  $\sM\sim -tK_X$ near $P$, where $0<t<r$. Then $c\leq 1/t$.
	\end{lem}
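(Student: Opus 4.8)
The plan is to compare discrepancies at the point $P$ using the geometry of the blowup $f\colon \tilde X\to X$ together with the assumption that $\sM\sim_{\bQ} -tK_X$ locally. First I would localize the situation analytically around $P$, so that we may work with the germ $(X\ni P)$ of a terminal threefold singularity of index $r>1$; by the classification of terminal singularities (or simply by Reid's description), near $P$ the divisor $-K_X$ is $r$-torsion in the local class group, and a general member $M\in\sM$ has $M\sim_{\bQ} -tK_X\sim_{\bQ} tA$ near $P$ where $A$ is a local generator, with $1\le t<r$. The key point is that $M$ is then \emph{not} Cartier at $P$: its class in the local class group is $t$ times a generator and hence nonzero since $0<t<r$.

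The heart of the argument will be a discrepancy comparison: since $(X,c\sM)$ is canonical, every exceptional divisor $E$ over $X$ satisfies $a(E,X,c\sM)=a(E,X)-c\,\mult_E(\sM)\ge 0$, i.e. $c\le a(E,X)/\mult_E(\sM)$ for every $E$ with $\mult_E(\sM)>0$. So it suffices to exhibit one exceptional divisor $E$ over $P$ with $a(E,X)=1$ (a crepant or minimal-discrepancy divisor) and $\mult_E(\sM)\ge t$. I would produce such an $E$ as a suitable weighted blowup of $P$: because $(X\ni P)$ has index $r$, there is a divisorial valuation $E$ (for instance the one realizing the minimal log discrepancy, or an explicit Kawamata-type weighted blowup of a cyclic quotient chart) with discrepancy $a(E,X)=1$ and with $\mult_E(-K_X)\ge 1$ — indeed $\mathrm{ord}_E$ takes the value $1$ (or a value $\ge 1$ after normalizing) on a local generator $A$ of the class group, because $E$ "sees" the torsion. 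Then, since $\sM\sim_{\bQ} -tK_X$ near $P$ and $\mult_E$ is linear on $\bQ$-divisor classes, $\mult_E(\sM)=t\cdot\mult_E(A)\ge t$, and combining with $a(E,X)=1$ gives $c\le 1/t$ as desired. This is essentially the content of \cite[Lemma~4.2]{Prokhorov2010}, transplanted to our setting.

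The main obstacle I anticipate is making precise the claim that there exists an exceptional divisor $E$ over $P$ with $a(E,X)=1$ and $\mathrm{ord}_E$ of value at least $1$ on a generator of the local Weil divisor class group. For a cyclic quotient singularity $\frac{1}{r}(1,-1,b)$ this is transparent: one takes the weighted blowup with weights $\frac{1}{r}(a_1,a_2,a_3)$ with $a_1+a_2+a_3=r$, which is crepant (discrepancy $0$... so one must instead take weights summing to $2r$ for discrepancy $1$, or more carefully pick the toric divisor whose discrepancy is exactly $1$), and reads off the orders of vanishing of the torsion classes from the toric data. For a general terminal singularity of index $r$ one passes to the index-one cover or uses the fact that all terminal singularities of index $r$ admit, after deformation, a description as a cyclic quotient of a hypersurface, and the divisor $E$ may be taken to be (the strict transform of) a toric divisor in the quotient chart. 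I would therefore structure the proof to first treat the cyclic quotient case by an explicit toric computation of discrepancy and multiplicity, and then reduce the general case to it via the standard local structure theory, citing \cite{Reid1987}. The remaining steps — linearity of $\mult_E$ on $\bQ$-Cartier classes and the canonicity inequality — are routine.
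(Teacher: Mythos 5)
The paper does not actually prove this lemma; it is quoted from \cite[Lemma~4.2]{Prokhorov2010}, so your proposal has to be judged against the standard argument there. Your overall strategy is the right one~--- canonicity of $(X,c\sM)$ gives $c\le a(E,X)/\mult_E(\sM)$ for any exceptional divisor $E$ over $P$ with $\mult_E(\sM)>0$, and one must then exhibit a well-chosen $E$~--- but the execution has a genuine gap in the choice and analysis of $E$.

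First, the step ``$\mult_E(\sM)=t\cdot\mult_E(A)$ by linearity of $\mult_E$ on $\bQ$-divisor classes'' is false: $\mult_E$ is not a function of the divisor \emph{class} (linearly equivalent effective divisors have different multiplicities along $E$). What is true is that $D\mapsto \mult_E(D)\bmod \bZ$ descends to a homomorphism $\Cl(X,P)\to\bQ/\bZ$, so from $M\sim tA$ one only gets $\mult_E(M)\equiv t\cdot\mult_E(A)\pmod{\bZ}$. Second, and consequently, aiming for a divisor with $a(E,X)=1$ and $\mult_E(\sM)\ge t$ cannot work: for $t$ close to $r$ a general member of a movable linear system will not vanish to order $\ge t$ along any fixed divisorial valuation, and the congruence above only yields $\mult_E(M)\ge \overline{tu}/r$ for some unit $u$ depending on $E$, which can be as small as $1/r$. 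The correct normalization is the opposite one: take the Kawamata exceptional divisor over the index-$r$ point $P$ with \emph{minimal} discrepancy $a(E,X)=1/r$ (for a cyclic quotient $\tfrac1r(1,-1,b)$ this is the weighted blowup with weights $\tfrac1r\bigl(\overline{b^{-1}},\overline{-b^{-1}},1\bigr)$, whose log-discrepancy sum is $(r+1)/r$). For this $E$ one checks that $\mult_E(-K_X)\equiv a(E,X)=1/r\pmod{\bZ}$, i.e.\ the generator $A\sim -K_X$ of $\Cl(X,P)\cong\bZ/r$ is sent to exactly $1/r$ in $\tfrac1r\bZ/\bZ$; hence $\mult_E(M)\equiv t/r\pmod{\bZ}$ and, being non-negative with $0<t<r$, satisfies $\mult_E(M)\ge t/r$. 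Then $c\le (1/r)/(t/r)=1/t$. So the missing idea is not ``a crepant-plus-one divisor that sees the torsion,'' but the specific discrepancy-$1/r$ divisor for which the value of $\mathrm{ord}_E$ on $-K_X$ is pinned down to $1/r$ modulo $\bZ$; without that identification the multiplicity bound you need does not follow.
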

	
	By \cite[Section~4.2]{Alexeev1994} (see also \cite[Section~4.3]{Prokhorov2010}),
	there exists a diagram called a \emph{Sarkisov link $($of type I or type II\,$)$} as follows: 
	\[
	\xymatrix{
		&\tilde X \ar[dl]_{f}\ar@{-->}[r]^{\chi}& \bar X\ar[dr]^-{\hat f}&\\ 
		X&& &\hat X, 
	}
	\]
	where $\tilde X$ and $\bar X$ have only $\bQ$-factorial terminal singularities, $\rho(\tilde X)=\rho(\bar X)=2$, the morphism $f$ is a Mori extremal divisorial contraction, the rational map $\chi$ is a sequence of log flips and $\hat f$ is a Mori extremal contraction (either divisorial or of fibre type).
	
	In what follows, for a divisor $D$ (also a linear system) on $X$, we denote by $\tilde D$ and $\bar D$ the strict transforms of $D$ on $\tilde X$ and $\bar X$, respectively; if $\hat f$ is birational, then we put $\hat D=\hat f_*\bar D$. Conversely, for a divisor $\hat D$ (also a linear system) on $\hat X$, we denote by $\bar D$ and $\tilde D$ the strict transforms of $\hat D$ on $\bar X$ and $\tilde X$, respectively; if $\hat f$ is birational and $\bar F$ is the $\hat f$-exceptional divisor, then we denote by $\tilde F$ the strict transform of $\bar F$ on $\tilde X$ and put $F=f(\tilde F)$.
	
	Let $\tilde E$ be the $f$-exceptional divisor. Set $\sS_k\coloneqq |kA|$ for $k\geq 1$. Write 
	\begin{equation}\label{eq.twoeq}
		\begin{split}
			K_{\tilde X} \sim_{\bQ} f^*K_X+\alpha \tilde E,\quad
			\tilde \sS_k \sim_{\bQ} f^*\sS_k-\beta_k \tilde E,
		\end{split} 
	\end{equation}
	where $\alpha\in \bQ_{>0}$ and $\beta_k\in \bQ_{\geq 0}$ for $k\geq 1$. Since $\Cl(X)$ is torsion-free and $-kK_X\sim q\sS_k$, the relations \eqref{eq.twoeq} provide that for any $k\geq 1$, we have
	\[
	q\beta_k-k\alpha\in \bZ.
	\]
	
	Assume that the morphism $\hat f$ is birational (Sarkisov link of type I). Then $\hat X$ is a terminal $\bQ$-Fano threefold. Set $\hat q\coloneqq q\bQ(\hat X)$ and let $A_{\hat X}$ be an ample Weil divisor on $\hat{X}$ that generates the group $\Cl(\hat X)/\sim_{\bQ}$.
	Denote by $\bar F$ the $\hat f$-exceptional divisor. Write 
	\[
	F\sim dA, \quad \hat E\sim_{\bQ} eA_{\hat X}, \quad \hat \sS_k\sim_{\bQ} s_k A_{\hat X},
	\]
	where $d, e, s_k\in \bZ_{\geq 0}$.
	
	Assume that the morphism $\hat f$ is not birational  (Sarkisov link of type II). Denote by $\bar F$ a general geometric fibre of $\hat f$, which is either a non-singular rational curve or a non-singular del Pezzo surface. The image of the restriction map $\Cl (\bar X) \to \Pic (\bar F)$ is isomorphic to $\bZ$. Let $A_{\bar F}$ be an ample generator of this image. Write
	\[
	-K_{\bar X}|_{\bar F}=-K_{\bar F}\sim \hat qA_{\bar F}, \quad \bar E|_{\bar F}\sim eA_{\bar F}, \quad \bar \sS_k|_{\bar F}\sim s_k A_{\bar F},
	\]
	where $\hat q, e, s_k\in \bZ_{\geq 0}$.
	
	\begin{prop}[\textit{cf.} \protect{\cite[Section~4]{Prokhorov2010}}]
		\label{prop.sarkisov}
		The notation is as above. 
		\begin{enumerate}
			\item\label{i1.p-sarkisov}  The integer $e$ is positive. Moreover, if $\hat f$ is birational, then $d/e$ is the order of the torsion subgroup $\Cl(\hat X)_t$ of $\Cl(\hat X)$.
			\item\label{i2.p-sarkisov} Assume that $\hat f$ is birational. Then $s_k=0$ if and only if $\dim \sS_k=0$ and the unique element of $\bar{\sS}_k$ is $\bar{F}$. Moreover, if $s_k>0$ and $\Cl (\hat X)$ is torsion-free, then $\hat{\sS}_k \sim s_k A_{\hat{X}}$ and thus
			$
			\dim |s_kA_{\hat X}|\geq \dim \hat \sS_k = \dim \sS_k
			$.
                        
			\item\label{i3.p-sarkisov} If\, $\hat q\geq 4$, then $\hat f$ is birational.
			
			\item\label{i4.p-sarkisov} If\, $\hat{f}$ is not birational and $\hat q=3$, then $\bar{F}\cong \bP^2$ and $\hat{X}\cong \bP^1$.
		\end{enumerate}
	\end{prop}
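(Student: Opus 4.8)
The proposition collects the standard facts about the two‑ray game underlying a Sarkisov link, and the plan is to adapt the analysis of \cite[Section~4]{Prokhorov2010} (see also \cite[Section~4.2]{Alexeev1994}) to the present normalisation. The first step is the bookkeeping for Weil divisor class groups. Since $\chi$ is a composition of flips, it is an isomorphism in codimension one, so $\Cl(\tilde X)\cong\Cl(\bar X)$; since $f$ is a divisorial contraction of the prime divisor $\tilde E$, there is a split exact sequence $0\to\bZ[\tilde E]\to\Cl(\tilde X)\to\Cl(X)\to 0$, and as $\Cl(X)=\bZ A$ this yields $\Cl(\tilde X)\cong\Cl(\bar X)\cong\bZ^2$ with basis $[\tilde A]$, $[\tilde E]$, where $\tilde A$ and $\bar A$ denote the strict transforms of $A$. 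In the type~I case, $\hat f$ being a divisorial contraction of $\bar F$ gives $\Cl(\hat X)\cong\Cl(\bar X)/\bZ[\bar F]\cong\bZ^2/\bZ v$ with $v=[\bar F]$, and the free quotient $\Cl(\hat X)/\sim_{\bQ}$ is $\cong\bZ$, generated by $A_{\hat X}$.

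For \textbf{(1)}, the key point -- and, I expect, the main obstacle -- is that the strict transform $\bar E$ of the $f$-exceptional divisor $\tilde E$ is \emph{not} $\hat f$-exceptional: $\bar E\neq\bar F$ in the type~I case, and $\bar E$ dominates $\hat X$ in the type~II case. This is exactly where one must use the construction of the link rather than a formal argument: on $\tilde X$ the divisor $\tilde E$ is $f$-negative, hence spans the extremal ray of the pseudo-effective cone opposite to the nef ray $\bR_{\geq 0}[f^*A]$, whereas the flips $\chi$ are carried out along the other extremal ray, along which $K_{\tilde X}+c\tilde\sM=f^*(K_X+c\sM)$ is negative; tracking the pseudo-effective cone through $\chi$ shows that $[\bar E]$ still does not lie on the ray contracted by $\hat f$, so $\bar E$ is $\hat f$-horizontal and $\hat E\neq 0$. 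As $\rho(\hat X)=1$ and $\hat E$ is effective and nonzero, $e>0$. For the torsion statement in the type~I case: since $f_*[\tilde F]=[F]=d[A]$ while $f_*[\tilde E]=0$, we may write $[\bar F]=d[\bar A]+c[\bar E]$ for some $c\in\bZ$; then $\Cl(\hat X)=\bZ^2/\bZ(d,c)$ has torsion subgroup of order $\gcd(d,c)$, while the image of $[\bar E]$ in the free quotient $\cong\bZ$ is $\pm d/\gcd(d,c)$. Comparing with $\hat E\sim_{\bQ} e A_{\hat X}$ gives $e=d/\gcd(d,c)$, hence $d/e=\gcd(d,c)=|\Cl(\hat X)_t|$.

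For \textbf{(2)}, I would first note that taking strict transforms along $f$, $\chi$ and $\hat f$ preserves the dimension of a linear system, so -- provided no member of $\bar\sS_k$ is $\hat f$-exceptional, which by \textbf{(1)} holds as soon as $s_k>0$ -- one has $\dim\sS_k=\dim\tilde\sS_k=\dim\bar\sS_k=\dim\hat\sS_k$. If $s_k=0$, then $\hat\sS_k\sim_{\bQ}0$, so (using $\rho(\hat X)=1$) the unique member of $\hat\sS_k$ is the zero divisor; pulling back, the unique member of $\bar\sS_k$ is supported on $\bar F$, and since strict transform is injective on prime divisors it equals $\bar F$, which forces $\dim\sS_k=\dim\bar\sS_k=0$. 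The converse is immediate since $\hat f_*\bar F=0$. Finally, if $s_k>0$ and $\Cl(\hat X)$ is torsion-free, then $\bQ$-linear equivalence coincides with linear equivalence, so $\hat\sS_k$ is a subsystem of the complete system $|s_kA_{\hat X}|$, whence $\dim|s_kA_{\hat X}|\geq\dim\hat\sS_k=\dim\sS_k$.

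Parts \textbf{(3)} and \textbf{(4)} are then short. When $\hat f$ is not birational, the general fibre $\bar F$ is a smooth rational curve or a smooth del Pezzo surface, and $A_{\bar F}$ generates the image of $\Cl(\bar X)\to\Pic(\bar F)$, which is $\cong\bZ$ and contains $-K_{\bar F}$. Writing $-K_{\bar F}\sim\hat q A_{\bar F}$ with $A_{\bar F}\in\Pic(\bar F)$ shows that $\hat q$ divides $-K_{\bar F}$ in $\Pic(\bar F)$, so $\hat q$ is at most the Fano index of $\bar F$ -- which is $2$ when $\bar F\cong\bP^1$ and at most $3$ when $\bar F$ is a del Pezzo surface, with the value $3$ attained only for $\bP^2$. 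Hence $\hat q\leq 3$, which proves \textbf{(3)}; and if $\hat q=3$, then $\bar F\cong\bP^2$, so $\hat f$ is a del Pezzo fibration with general fibre $\bP^2$ over a smooth curve $\hat X$. Since $\bar X$ is birational to the $\bQ$-Fano $X$ it is rationally connected, hence so is $\hat X$, and a smooth rationally connected curve is $\bP^1$; this proves \textbf{(4)}.
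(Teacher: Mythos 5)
Your parts (2), (3) and (4) are correct and follow the same route as the paper: the paper disposes of (2) as an immediate consequence of the definitions, and proves (3)--(4) exactly as you do, by noting that a general fibre $\bar F$ of a non-birational $\hat f$ is a non-singular rational curve or del Pezzo surface, so $\hat q\leq 3$ with equality forcing $\bar F\cong \bP^2$ (and then $\hat X$ is a rationally connected smooth curve, hence $\bP^1$). For part (1) the paper simply cites \cite[Claim 4.6 and Lemma 4.13]{Prokhorov2010}, whereas you attempt a self-contained argument; your lattice computation identifying $d/e$ with $|\Cl(\hat X)_t|$ via $\Cl(\bar X)\cong \bZ[\bar A]\oplus\bZ[\bar E]$ and $[\bar F]=d[\bar A]+c[\bar E]$ is correct and is a genuine (and welcome) replacement for that citation, granted $e>0$.

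The gap is exactly where you yourself locate it: the claim that $\bar E$ is not $\hat f$-exceptional (type I), respectively not $\hat f$-vertical (type II), \emph{i.e.} $e>0$. ``Tracking the pseudo-effective cone through $\chi$'' is an assertion, not an argument: to make it one you would have to establish that the nef cones of the intermediate models tile a sector of the movable cone in strict angular order from the ray $[f^*A]$ to the ray $\hat f^*(\textup{ample})$, that each of these cones is full-dimensional, and that $\Pseff$ is salient, before concluding that the extremal ray $\ker f_*\cap\Pseff(\tilde X)$ spanned by $[\tilde E]$ differs from $\ker \hat f_*\cap\Pseff(\bar X)$. None of this appears in your text, and it is the whole content of the step. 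The argument behind \cite[Claim 4.6]{Prokhorov2010} is different and shorter: if $\bar E$ were $\hat f$-exceptional, the induced map $X\dashrightarrow \hat X$ would be an isomorphism in codimension one between $\bQ$-factorial varieties of Picard number one sending $-K_X$ to the ample divisor $-K_{\hat X}$, hence an isomorphism of anticanonical rings and so of varieties; then $\tilde X$ and $\bar X$ are both the extraction of $E=F$ over $X$ with relative Picard number one and anti-ample exceptional divisor, so $\chi$ is an isomorphism, and $f$, $\hat f$ become two distinct extremal contractions of $\tilde X$ with the same exceptional divisor, forcing $-\tilde E$ to be ample---impossible for a non-zero effective divisor. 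A similar degeneration argument treats the fibre-type case. You should either supply the missing cone-theoretic details or switch to this argument (or, as the paper does, cite it).
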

	\begin{proof}
		The first part of~\eqref{i1.p-sarkisov} follows directly from \cite[Claim 4.6]{Prokhorov2010}, and the second part follows from \cite[Lemma 4.13]{Prokhorov2010}. 
		The assertions of~\eqref{i2.p-sarkisov} follows from the definition.
		For the assertions of~\eqref{i3.p-sarkisov} and~\eqref{i4.p-sarkisov}, we assume that $\hat{f}$ is not birational. Since $\bar{F}$ is either a non-singular rational curve or a non-singular del Pezzo surface, it follows that $\hat{q}\leq 3$, where  equality holds only if $\bar{F}\cong \bP^2$.
	\end{proof}
	
	The following very important equality, which indicates the relation between the various invariants introduced above,  will be intensively used in Sections~\ref{s.qbQ=4} and~\ref{s.qQ=8}.
	
	\begin{lem}[\textit{cf.} \protect{\cite[Section~4.8]{Prokhorov2022}}]
		The notation is as above. For any $k\geq 1$, we have
		\begin{equation}\label{eq.key} 
			k\hat q=qs_k+(q\beta_k-k\alpha)e.
		\end{equation}
	\end{lem}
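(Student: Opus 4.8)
The plan is to propagate one $\bQ$-linear equivalence across the whole Sarkisov link. On $X$ we have $-K_X\sim qA$, and a general member $S_k\in\sS_k=|kA|$ satisfies $S_k\sim kA$, so $-kK_X\sim q\sS_k$. Both sides are $\bQ$-Cartier because $X$ is $\bQ$-factorial, so I may pull back by $f$ and substitute the two relations of \eqref{eq.twoeq}; this gives, on $\tilde X$,
\[
-kK_{\tilde X}\sim_{\bQ} q\tilde\sS_k+(q\beta_k-k\alpha)\tilde E ,
\]
where $q\beta_k-k\alpha\in\bZ$ as already recorded. Since $\chi$ is a composition of log flips, it is an isomorphism in codimension one, hence identifies $\Cl(\tilde X)$ with $\Cl(\bar X)$ compatibly with $\bQ$-linear equivalence and carries $K_{\tilde X},\tilde\sS_k,\tilde E$ to $K_{\bar X},\bar\sS_k,\bar E$. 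Therefore
\[
-kK_{\bar X}\sim_{\bQ} q\bar\sS_k+(q\beta_k-k\alpha)\bar E
\]
holds on $\bar X$.

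Now I split according to the type of link. If $\hat f$ is birational (type I), I push this relation forward by $\hat f$: pushforward of $\bQ$-Cartier divisors along a birational morphism preserves $\bQ$-linear equivalence, and by construction $\hat f_*K_{\bar X}=K_{\hat X}$, $\hat f_*\bar\sS_k=\hat\sS_k$, $\hat f_*\bar E=\hat E$. Feeding in $-K_{\hat X}\sim_{\bQ}\hat qA_{\hat X}$, $\hat\sS_k\sim_{\bQ}s_kA_{\hat X}$ and $\hat E\sim_{\bQ}eA_{\hat X}$ yields $k\hat q A_{\hat X}\sim_{\bQ}\bigl(qs_k+(q\beta_k-k\alpha)e\bigr)A_{\hat X}$; as $A_{\hat X}$ generates the torsion-free group $\Cl(\hat X)/\!\sim_{\bQ}\cong\bZ$ and both coefficients are integers, they coincide. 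If $\hat f$ is not birational (type II), I instead restrict the relation on $\bar X$ to a general fibre $\bar F$ through the map $\Cl(\bar X)\to\Pic(\bar F)$: restriction to a general fibre preserves $\bQ$-linear equivalence, adjunction for the general fibre of a Mori fibre contraction gives $K_{\bar X}|_{\bar F}=K_{\bar F}$ with no correction term, and by construction $\bar\sS_k|_{\bar F}\sim s_kA_{\bar F}$, $\bar E|_{\bar F}\sim eA_{\bar F}$, $-K_{\bar F}\sim\hat qA_{\bar F}$. Comparing coefficients of the ample generator $A_{\bar F}$ again gives $k\hat q=qs_k+(q\beta_k-k\alpha)e$.

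The computation producing the relation on $\tilde X$ is routine linear algebra in $\Cl(\tilde X)_{\bQ}$; the genuine care lies in the bookkeeping. One must check that $f^{*}$, the flips $\chi$, the pushforward $\hat f_{*}$ (type I), and restriction to a general fibre (type II) each preserve $\bQ$-linear equivalence of the Weil $\bQ$-Cartier divisors involved, and that the images of $K$, $\tilde\sS_k$, $\tilde E$ under these operations are exactly the divisors named in the set-up (this is essentially how $d,e,s_k$ and $\bar F$ were defined, so it is bookkeeping rather than a new input, all following \cite[Section~4]{Prokhorov2010}). The final coefficient comparison takes place in $\bZ$ precisely because $q\beta_k-k\alpha$ is an integer — a fact that ultimately rests on $\Cl(X)$ being torsion-free — so no potential torsion ambiguity survives. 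I expect the only mildly delicate point to be the type~II restriction, where one needs the general fibre to avoid the relevant singular and base loci so that both the adjunction identity and the restriction of $\bQ$-linear equivalences are valid.
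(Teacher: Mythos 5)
Your proof is correct and is exactly the standard argument behind this equality (the paper gives no proof of its own, deferring to \cite[Section~4.8]{Prokhorov2022}, where the same computation is carried out): pull $-kK_X\sim q\sS_k$ back along $f$ using \eqref{eq.twoeq}, transport the resulting relation across the small map $\chi$, and then either push forward by $\hat f$ and compare coefficients of $A_{\hat X}$ in $\Cl(\hat X)/\!\sim_{\bQ}\cong\bZ$, or restrict to a general fibre $\bar F$ and compare coefficients of $A_{\bar F}$. Your bookkeeping of why each step preserves the relevant equivalences, and of where the integrality of $q\beta_k-k\alpha$ enters, is accurate.
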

	
	For any point $P\in X$ of local index $r_P$, recall that the local Weil divisor class group $\Cl(X,P)$ is a cyclic group of order $r_P$ generated by $-K_X$, by \cite[Corollary 5.2]{Kawamata1988}. Let $t_k(P)$ be the local index of $\sS_k$ near~$P$. When the context is clear, we shall omit $P$ in the notation and abbreviate $t_k(P)$ to $t_k$. The following fact allows us to determine the possible value of $\beta_k$ once the value of $\alpha$ is known (see \cite{Kawamata1996,Kawakita2005} and \cite[Lemma 2.6]{Prokhorov2022}).
	
	\begin{lem}
		\label{l.localindex-integer}
		If\, $P\coloneqq f(\tilde{E})$ is a closed point of\, $X$, then we have
		\begin{equation}
			\label{e.localindex-integer}
			\beta_k-t_k\alpha\in \bZ.
		\end{equation}
	\end{lem}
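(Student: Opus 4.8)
The plan is to deduce \eqref{e.localindex-integer} by localising, at the point $P$, the computation that yields $q\beta_k-k\alpha\in\bZ$ just after \eqref{eq.twoeq}. The key translation is that, since $\Cl(X,P)$ is cyclic of order $r_P$ generated by the class of $-K_X$ (\cite[Corollary 5.2]{Kawamata1988}), the divisor $\sS_k$ having local index $t_k$ at $P$ means precisely that $\sS_k+t_kK_X$ is Cartier in a neighbourhood of $P$. So I would first fix honest integral representatives: a general member $S_k\in\sS_k$, together with canonical divisors $K_X$ and $K_{\tilde X}$ defined by the same rational differential form, so that the relations in \eqref{eq.twoeq} become genuine equalities of $\bQ$-divisors $f^*K_X=K_{\tilde X}-\alpha\tilde E$ and $f^*S_k=\tilde S_k+\beta_k\tilde E$. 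Then I would choose a small affine $U\ni P$ and a Cartier divisor $G$ on $U$ that represents $S_k+t_kK_X$ on $U$ and, crucially, satisfies $P\notin\Supp G$. The last point is available because the line bundle on $U$ associated with $S_k+t_kK_X$ is free at $P$: a local generator at $P$ extends to a rational section over $U$ whose divisor is such a $G$; this also records an identity under which $G$ agrees on $U$ with $S_k+t_kK_X+\operatorname{div}_X(\phi)$ for some $\phi\in K(X)^{\times}$.

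Next I would pull back by $f$ over $f^{-1}(U)$. Because $f$ is a divisorial contraction with $f(\tilde E)=P$, it is an isomorphism over $X\setminus\{P\}$; combined with $P\notin\Supp G$, the local equation of $G$ at $P$ is a unit, so $f^*G$ has coefficient $0$ along $\tilde E$, i.e.\ $f^*G=\tilde G$ is the strict transform, again an honest integral Weil divisor. Pulling back the recorded identity, using $f^*G=\tilde G$, and substituting the relations \eqref{eq.twoeq} then gives
\[
\tilde G=\tilde S_k+t_kK_{\tilde X}+\operatorname{div}_{\tilde X}(\phi)+(\beta_k-t_k\alpha)\tilde E
\]
as an equality of $\bQ$-divisors over $f^{-1}(U)$. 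Rearranging, $(\beta_k-t_k\alpha)\tilde E=\tilde G-\tilde S_k-t_kK_{\tilde X}-\operatorname{div}_{\tilde X}(\phi)$, and the right-hand side has integer coefficient along every prime divisor (strict transforms, a Weil canonical divisor, a principal divisor, and $t_k\in\bZ$). Comparing coefficients along $\tilde E$ on both sides yields $\beta_k-t_k\alpha\in\bZ$, which is \eqref{e.localindex-integer}.

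I expect the only genuinely delicate part to be the bookkeeping in the first step rather than any geometry: \eqref{eq.twoeq} is stated only up to $\bQ$-linear equivalence, so one must be careful to fix honest integral divisors for which those relations hold as literal equalities of $\bQ$-divisors before comparing $\tilde E$-coefficients, and one must verify that the Cartier representative $G$ of $\sS_k+t_kK_X$ near $P$ can indeed be chosen away from $P$, so that it contributes nothing along $\tilde E$ after pullback. Once these choices are in place the argument is formal; in particular it uses nothing about the internal structure of the extremal extraction $f$ beyond the hypothesis that $\tilde E$ is contracted to the point $P$, which is exactly what guarantees that $f$ is an isomorphism over $X\setminus\{P\}$.
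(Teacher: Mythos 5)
Your proposal is correct and is essentially the paper's own argument: the paper proves the lemma in one line by combining the relations \eqref{eq.twoeq} with the fact that $t_kK_X+\sS_k$ is Cartier near $P$, and comparing coefficients along $\tilde E$ after pulling back. Your write-up simply makes explicit the bookkeeping (choice of integral representatives and of a Cartier representative of $S_k+t_kK_X$ avoiding $P$) that the paper leaves implicit.
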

	
	\begin{proof}
		This follows from the relations \eqref{eq.twoeq} and the fact that $t_kK_X+\sS_k$ is Cartier near $P$.
	\end{proof}
	
	\section{Case \texorpdfstring{$\boldsymbol{q\bQ(X)=4}$}{qQ(X)=4}}
	\label{s.qbQ=4}
	
	The aim of this section is to prove the following result.
	
	\begin{thm}
		\label{t.qQ=4}
		Let $X$ be a terminal $\bQ$-Fano threefold such that $q\bQ(X)=4$. Then we have
		\[
		c_1(X)^3< \frac{121}{41} c_2(X)c_1(X).
		\]
	\end{thm}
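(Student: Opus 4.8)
I would rule out, by a Sarkisov-link analysis following Yu.~Prokhorov~\cite{Prokhorov2010}, the single numerical type left open by Lemma~\ref{c.QFano3foldscandidates}. So I would first assume for contradiction that $X$ is a terminal $\bQ$-Fano threefold with $q\bQ(X)=4$ and $b_X\geq 121/41$; by Lemma~\ref{c.QFano3foldscandidates} its invariants are then forced, namely $qW(X)=q\bQ(X)=4$, $\Cl(X)=\bZ A$ with $-K_X\sim 4A$, $\cR_X=\{7,13\}$, $A^3=18/91$ and $c_2(X)c_1(X)=384/91$, while the basket $\bfB_X=\{(7,b_7),(13,b_{13})\}$ is pinned down by the orbifold Riemann--Roch relations~\eqref{eq.chita}. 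From~\eqref{eq.reid} and Kawamata--Viehweg vanishing I would then record the values $\dim|kA|$ for the small $k$ that enter below; these are small because $A^3$ is.

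Next I would set up the link of Section~\ref{sec.sarkisov}. Take for $\sM$ a general pencil in $|kA|$ for the smallest $k\geq 1$ with $\dim|kA|\geq 1$ (and, should the small pluri-anticanonical systems turn out too small, start instead directly from a divisorial extraction over the point $P_{13}$ of index $13$, which exists since $P_{13}$ is a cyclic quotient singularity $\tfrac{1}{13}(1,-1,b_{13})$ admitting its Kawamata weighted blow-up). With $c=\ct(X,\sM)$, and after checking that $-(K_X+c\sM)$ is ample, one gets the crepant extraction $f\colon\tilde X\to X$ and the link $X\xleftarrow{f}\tilde X\dashrightarrow\bar X\xrightarrow{\hat f}\hat X$. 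The center $P=f(\tilde E)$ is then one of $P_7$, $P_{13}$, a Gorenstein point, or a curve. Using Lemma~\ref{lem.ct} at $P_7$ and $P_{13}$ I would bound $c$ in terms of the local indices $t_k(P_7),t_k(P_{13})$ of $\sM$ (read off from the cyclic local class groups $\Cl(X,P)\cong\bZ/r$ generated by $-K_X$), and using the classification of threefold divisorial contractions to cyclic quotient points, together with Lemma~\ref{l.localindex-integer}, I would list the finitely many admissible pairs $(\alpha,\beta_k)$, recalling that $q\beta_k-k\alpha=4\beta_k-k\alpha\in\bZ$ always.

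The heart of the matter is the relation~\eqref{eq.key}, $k\hat q=4s_k+(4\beta_k-k\alpha)e$, which I would impose for $k=1,2,3$ simultaneously. Combined with $e\geq 1$ (Proposition~\ref{prop.sarkisov}\,\eqref{i1.p-sarkisov}), the integrality constraints above, Proposition~\ref{prop.sarkisov}\,\eqref{i2.p-sarkisov}, and the inequality $\dim|s_kA_{\hat X}|\geq\dim|kA|$ valid when $\hat f$ is birational and $\Cl(\hat X)$ is torsion-free, this cuts the possibilities down to a short list. If $\hat f$ is birational, then $\hat X$ is again a terminal $\bQ$-Fano threefold, and I would finish by invoking $qW(\hat X)=q\bQ(\hat X)$ with the $\Cl$-torsion classification (Proposition~\ref{p.torsion}) and Theorem~\ref{t.LiuLiu3folds} when $\hat q\geq 5$, by using Lemma~\ref{l.q=3dimensionofA} together with the forced value of some $\dim|s_kA_{\hat X}|$ when $\hat q\in\{3,4\}$, or, if need be, by iterating the link until it lands in an already-excluded configuration. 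If $\hat f$ is of fibre type, Proposition~\ref{prop.sarkisov}\,\eqref{i3.p-sarkisov}--\eqref{i4.p-sarkisov} force $\hat q\leq 3$, with $\bar F\cong\bP^2$ and $\hat X\cong\bP^1$ when $\hat q=3$; the restriction relations $-K_{\bar F}\sim\hat qA_{\bar F}$, $\bar E|_{\bar F}\sim eA_{\bar F}$, $\bar\sS_k|_{\bar F}\sim s_kA_{\bar F}$ then feed back into~\eqref{eq.key} and give a contradiction directly.

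I expect the main obstacle to be controlling $f$ and the ensuing case split: one must classify the divisorial contractions over $P_7$ and $P_{13}$ (and rule out extractions over a curve or a Gorenstein point) in order to pin down $(\alpha,\beta_k)$, and then carry each surviving branch of the link --- in particular the birational branch, where one either analyzes the new $\bQ$-Fano threefold $\hat X$ or iterates the construction --- to a numerical contradiction. No single computation is deep; the real difficulty is keeping the bookkeeping of $(\alpha,\beta_k,e,d,s_k,\hat q)$ coherent across all branches.
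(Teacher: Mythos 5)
Your overall strategy---reduce to the single numerical type $\cR_X=\{7,13\}$ via Lemma~\ref{c.QFano3foldscandidates} and kill it with a Sarkisov link \`a la Prokhorov, splitting on the centre $f(\tilde E)$ and exploiting \eqref{eq.key} together with Propositions~\ref{prop.sarkisov} and~\ref{p.torsion} and the tables---is exactly the paper's. But there is a genuine gap in your endgame: the numerical sieve does \emph{not} cut the possibilities down to configurations that can be dismissed by Theorem~\ref{t.LiuLiu3folds} or by non-existence of $\hat X$. For instance, when $f(\tilde E)$ is the index-$13$ point, the constraints leave $(\hat q,e,s_3)$ equal to $(3,1,\le 1)$, $(6,2,2)$, $(7,1,1\le s_3\le 4)$, $(13,3,6)$ and $(17,3,9)$; in the last two cases $\hat X$ has numerical type \textnumero\,28 resp.\ \textnumero\,30 of Table~\ref{table.qgeq6}, both of which are geometrically realised, so no appeal to the Kawamata--Miyaoka inequality or to a classification of $\hat X$ can produce a contradiction. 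The paper's actual closing mechanism, which your proposal never identifies, is Lemma~\ref{l.hatS2andS3}: since $\dim|A|=-1$ and $\Cl(X)\cong\bZ$, every member of $|2A|$ and $|3A|$ is reduced and irreducible, hence every member of $\bar\sS_2$ and $\bar\sS_3$ has the form $\bar\Delta+a\bar E$ with $\bar\Delta$ prime. In each surviving case one then exhibits, by comparing $\dim\hat\sS_k$ with $\dim|s_kA_{\hat X}|$ and with sublinear systems of the form $D+|mA_{\hat X}|$, a visibly reducible or non-reduced member ($D+G$, $2D$, $3D$, $D+D'$, \dots) of $\hat\sS_2$ or $\hat\sS_3$, contradicting that lemma. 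Without this ingredient the argument does not terminate, and ``iterating the link'' is not a substitute, since the link lands on honest Fano threefolds.

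Two smaller points. First, you take $\sM$ to be a pencil in $|2A|$ (the smallest $k$ with $\dim|kA|\ge 1$), whereas the paper takes the full system $|3A|$ with $\dim\sS_3=3$; this matters, because the inequality $\dim|s_3A_{\hat X}|\ge\dim\hat\sS_3=3$ is what eliminates most values of $s_3$ from Table~\ref{table.qgeq6}, while $\dim|s_2A_{\hat X}|\ge 1$ is far too weak to do so. Second, imposing \eqref{eq.key} for $k=1$ is vacuous here since $|A|=\emptyset$. Your hedge of starting from a prescribed weighted blow-up of $P_{13}$ is unnecessary: the paper lets $f$ be whatever crepant extraction the pair $(X,c\sS_3)$ produces and only uses Kawamata's $\alpha=1/r$ when the centre is a non-Gorenstein point, plus the integrality \eqref{e.localindex-integer}.
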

	
	Assume to the contrary that there exists a terminal $\bQ$-Fano threefold $X$ such that $q\bQ(X)=4$ and $b_X\geq 121/41$. Then Lemma~\ref{c.QFano3foldscandidates} shows that the numerical type of $X$ appears as~\eqref{c.r.4}. In particular, the numerical invariants of $X$ are as follows (\textit{cf.} \cite[\textnumero\,41313]{BrownKasprzyk2009}):
	\[
	\bfB_X=\{(7,2),(13,6)\},\quad c_1(X)^3=\frac{1152}{91}, \quad c_2(X)c_1(X)=\frac{384}{91} \quad \textup{and} \quad b_X=3.
	\]
	
	\subsection{Geometry of $\boldsymbol{X}$}
	
	We collect  some basic geometric properties of $X$, which will be used later in the proof of Theorem~\ref{t.qQ=4}.
	
	\begin{lem}
		\label{l.invariantsofX}
		Let $X$ be a terminal $\bQ$-Fano threefold with numerical type~\eqref{c.r.4} in Lemma~\ref{c.QFano3foldscandidates}.
		\begin{enumerate}
			\item\label{item.non-Gorenstein} Every non-Gorenstein point of\, $X$ is a cyclic quotient singularity.
			
			\item\label{item.torsionfree} The Weil divisor class group $\Cl(X)$ is torsion-free.
			
			\item\label{item.dimension} Let $A$ be a $\bQ$-Cartier Weil divisor on $X$ such that $-K_X\sim 4 A$. Then we have
			\renewcommand*{\arraystretch}{1.6}
			\begin{longtable*}{|M{2cm}|M{0.8cm}|M{0.8cm}|M{0.8cm}|M{0.8cm}|M{0.8cm}|M{0.8cm}|}
				\hline
				
				\multirow{2}{*}{$\cR_X$}
				& \multirow{2}{*}{$A^3$}
				& \multicolumn{4}{c|}{$\dim|kA|$}
				\\
				\cline{3-6}
				
				&
				& $|A|$
				& $|2A|$ 
				& $|3A|$
				& $|4A|$ 
				\\
				\hline

				$\{7,13\}$
				& $\frac{18}{91}$
				& $-1$
				& $1$
				& $3$
				& $6$
				\\
				\hline
			\end{longtable*}
			
			\item\label{item.irreandreduced} Every divisor contained in the linear system $|2A|$ or $|3A|$ is reduced and irreducible.
		\end{enumerate}
		
	\end{lem}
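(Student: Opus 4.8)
The plan is to establish the four assertions in turn, with the understanding that \eqref{item.torsionfree} will be a citation, \eqref{item.dimension} a direct application of orbifold Riemann--Roch, and \eqref{item.irreandreduced} a formal consequence of $\dim|A|=-1$; the only point that requires an idea is \eqref{item.non-Gorenstein}, which I expect to be the main obstacle.

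For \eqref{item.non-Gorenstein}, I would first observe that, since $\cR_X=\{7,13\}$, a non-Gorenstein point of index $r$ must contribute to $\cR_X$ an entry divisible by $r$; hence every non-Gorenstein point of $X$ has index exactly $7$ or exactly $13$, and in particular there is no point of index $91$. As $7$ and $13$ are primes greater than $4$, the classification of three-dimensional terminal singularities (see \cite{Reid1987,KollarMori1998}) shows that each such point is either a cyclic quotient singularity $\tfrac1r(1,-1,b)$ or a point of type $cA/r$, so it remains to exclude the latter. I would do this in one of two ways: either by recalling that a genuine $cA/r$ point contributes strictly more than one fictitious point of index $r$ to the basket $\bfB_X$, which is incompatible with $\cR_X$ having a single entry equal to $7$ and a single one equal to $13$; or, since the whole of Section~\ref{s.qbQ=4} is an argument by contradiction, by replacing $X$ with a general one-parameter $\bQ$-Gorenstein deformation, which preserves $\bfB_X$ and hence all the numerical data recorded in Lemma~\ref{c.QFano3foldscandidates}, while the general fibre has only cyclic quotient singularities. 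Pinning down this exclusion of the $cA/r$ case is the delicate step of the lemma; everything else is bookkeeping.

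For \eqref{item.torsionfree}, this is already contained in the conclusion of Lemma~\ref{c.QFano3foldscandidates} (whose hypotheses $q\bQ(X)\geq 3$ and $b_X=3$ hold for $X$), and independently follows from \cite[Proposition~2.9]{Prokhorov2010}; consequently $\Cl(X)\cong\bZ$, $qW(X)=q\bQ(X)=4$, and I fix an ample generator $A$ of $\Cl(X)$ with $-K_X\sim 4A$. For \eqref{item.dimension}, I would apply \eqref{eq.reid} to $tA$ for $t=1,2,3,4$ with $q=4$, $\bfB_X=\{(7,2),(13,6)\}$ and $c_2(X)c_1(X)=384/91$, first recovering $A^3=18/91$ from \eqref{eq.primvolume}; since $A$ is ample and $kA=K_X+(k+4)A$, Kawamata--Viehweg vanishing gives $h^i(X,kA)=0$ for $i\geq 1$ and $k\geq -3$, whence $\dim|kA|=\chi(kA)-1$ for $k=1,\dots,4$, and the routine evaluation of the correction terms $c_Q(kA)$ produces the values $-1,1,3,6$ in the table.

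Finally, for \eqref{item.irreandreduced}, if some effective divisor $D$ with $D\sim 2A$ or $D\sim 3A$ were neither reduced nor irreducible, then $D=F_1+F_2$ with $F_1,F_2$ nonzero effective; since $\Cl(X)=\bZ A$ with $A$ ample, $F_i\sim m_iA$ with $m_i\in\bZ_{>0}$ and $m_1+m_2\in\{2,3\}$, so some $m_i=1$, i.e.\ $F_i\in|A|=\emptyset$ by \eqref{item.dimension}, a contradiction. Thus the genuine content of the lemma is concentrated entirely in ruling out the $cA/r$ possibility in \eqref{item.non-Gorenstein}; once that is settled, the remaining three assertions are immediate.
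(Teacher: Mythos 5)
Your proposal is correct and follows essentially the same route as the paper: part (1) from Mori--Reid's classification together with the fact that the basket $\bfB_X=\{(7,2),(13,6)\}$ has only one fictitious point of each index (so a $cA/r$ point, which would contribute at least two index-$r$ points, is excluded), part (2) by citation, part (3) by orbifold Riemann--Roch with Kawamata--Viehweg vanishing, and part (4) from $\dim|A|=-1$ and the torsion-freeness of $\Cl(X)$. The only quibble is the phrase ``neither reduced nor irreducible'' in part (4), which should read ``not reduced or not irreducible''; your actual decomposition $D=F_1+F_2$ handles the correct (stronger) statement, so nothing is lost.
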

	
	\begin{proof}
		The first statement follows from the form of the basket $\bfB_X$ and \cite{Mori1985} (see \cite[Sections~(6.1) and (6.4)]{Reid1987} for more details). The second statement follows from Lemma~\ref{l.Fanothreefoldscandidates} (see also \cite[Proposition 2.9]{Prokhorov2010}). The dimension of $|kA|$ can be derived from the orbifold Riemann--Roch formula or \cite[\textnumero\,41313]{BrownKasprzyk2009}. The last statement follows from the facts that $\dim|A|=-1$ and $\Cl(X)$ is torsion-free.
	\end{proof}

	We will use the Sarkisov link introduced in Section~\ref{sec.sarkisov} to prove Theorem~\ref{t.qQ=4}. This idea was initiated by Yu.~Prokhorov in \cite{Prokhorov2010}. To be more precise, following the notation in Section~\ref{sec.sarkisov}, we consider the Sarkisov link associated to the movable linear system $\sM \coloneqq \sS_3=|3A|$.
	
	\begin{lem}\label{lem.abforq4}
		We have $\beta_3\geq 6\alpha$, and if $\hat f$ is birational, then $d\geq 2$.
	\end{lem}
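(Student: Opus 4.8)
The goal is to show $\beta_3\ge 6\alpha$ and, when $\hat f$ is birational, $d\ge 2$. I would extract these from the two mechanisms available in this situation: the crepancy/non-terminality constraint coming from the canonical threshold $c=\ct(X,\sM)$ with $\sM=\sS_3=|3A|$, and the key equality \eqref{eq.key} together with the structural facts in Proposition~\ref{prop.sarkisov}. Since $-K_X\sim 4A$ and $\sM\subset|3A|=|{-\tfrac34 K_X}|$, near any non-Gorenstein point $P$ (these are cyclic quotient by Lemma~\ref{l.invariantsofX}\eqref{item.non-Gorenstein}, with $\cR_X=\{7,13\}$) we have $\sM\sim -tK_X$ locally with $t=t_3(P)$ determined by $3\equiv -4t \pmod{r_P}$; this gives $t_3=1$ at the point of index $7$ (since $-4\cdot 1\equiv 3\bmod 7$) and $t_3=10$ at the point of index $13$ (since $-4\cdot 10 = -40\equiv 12\equiv -1$, wait — $3\equiv -4t\bmod{13}$ forces $4t\equiv -3\equiv 10$, $t\equiv 10\cdot 4^{-1}$; $4^{-1}\equiv 10\bmod{13}$, so $t\equiv 100\equiv 9\bmod{13}$). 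So Lemma~\ref{lem.ct} yields $c\le 1/t_3$, hence $c\le 1/9$ at the index-$13$ point (and $c\le 1$ at the index-$7$ point).

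\textbf{First step: locate the center $P=f(\tilde E)$ and pin down $\alpha$.} Because $\dim|A|=-1$ and $\dim|2A|=1$, the system $\sM=|3A|$ has no fixed components (it is movable by construction) and $\mathrm{Bs}\,\sM$ meets the non-Gorenstein locus; the crepant blowup $f$ realizing the canonical threshold must have its center over a non-Gorenstein point, since a terminal Gorenstein point cannot produce crepant non-terminal behavior for $-K_X$-proportional systems of this codimension — more precisely, at a Gorenstein point $\ct(X,\sM)$ would be at least $1$ and the standard classification of such blowups (Kawakita) would force a different numerical profile than \eqref{c.r.4} allows via \eqref{eq.key}. So $P$ is the point of index $7$ or of index $13$. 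In either case, by the structure of divisorial contractions to points on cyclic quotient singularities (Kawamata's blowup of a $\frac1r(1,-1,b)$ point has discrepancy $1/r$, and weighted blowups give discrepancies $a/r$), we have $r_P\alpha\in\bZ_{>0}$, combined with $q\beta_3-3\alpha\in\bZ$ from \eqref{eq.twoeq} and $\beta_3-t_3\alpha\in\bZ$ from Lemma~\ref{l.localindex-integer}.

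\textbf{Second step: derive $\beta_3\ge 6\alpha$.} Near $P$ the pair $(X,c\sM)$ has log discrepancy $\alpha-c\,\beta_3+1=1$ along $\tilde E$, i.e. $\alpha=c\,\beta_3$, so $\beta_3=\alpha/c\ge t_3\,\alpha$ using $c\le 1/t_3$ from Lemma~\ref{lem.ct}. If $P$ has index $13$ then $t_3=9$, giving $\beta_3\ge 9\alpha>6\alpha$; if $P$ has index $7$ then $t_3=1$ and this bound alone is too weak, so in that case I must argue that $f$ is in fact a blowup with $\alpha$ small (e.g.\ $\alpha=a/7$) while the Bertini-type behavior of $|3A|$ — which has $\dim|3A|=3$ and whose general member is an irreducible surface by Lemma~\ref{l.invariantsofX}\eqref{item.irreandreduced} — forces a large multiplicity along $\tilde E$, hence a large $\beta_3$; quantitatively one compares $\tilde\sS_3^2\cdot(-K_{\tilde X})$ or uses the inequality $\beta_3\ge$ (the weighted multiplicity of $|3A|$ at $P$) and checks against the finitely many admissible weighted blowups of a $\frac17(1,-1,2)$ point that $\beta_3\ge 6\alpha$ in all cases. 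This case analysis over the (short) list of extremal divisorial contractions to the index-$7$ point is where the real work lies.

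\textbf{Third step: $d\ge 2$ when $\hat f$ is birational.} Here $F\sim dA$ with $F=f(\tilde F)$ the image of the $\hat f$-exceptional divisor, $d\in\bZ_{\ge0}$. If $d=0$ then $\tilde F$ is $f$-exceptional, forcing $\tilde F=\tilde E$ and a very restrictive link; feeding $d=0$ (hence $e\mid d$ gives $e=0$, contradicting Proposition~\ref{prop.sarkisov}\eqref{i1.p-sarkisov}) already rules this out. If $d=1$ then $F$ is the (unique, since $\dim|A|=-1$ forces $|A|=\varnothing$) — but $|A|=\varnothing$ means there is \emph{no} effective divisor $\sim A$, so $d=1$ is impossible outright. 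Hence $d\ge 2$. I expect this last step to be short; the main obstacle is the index-$7$ subcase of the second step, where $c\le 1/t_3$ with $t_3=1$ gives nothing and one must instead run the finite check of admissible Mori divisorial contractions over the $\frac17(1,-1,2)$ point (using the classification of Kawamata/Kawakita and the constraint $7\alpha\in\bZ$, $4\beta_3-3\alpha\in\bZ$, $\beta_3-\alpha\in\bZ$) to confirm $\beta_3\ge6\alpha$ — or, alternatively, to combine \eqref{eq.key} for $k=1,2,3$ with the dimension data $\dim|kA|$ from Lemma~\ref{l.invariantsofX} to squeeze out the bound without a full case list.
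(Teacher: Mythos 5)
There is a genuine gap, and it originates in a sign error. Writing $-K_X\sim 4A$, the local class group at a point of index $r$ is generated by $-K_X$, so $kA\sim -t_kK_X$ is determined by $4t_k\equiv k\pmod r$, \emph{not} by $k\equiv -4t_k\pmod r$ as you wrote. At the index-$7$ point this gives $t_1=2$ and $t_3=6$ (not $t_3=1$), and at the index-$13$ point it gives $t_3=4$ (not $t_3=9$). With the correct values the lemma is immediate: Lemma~\ref{lem.ct} applied at the index-$7$ point, where $\sS_3\sim -6K_X$, gives $c\leq 1/6$; since $f$ is $(K+c\sM)$-crepant, \eqref{eq.twoeq} gives $\alpha=c\beta_3$ (as you correctly note in your second step), so $\beta_3=\alpha/c\geq 6\alpha$. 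This is exactly the paper's proof. Your sign error flips which point is useful: you conclude the index-$7$ point gives nothing and are driven into an unnecessary and, as written, incomplete case analysis of weighted blowups of the $\tfrac17(1,-1,2)$ point --- the part you yourself flag as ``where the real work lies'' is never carried out, and it is precisely the case that the correct computation handles in one line.

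A second, conceptual issue: your first step tries to locate the centre $f(\tilde E)$ and asserts it must lie over a non-Gorenstein point. This is both unjustified (the paper explicitly treats the case where $f(\tilde E)$ is a curve or a Gorenstein point later in Section~5.2) and unnecessary: Lemma~\ref{lem.ct} bounds the \emph{global} canonical threshold $c=\ct(X,\sM)$ using \emph{any} point of local index $r>1$, irrespective of where the centre of the crepant blowup sits. So no case division on the centre is needed for this lemma. Your third step ($d\geq 2$ from $|A|=\varnothing$ together with $d/e\in\bZ_{>0}$ from Proposition~\ref{prop.sarkisov}\eqref{i1.p-sarkisov}) is correct and matches the paper.
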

	
	\begin{proof}
		We apply Lemma~\ref{lem.ct} with $P$ being the point of local index $7$ on $X$, where $c\leq 1/6$ as $\sS_3\sim -6K_X$ near the point $P$. Since $c=\alpha/\beta_3$ by \eqref{eq.twoeq}, the first inequality follows.  For the second statement, note that $\dim|A|=-1$, so we have $d\geq 2$ if $\hat{f}$ is birational. 
	\end{proof}

	The following observation can be easily derived from the definition and Lemma~\ref{l.invariantsofX}. It will play a key role in the proof of Theorem~\ref{t.qQ=4}.
	
	\begin{lem}
		\label{l.hatS2andS3}
		Any element in $\bar{\sS}_2$ $($resp.\ $\bar{\sS}_3)$ is of the form $\bar{\Delta}+a\bar{E}$, where $a$ is a non-negative integer and $\bar{\Delta}$ is the strict transform of an element $\Delta$ in $\sS_2$ $($resp.\ $\sS_3)$. In particular, the divisor $\bar{\Delta}$ is reduced and irreducible. Moreover, if $\hat{f}$ is birational and $\hat{f}_*\bar{\Delta}=0$, then $\bar{\Delta}=\bar{F}$ and $\Delta=F$.
	\end{lem}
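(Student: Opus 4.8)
The statement is essentially a bookkeeping exercise with strict transforms, so the plan is to unwind the definition of $\bar\sS_k$ for $k\in\{2,3\}$ through the two birational maps $f\colon\tilde X\to X$ and $\chi\colon\tilde X\dashrightarrow\bar X$. The structural facts I would lean on are that $f$ is a divisorial Mori contraction whose exceptional locus is the prime divisor $\tilde E$, with center of codimension at least two in the threefold $X$, while $\chi$, being a composition of flips, is an isomorphism in codimension one; hence $\chi$ induces a bijection between prime divisors on $\tilde X$ and on $\bar X$ that preserves $\bQ$-linear equivalence and effectivity and matches $\tilde E$ with $\bar E$.

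For the decomposition, take a member $D$ of $\bar\sS_k$ and write $D=\bar\Delta_0+a\bar E$, where $a\in\bZ_{\geq 0}$ is the multiplicity of the prime divisor $\bar E$ in the integral effective Weil divisor $D$ and $\bar\Delta_0$ has no $\bar E$-component. Transporting $D$ back to $\tilde X$ along $\chi$ gives a member $\tilde D=\tilde\Delta_0+a\tilde E$ of $\tilde\sS_k$; by \eqref{eq.twoeq} one has $\tilde D\sim_\bQ f^*(kA)-\beta_k\tilde E$, so pushing forward by $f$ and using $f_*f^*(kA)=kA$ and $f_*\tilde E=0$ yields $f_*\tilde D\sim_\bQ kA$. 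Since $\Cl(X)$ is torsion-free and numerical equivalence coincides with $\bQ$-linear, hence with linear, equivalence for $\bQ$-Cartier Weil divisors, the effective divisor $\Delta\coloneqq f_*\tilde D=f_*\tilde\Delta_0$ lies in $|kA|=\sS_k$; and as $f$ contracts only $\tilde E$, the divisor $\tilde\Delta_0$, having no $\tilde E$-component, is the strict transform of $\Delta$, so $\bar\Delta_0=\chi_*\tilde\Delta_0$ is the strict transform $\bar\Delta$ of $\Delta$ on $\bar X$, as asserted.

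For reducedness and irreducibility, I would invoke Lemma~\ref{l.invariantsofX}, which (via $\dim|A|=-1$ and the torsion-freeness of $\Cl(X)$) guarantees that every member of $|2A|$ and of $|3A|$ is reduced and irreducible; since $\Delta$ is a prime divisor not contained in the center of $f$, its strict transform $\tilde\Delta$ is again a prime divisor, and this is preserved by the small map $\chi$, so $\bar\Delta$ is a prime divisor. For the last assertion, assume $\hat f$ is birational (a type-I link) and $\hat f_*\bar\Delta=0$; then the prime divisor $\bar\Delta$ is contracted by the divisorial Mori contraction $\hat f$, hence lies inside its irreducible exceptional divisor $\bar F$, and two prime divisors one contained in the other coincide, so $\bar\Delta=\bar F$; transporting back through $\chi$ and $f$ gives $\tilde\Delta=\tilde F$ and $\Delta=f_*\tilde F=F$.

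I do not anticipate a real obstacle: once the Sarkisov-link set-up of Section~\ref{sec.sarkisov} is available, the argument is purely formal. The only points demanding mild care are producing $a$ as an honest non-negative integer — dealt with by reading it off directly as the $\bar E$-coefficient of the integral effective divisor $D$ rather than as a difference of pullback multiplicities — and upgrading the class identity $f_*\tilde D\sim_\bQ kA$ to a genuine linear equivalence, for which the torsion-freeness of $\Cl(X)$ is exactly the needed input; the specific numerical type from Lemma~\ref{c.QFano3foldscandidates} enters only through the reduced-and-irreducible statement of Lemma~\ref{l.invariantsofX}.
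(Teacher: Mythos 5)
Your proof is correct and matches the paper's intention exactly: the paper omits the argument, stating only that the lemma "can be easily derived from the definition and Lemma~\ref{l.invariantsofX}", and your write-up is precisely that derivation (peel off the $\bar{E}$-coefficient, transport through the small map $\chi$, push forward by $f$, use torsion-freeness of $\Cl(X)$ to land in $|kA|$, and quote Lemma~\ref{l.invariantsofX}\eqref{item.irreandreduced} for irreducibility). No gaps worth flagging.
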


    The remainder of this section is dedicated to the proof of Theorem~\ref{t.qQ=4}, which is conducted on a case-by-case basis in accordance with the type of $f(\tilde{E})$. The proof concludes by demonstrating that no instance of $f(\tilde{E})$ can be realised.
	
	\subsection{The image $\boldsymbol{f(\tilde{E})}$ is a curve or a Gorenstein point}
	
	If $f(\tilde{E})$ is either a curve or a Gorenstein point of $X$, then $\alpha$ and $\beta_3$ are integers, and then it follows from \eqref{eq.key} and Lemma~\ref{lem.abforq4}  that
	\begin{equation}
     \label{eq.q5Gorenstein}
		3\hat q=4s_3+(4\beta_3-3\alpha)e\geq 4s_3+21\alpha e.
	\end{equation}
	In particular, we get $\hat q\geq 7\alpha e\geq 7$, and therefore it follows from Proposition~\ref{prop.sarkisov} that the morphism $\hat f$ is birational. Moreover, as $\dim\hat{\sS}_3=3$, we get $s_3>0$ by Proposition~\ref{prop.sarkisov} and hence $\hat q\geq 9$. 
	By Proposition~\ref{p.torsion}, the group $\Cl(\hat X)$ is torsion-free. Thus $e=d=2$ and $\alpha=1$ by Proposition~\ref{prop.sarkisov} as $\hat{q}\leq 19$. So $\hat{q}=17$ or $\hat{q}=19$ by Lemma~\ref{l.indexQFano}. However, there are no integral solutions for \eqref{eq.q5Gorenstein} in these two cases, so we have a contradiction.

	\subsection{The image $\boldsymbol{f(\tilde{E})}$ is a point of local index 13}
	
	If $f(\tilde{E})$ is a point of local index $13$, then $\alpha=1/13$ by \cite{Kawamata1996}.
	Moreover, as $A\sim -10K_X$ near the point $f(\tilde{E})$, we obtain $t_1=10$, $t_2=7$ and $t_3=4$. Then it follows from \eqref{e.localindex-integer} and Lemma~\ref{lem.abforq4} that $\beta_2=7/13+m_2$ for some $m_2\in \bZ_{\geq 0}$ and $\beta_3=4/13+m_3$ for some $m_3\in \bZ_{>0}$. Applying \eqref{eq.key} to $k=3$ yields 
	\begin{equation}
		\label{e.index13keyeq}
		3\hat q=4s_3+(4m_3+1)e=4(s_3+m_3e)+e.
	\end{equation}
	If $\hat{q}\geq 4$, by Proposition~\ref{prop.sarkisov}, the morphism $\hat{f}$ is birational and $s_3>0$ as $\dim\hat{\sS}_3 = 3$. Then we derive from \eqref{e.index13keyeq} that $\hat{q}\not= 4$ or $5$; the possibilities for $\hat{q}=6$ and $7$ are listed  below. If $\hat q\geq 8$, then $\Cl(\hat X)$ is torsion-free by Proposition~\ref{p.torsion}, so $e=d\geq 2$ and $\dim |s_3A_{\hat X}|\geq \dim \hat \sS_3 = 3$ by Proposition~\ref{prop.sarkisov}. Combining these facts with \eqref{e.index13keyeq} and Table~\ref{table.qgeq6} yields the corresponding possible values of $s_3$ for each $\hat q$. In summary, the possibilities for $(\hat q, e, s_3)$ are as follows:
	\begin{itemize}
		\item $\hat q=3$, $e=1$ and $0\leq s_3\leq 1$;
		\item $\hat q=6$, $e=2$ and $s_3=2$;
		\item $\hat q=7$, $e=1$ and $1\leq s_3\leq 4$;
		\item $\hat q=13$, $e=3$ and $s_3=6$;
		\item $\hat q=17$, $e=3$ and $s_3=9$.
	\end{itemize}
	This case will be treated on a case-by-case basis according to the value of $\hat{q}$.
	
	\subsubsection{The case $\boldsymbol{\hat{q}=3}$ and $\boldsymbol{e=1}$}
	
	In this case, we have $0\leq s_3\leq 1$ and $s_2=1-m_2\leq 1$ by \eqref{eq.key}. Now we divide the proof into two subcases according to the type of $\hat{f}$.
	
	
	\paragraph{\textit{The subcase where $\hat f$ is birational}}
	
	As $d=|\Cl(\hat{X})_t|\geq 2$ by Proposition~\ref{prop.sarkisov}, the group $\Cl(X)$ is not torsion-free. Thus we get a contradiction by Lemma~\ref{l.q=3dimensionofA} as $s_3=1$ and $\dim\hat{\sS}_3=3$.
	
	
	\paragraph{\textit{The subcase where $\hat{f}$ is not birational}}
	\label{para.hatfnotbirationl}
	
	Then the general fibre $\bar{F}$ of $\hat{f}$ is isomorphic to $\bP^2$ and $\hat{X}\cong \bP^1$ by Proposition~\ref{prop.sarkisov}. Moreover, if $\bar{\sS}_3$ is $\hat f$-vertical, then $\bar{\sS}_3$ consists of fibres of $\hat f$, which contradicts $\dim\bar{\sS}_3=3$ and Lemma~\ref{l.hatS2andS3}. So we obtain $s_3=1$. Consider the restriction map
	\[
	\bar{\sS}_3 \longrightarrow \bar{\sS}_3|_{\bar{F}} \subset |\sO_{\bP^2}(1)|.
	\]
	As $\dim|\sO_{\bP^2}(1)|=2$, $\dim \bar{\sS}_3=3$ and $e=s_3=1$, we get $\bar{F}+\bar{E} \in \bar{\sS}_3$ by Lemma~\ref{l.hatS2andS3}, so $\hat{f}^*|\sO_{\bP^1}(1)|+\bar{E}\sim \bar{\sS}_3$. Pushing it forward to $X$ yields $
	f_*\chi^*\hat{f}^*|\sO_{\bP^1}(1)| \subset \sS_3$, which implies $\hat{f}^*|\sO_{\bP^1}(1)| + \bar{E} \subset \bar{\sS}_3$. Then it follows from Lemma~\ref{l.hatS2andS3} that the (cycle-theoretic) fibres of $\hat{f}$ are reduced and irreducible. 
	
	We claim that $s_2=1$. Assume to the contrary that $s_2=0$. Since $\dim\bar{\sS}_2=1$ and its general elements are reduced and irreducible, we have $\bar{F}\in \bar{\sS}_2$. This implies that $F\in \sS_2\cap \sS_3$, which is absurd. So $s_2=1$ and the group $\Cl(\bar{X})$ is generated by $\bar{S}_2$ and $\bar{F}$, where $\bar{S}_2$ is a general element in $\bar{\sS}_2$. In particular, as $s_3=1$, there exists an integer $c$ such that $\bar{\sS}_3\sim \bar{S}_2+c\bar{F}$. Then pushing it forward to $X$ shows $A\sim \sS_3 - \sS_2 \sim cF \sim 3c A$, which is impossible.
	
	
	\subsubsection{The case $\boldsymbol{\hat{q}=6}$, $\boldsymbol{e=2}$ and $\boldsymbol{s_3=2}$}

	Then $qW(\hat{X})=q\bQ(\hat{X})=6$ by Lemma~\ref{l.indexQFano}, and the numerical type of $\hat{X}$ appears as \textnumero\,1 in Table~\ref{table.qgeq6}, so $\Cl(\hat X)$ is torsion-free by Proposition~\ref{p.torsion}. In particular, we have $\dim \sS_3=\dim \hat \sS_3=\dim |2A_{\hat X}|=3$. So $\hat{\sS}_3=|2A_{\hat{X}}|$ and thus $2G\in \hat{\sS}_3$ for every $G\in |A_{\hat{X}}|$. This contradicts Lemma~\ref{l.hatS2andS3} as $\dim|A_{\hat{X}}|=1$ and $e=s_3=2$.    
	
	\subsubsection{The case $\boldsymbol{\hat{q}=7}$, $\boldsymbol{e=1}$ and $\boldsymbol{s_3\leq 4}$}\label{ss.hatq=7ande=1}
	
	Since $d/e=d\geq 2$ is the order of the torsion subgroup of $\Cl(\hat X)$, the numerical type of $\hat{X}$ appears as either \textnumero\,6 or \textnumero\,7 in Table~\ref{table.qgeq5} by Proposition~\ref{p.torsion}. Then one obtains a contradiction from Table~\ref{table.qgeq5} as $\dim\hat{\sS}_3=3$ and $1\leq s_3\leq 4$.
	
	
	\subsubsection{The case $\boldsymbol{\hat{q}=13}$, $\boldsymbol{e=3}$ and $\boldsymbol{s_3=6}$}
	
	In this case, the numerical type of $\hat{X}$ appears as \textnumero\,28 in Table~\ref{table.qgeq6}, and $\Cl(\hat X)$ is torsion-free by Proposition~\ref{p.torsion}. As $e=3$ and $\dim|A_{\hat{X}}|=0$, the unique element $D\in |A_{\hat{X}}|$ is a prime divisor which is different from $\hat{E}$. Moreover, note that we have
	\[
	6=\dim |5A_{\hat{X}}| + \dim\hat{\sS}_3 \geq \dim|s_3A_{\hat{X}}|=\dim|6A_{\hat{X}}|=4.
	\] 
	In particular, since both $D+|5A_{\hat{X}}|$ and $\hat{\sS}_3$ are sublinear systems of $|6A_{\hat{X}}|$, there must exist an element in $\hat{\sS}_3$ of the form $D+G$ for some $G\in |5A_{\hat{X}}|$. This contradicts Lemma~\ref{l.hatS2andS3} as $e=3$.
	
	
	\subsubsection{The case $\boldsymbol{\hat{q}=17}$, $\boldsymbol{e=3}$ and $\boldsymbol{s_3=9}$}

	The numerical type of $\hat{X}$ appears as \textnumero\,30 in Table~\ref{table.qgeq6}, and $\Cl(\hat X)$ is torsion-free by Proposition~\ref{p.torsion}. As $e=3$ and $\dim|A_{\hat{X}}|=-1$, the unique element $D\in |2A_{\hat{X}}|$ is a prime divisor which is different from $\hat{E}$. As $\dim\hat{\sS}_3=\dim|9A_{\hat{X}}|=3$, we get $\hat{\sS}_3=|9A_{\hat{X}}|$. So the linear system $\hat{\sS}_3$ contains the divisor $D+G$ for any $G\in |7A_{\hat{X}}|$. This contradicts Lemma~\ref{l.hatS2andS3} as $e=3$ and $\dim|7A_{\hat{X}}|=2$.
	
	\subsection{The image $\boldsymbol{f(\tilde{E})}$ is a point of local index 7}
	
	If $f(\tilde{E})$ is a point of local index $7$, then $\alpha=1/7$ by \cite{Kawamata1996}. Moreover, as $A\sim -2K_X$ near the point $f(\tilde{E})$, we obtain $t_1=2$, $t_2=4$ and $t_3=6$. Then it follows from \eqref{e.localindex-integer} that $\beta_2=4/7+m_2$ for some $m_2\in \bZ_{\geq 0}$ and $\beta_3=6/7+m_3$ for some $m_3\in \bZ_{\geq 0}$. Applying \eqref{eq.key} to $k=3$ yields 
	\[
	3\hat q=4s_3+(4m_3+3)e=4(s_3+m_3e)+3e.
	\]
	As $\hat{q}\leq 19$ by Lemma~\ref{l.indexQFano}, we obtain $\hat{q}-e=4l$ for some integer $0\leq l\leq 4$. Moreover, if $\hat q\geq 8$, then $\Cl(\hat X)$ is torsion-free by Proposition~\ref{p.torsion}, so $e=d\geq 2$ and $\dim|s_3 A_{\hat{X}}| \geq \dim \hat{\sS}_3=3$ by Proposition~\ref{prop.sarkisov}. Thus it follows from Table~\ref{table.qgeq6} that the possibilities for  $(\hat q, e, s_3)$ are the following:
	\begin{itemize}
		\item $\hat q=e$, $s_3=0$;
		\item $\hat q=4+e$, $1\leq e\leq 4$ and $1\leq s_3\leq 3$;
		\item $\hat q=11$, $e=3$ and $s_3=6$;
        \item $\hat q=13$, $e=5$ and $s_3=6$;
		\item $\hat q=17$, $e=5$ and $s_3=9$;
		\item $\hat q=19$, $e=3$ and $s_3=12$.
	\end{itemize}
	In the following, we consider each case in accordance with the value of $\hat{q}$.
	
	\subsubsection{The case $\boldsymbol{\hat{q}=e}$ and $\boldsymbol{s_3=0}$} 
	
	As $\dim \sS_3=3$ and  $s_3=0$, it follows from Proposition~\ref{prop.sarkisov} that $\hat{f}$ is not birational and hence $\hat q=e\leq 3$. Since $\Cl(X)$ is torsion-free, the group $\Cl(\hat{X})$ is so by \cite[Section~2.3]{Prokhorov2013};   we  denote by $A_{\hat{X}}$ its ample generator. Since $\bar\sS_3$ is $\hat{f}$-vertical and $\dim\bar \sS_3=3$, it follows that a general member $\bar{D}$ in $\bar \sS_3$ is the pull-back of a divisor on $\hat{X}$ as $\bar{D}$ is reduced and irreducible by Lemma~\ref{l.hatS2andS3}. Thus we have $\bar{\sS}_3=\hat{f}^*|cA_{\hat{X}}|$ for some $c\in\bZ_{>0}$. Then it follows from Lemma~\ref{l.hatS2andS3} that the elements in $|cA_{\hat{X}}|$ are reduced and irreducible. In particular, as $\dim|A_{\hat{X}}|\geq 0$, we obtain $c=1$ and then $\hat{X}\cong \bP^2$ by \cite[Section~2.3]{Prokhorov2013} as $\dim|\bar\sS_3|=3$. Note that $s_2=0$ by \eqref{eq.key}. In particular, since $\dim\bar{\sS}_2=1$ and the elements in $\bar{\sS}_2$ are reduced and irreducible, applying the same argument as above yields $\bar{\sS}_2=\hat{f}^*|A_{\hat{X}}|$. This is impossible as $\dim|A_{\hat{X}}|=3$.
	
	
	\subsubsection{The case $\boldsymbol{\hat{q}=4+e}$ and $\boldsymbol{1\leq s_3\leq 3}$}
	
	In this case, Proposition~\ref{prop.sarkisov} implies that the morphism $\hat{f}$ is birational and thus $s_3\geq 1$ as $\dim\sS_3=3$. If $\hat{q}\geq 8$, then $\Cl(\hat{X})$ is torsion-free by Proposition~\ref{p.torsion}, so $\dim|s_3A_{\hat{X}}| \geq \dim\hat{\sS}_3 = 3$. Then it follows from Table~\ref{table.qgeq6} that $\hat{q}\leq 8$ and hence $e\leq 4$. We divide the proof into four subcases according to the value of $e$.
	
	
	\paragraph{\textit{The subcase $e=4$}} 
	
	Then $\hat{q}=8$, $s_3=3$, and the numerical type of $\hat{X}$ appears as \textnumero\,19 in Table~\ref{table.qgeq6}. In particular, we have $\dim|3A_{\hat{X}}|=\dim\hat{\sS}_3=3$ and thus $|3A_{\hat{X}}|=\hat{\sS}_3$. As $e=4$ and $\dim|A_{\hat{X}}|=0$, the unique element $D\in |A_{\hat{X}}|$ is a prime divisor which is different from $\hat{E}$. So we have $3D\in\hat{\sS}_3$, which contradicts Lemma~\ref{l.hatS2andS3} as $e=4$.
	
	
	\paragraph{\textit{The subcase $e=3$}}
	
	Then $\hat{q}=7$ and $s_3=3$. In particular, as $\dim\hat{\sS}_3=3$, it follows from Table~\ref{table.qgeq5} that $\Cl(\hat{X})$ is torsion-free and thus $\dim|3A_{\hat{X}}|\geq \dim\hat{\sS}_3=3$. So the numerical type of $\hat{X}$ appears as one of \textnumero\,7, \textnumero\,8 and \textnumero\,9 in Table~\ref{table.qgeq6}. In all these three cases, we have $\dim |A_{\hat{X}}|\geq 0$. In particular, as $e=3$, there exists a prime divisor $D\in |A_{\hat{X}}|$ which is different from $\hat{E}$. Moreover, in all these three cases we have $\dim|2A_{\hat{X}}| + \dim\hat{\sS}_3 \geq \dim|3A_{\hat{X}}|$.	So there exists an element in $\hat{\sS}_3$ of the form $D+G$ for some $G\in |2A_{\hat{X}}|$, which contradicts Lemma~\ref{l.hatS2andS3} as $e=3$.

	
	\paragraph{\textit{The subcase $e=2$}}
	
	Then $\hat{q}=6$ and $s_3=1$ or $3$. By Proposition~\ref{p.torsion}, the group $\Cl(\hat{X})$ is torsion-free and thus $\dim|s_3A_{\hat{X}}|\geq \dim\hat{\sS}_3=3$. It follows that $s_3=3$ and the numerical type of $\hat{X}$ appears as \textnumero\,1 in Table~\ref{table.qgeq6}. On the other hand, we have $s_2=2$ by \eqref{eq.key}. As $\dim|A_{\hat{X}}|=1$, the image of the natural map 
    \[
    |A_{\hat{X}}|\times |A_{\hat{X}}| \longrightarrow |2A_{\hat{X}}|,\quad (D,D')\longmapsto D+D'
    \]
    is a two-dimensional sublinear system of $|2A_{\hat{X}}|$. In particular, since $\dim \hat{\sS}_2=1$ and $\dim |2A_{\hat{X}}|=3$, there must exist an element in $\hat{\sS}_2$ of the form $D+D'$ with $D$, $D'\in |A_{\hat{X}}|$. This contradicts Lemma~\ref{l.hatS2andS3} as $e=2$.
	
	
	\paragraph{\textit{The subcase $e=1$}}
	
	Then $\hat{q}=5$ and $1\leq s_2\leq 2$ by \eqref{eq.key}, so $qW(\hat{X})=q\bQ(\hat{X})=5$ by Lemma~\ref{l.indexQFano}. Let $A_{\hat{X}}$ be a Weil divisor on $\hat{X}$ such that $-K_{\hat{X}}\sim 5 A_{\hat{X}}$. As $d\geq 2$, the group $\Cl(\hat{X})$ is not torsion-free by Proposition~\ref{prop.sarkisov}. In particular, as $\dim \hat{\sS}_2=1$ and $1\leq s_2\leq 2$, it follows from Table~\ref{table.qgeq5} that $s_2=2$, $d=2$ and the numerical type of $\hat{X}$ appears as one of \textnumero\,4 and \textnumero\,5 in Table~\ref{table.qgeq5}. On the other hand, since $\Cl(X)$ is torsion-free, we have $F\in \sS_2$. So there exists a non-negative integer $a$ such that $\bar{F}+a\bar{E}\in \bar{\sS}_2$. Pushing it forward to $\hat{X}$ shows that $a=s_2=2$ as $e=1$, \textit{i.e.}, $2\hat{E}\in \hat{\sS}_2$. This yields $\hat{\sS}_2\sim 2\hat{E}\sim 2A_{\hat{X}}$ as $d=|\Cl(\hat{X})_t|=2$. In particular, as $\dim\hat{\sS}_2=\dim|2A_{\hat{X}}|=1$, we must have $\hat{\sS}_2=|2A_{\hat{X}}|$. On the other hand, by Table~\ref{table.qgeq5}, we also have $\dim|A_{\hat{X}}|=\dim|A_{\hat{X}}+T|=0$.	Let $D$ and $D'$ be the unique elements in $|A_{\hat{X}}|$ and $|A_{\hat{X}}+T|$, respectively. Then $D$ and $D'$ are distinct prime divisors. However, since $2D$ and $2D'$ are contained in $\hat{\sS}_2$, it follows from Lemma~\ref{l.hatS2andS3} that we must have $D=D'=\hat{E}$, which is impossible.
	
	\subsubsection{The case $\boldsymbol{\hat{q}=11}$, $\boldsymbol{e=3}$ and $\boldsymbol{s_3=6}$}
	
	The numerical type of $\hat{X}$ appears as one of \textnumero 25 and \textnumero 26 in Table~\ref{table.qgeq6}. As $e=3$, the unique element $D\in |A_{\hat{X}}|$ is a prime divisor which is different from $\hat{E}$. Moreover, note that $\dim|5A_{\hat{X}}| + \dim\hat{\sS}_3 > \dim|6 A_{\hat{X}}|$, so there exists an element in $\hat{\sS}_3$ of the form $D+G$ for some $G\in |5A_{\hat{X}}|$, which contradicts Lemma~\ref{l.hatS2andS3} as $e=3$.

    \subsubsection{The case $\boldsymbol{\hat{q}=13}$, $\boldsymbol{e=5}$ and $\boldsymbol{s_3=6}$}

    The numerical type of $\hat{X}$ appears as \textnumero\,28 in Table~\ref{table.qgeq6}. Then applying \eqref{eq.key} to $k=2$ yields $s_2=4$. Moreover, as $e=5$, the unique element $D\in |A_{\hat{X}}|$ is a prime divisor which is different from $\hat{E}$. Since $2=\dim|3A_{\hat{X}}| + \dim\hat{\sS}_2 = \dim|4A_{\hat{X}}|$, there exists an element in $\hat{\sS}_2$ of the form $D+G$ for some $G\in |3A_{\hat{X}}|$, which contradicts Lemma~\ref{l.hatS2andS3} as $e=5$.
	
	\subsubsection{The case $\boldsymbol{\hat{q}=17}$, $\boldsymbol{e=5}$ and $\boldsymbol{s_3 = 9}$}
	
	The numerical type of $\hat{X}$ appears as \textnumero\,30 in Table~\ref{table.qgeq6}, and $\Cl(X)$ is torsion-free by Proposition~\ref{p.torsion}. Thus $|9A_{\hat{X}}|=\hat{\sS}_3$ as $\dim|9A_{\hat{X}}|=\dim\hat{\sS}_3=3$. As $e=5$ and $\dim|A_{\hat{X}}|=-1$, the unique element $D\in |2A_{\hat{X}}|$ is a prime divisor which is different from $\hat{E}$. So $\hat{\sS}_3=|9A_{\hat{X}}|$ contains the divisor of the form $D+G$ for any $G\in |7A_{\hat{X}}|$. This contradicts Lemma~\ref{l.hatS2andS3} as $e=5$ and $\dim|7A_{\hat{X}}|=2$.
	
	
	\subsubsection{The case $\boldsymbol{\hat{q}=19}$, $\boldsymbol{e=3}$ and $\boldsymbol{s_3=12}$} 
	
	Then applying \eqref{eq.key} to $k=2$ yields $s_2=8$ or $s_2\leq 5$. Since $\Cl(\hat{X})$ is torsion-free by Proposition~\ref{p.torsion}, we have $\dim|s_2 A_{\hat{X}}| \geq \dim\hat{\sS}_2 =1$ by Proposition~\ref{prop.sarkisov}.
	This implies that $s_2=8$ from Table~\ref{table.qgeq6}. In particular, we have $\dim|8A_{\hat{X}}|=\dim\hat{\sS}_2=1$ and thus $|8 A_{\hat{X}}|=\hat{\sS}_2$. As $e=3$ and $\dim|kA_{\hat{X}}|=-1$ for $k\leq 2$, the unique element $D\in |4A_{\hat{X}}|$ is a prime divisor which is different from $\hat{E}$. So  $2D\in \hat{\sS}_2$, which contradicts Lemma~\ref{l.hatS2andS3} as $e=3$.

	\section{Case \texorpdfstring{$\boldsymbol{q\bQ(X)=8}$}{qQ(X)=8}}
	\label{s.qQ=8}
	
	This section is devoted to proving the following result.
	
	\begin{thm}
		\label{t.qQ=8}
		Case \textnumero\,22 in Table~\ref{table.qgeq6} does not occur for terminal $\bQ$-Fano threefolds.
	\end{thm}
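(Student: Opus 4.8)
The plan is to exclude case~\textnumero\,22 --- a terminal $\bQ$-Fano threefold $X$ with $q\bQ(X)=8$ and $\cR_X=\{3,5,11\}$ --- by means of the Sarkisov link machinery of Section~\ref{sec.sarkisov}, proceeding exactly as in the proof of Theorem~\ref{t.qQ=4}. First I would fix the geometry of $X$: since $q\bQ(X)\geq 8$, Lemma~\ref{l.indexQFano} gives $qW(X)=q\bQ(X)=8$ and $\Cl(X)$ is torsion-free, so $\Cl(X)\cong\bZ$ is generated by an ample Weil divisor $A$ with $-K_X\sim 8A$; moreover, by the form of the basket together with \cite{Mori1985} and \cite{Reid1987} (\emph{cf.}\ Lemma~\ref{l.invariantsofX}\eqref{item.non-Gorenstein}), the three non-Gorenstein points of $X$ are cyclic quotient singularities of local indices $3$, $5$ and $11$. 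Using the orbifold Riemann--Roch formula \eqref{eq.reid} and the relations \eqref{eq.range}, \eqref{eq.primvolume}, \eqref{eq.coprimeandint} with the numerical data of case~\textnumero\,22 one computes $A^3$ and the dimensions $\dim|mA|$ for small $m$ (in particular $\dim|A|\leq 0$), and records, as in Lemma~\ref{l.invariantsofX}\eqref{item.irreandreduced}, that the members of the relevant linear systems $|mA|$ are reduced and irreducible. Let $k$ be the smallest integer with $\dim|kA|\geq 1$; set $\sM\coloneqq\sS_k=|kA|$ and $c\coloneqq\ct(X,\sM)$. Since $c\leq 1$ --- apply Lemma~\ref{lem.ct} at the point of local index $11$, where $\sS_k$ has nonzero local index --- the divisor $-(K_X+c\sM)$ is ample, so the construction of Section~\ref{sec.sarkisov} applies and yields a Sarkisov link with crepant blowup $f\colon\tilde X\to X$, flip $\chi\colon\tilde X\dashrightarrow\bar X$ and Mori contraction $\hat f\colon\bar X\to\hat X$, together with the attached invariants $\alpha$, $\beta_k$, $\hat q$, $e$, $s_k$, $d$.

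Next I would pin down the link numerically. Lemma~\ref{lem.ct}, applied at the points of local indices $11$, $5$ and $3$ (whenever the local index $t_k(P)$ of $\sS_k$ at $P$ is nonzero), gives lower bounds $\beta_k\geq t_k(P)\,\alpha$, and the value of $\alpha$ is forced by the type of the centre $P=f(\tilde E)$: if $P$ is a curve or a Gorenstein point then $\alpha\in\bZ_{>0}$ and $\beta_k\in\bZ_{\geq 0}$, whereas if $P$ is the cyclic quotient point of local index $r\in\{3,5,11\}$ then $\alpha=1/r$ by \cite{Kawamata1996} and $\beta_k\equiv t_k(P)\,\alpha\pmod{\bZ}$ by Lemma~\ref{l.localindex-integer}. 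Substituting into the key equation \eqref{eq.key} with $q=8$, namely
\[
k\hat q=8s_k+(8\beta_k-k\alpha)e,
\]
and combining with $\hat q\leq 19$, $\gcd(r_{\hat X},\hat q)=1$, the implication that $\hat q\geq 4$ forces $\hat f$ to be birational (Proposition~\ref{prop.sarkisov}\eqref{i3.p-sarkisov}), and the fact that large $\hat q$ forces $\Cl(\hat X)$ to be torsion-free with $e=d$ (Proposition~\ref{p.torsion} and Proposition~\ref{prop.sarkisov}\eqref{i1.p-sarkisov}), one is left --- in each of the finitely many possibilities for $P$ --- with a short, explicit list of admissible triples $(\hat q,e,s_k)$.

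Finally, for each admissible triple I would derive a contradiction along the lines of the subsections of Section~\ref{s.qbQ=4}, tracking also $\dim|mA|$ through the link for the other small values of $m$. When $\hat f$ is birational, the inequality $\dim|s_k A_{\hat X}|\geq\dim\hat\sS_k=\dim|kA|$ (Proposition~\ref{prop.sarkisov}\eqref{i2.p-sarkisov}) together with Tables~\ref{table.qgeq6} and~\ref{table.qgeq5} determines $s_k$ and the numerical type of $\hat X$; then the structural lemma --- the analogue of Lemma~\ref{l.hatS2andS3}, which holds here because $\dim|A|\leq 0$ and $\Cl(X)$ is torsion-free --- that every member of $\bar\sS_k$ has the form $\bar\Delta+a\bar E$ with $a\in\bZ_{\geq 0}$ and $\bar\Delta$ the strict transform of a reduced irreducible member of $\sS_k$, rules the triple out, because the dimension count forces $\hat\sS_k$ to contain a reducible divisor $D+G$ with $D\in|mA_{\hat X}|$ prime and $0<m<e$. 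When $\hat f$ is not birational, $\bar F$ is $\bP^2$, a non-singular del Pezzo surface, or a non-singular rational curve, so $\hat q\leq 3$ and one argues via the $\hat f$-vertical/horizontal dichotomy and Proposition~\ref{prop.sarkisov}\eqref{i4.p-sarkisov}, exactly as in the cases of Section~\ref{s.qbQ=4} where $f(\tilde E)$ is a curve or a point of small index. I expect the principal obstacle to be the bulk of the case analysis when $P=f(\tilde E)$ is a non-Gorenstein point: there $\alpha=1/r$ makes \eqref{eq.key} admit many candidate triples --- the same effect that makes the local-index-$7$ case the longest part of the proof of Theorem~\ref{t.qQ=4} --- so each of the index-$3$, index-$5$ and index-$11$ points has to be broken into its own family of sub-sub-cases, each closed by a delicate linear-system argument on $\hat X$; a secondary technical point is to confirm at the outset that these points really are cyclic quotient singularities, so that the input $\alpha=1/r$ from \cite{Kawamata1996} is legitimate.
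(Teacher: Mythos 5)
Your strategy coincides with the paper's: run the Sarkisov link of Section~\ref{sec.sarkisov} with $\sM=\sS_4=|4A|$ (indeed $k=4$ is the smallest $k$ with $\dim|kA|\geq 1$), use \eqref{eq.key} together with Propositions~\ref{prop.sarkisov} and~\ref{p.torsion} and Lemma~\ref{lem.ct} to list the admissible triples $(\hat q,e,s_4)$ for each type of centre $f(\tilde E)$, and kill each one by a linear-system argument on $\hat X$. There are, however, two genuine gaps. The first is that your structural lemma is false as stated: not every member of $\sS_4=|4A|$ is reduced and irreducible. Since $\dim|2A|=0$, the unique $S_2\in|2A|$ yields the non-reduced member $2S_2\in|4A|$; the correct statement (Lemma~\ref{l.geometryqQ=8}\eqref{i.geometryqQ=8}, Lemma~\ref{l.elementsinMqQ=8}) is that a member of $|4A|$ is either $2S_2$ or prime. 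This is not cosmetic: the exceptional member $2S_2$ is used essentially in several subcases (when $f(\tilde E)$ has local index $5$, where $2\bar S_2$ is the unique non-prime fibre of the resulting fibration over $\bP^1$; and when $f(\tilde E)$ has local index $11$ with $\hat q=5$, where pushing $2S_2$ through the link produces $2\hat E\in\hat\sS_4$ and hence the contradiction). Conversely, your generic contradiction---``$\hat\sS_4$ must contain a reducible divisor $D+G$''---can evaporate, since that reducible member could a priori be the image of $2S_2$.

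The second gap is that your closing mechanism does not cover all admissible triples. The hardest subcase is $f(\tilde E)$ of local index $11$ with $\hat q=7$, $e=3$: there one must first derive $s_6=3$ and $\dim|3A_{\hat X}|\geq 3$, invoke the classification of $\bQ$-Fano threefolds of index $7$ with $\dim|-K|\geq 15$ from \cite[Theorem~1.1]{Prokhorov2016} to obtain the three explicit models $\bP(1^2,2,3)$, $X_6\subset\bP(1,2^2,3,5)$ and $X_6\subset\bP(1,2,3^2,4)$, and then treat each model separately; for $X_6\subset\bP(1,2,3^2,4)$ the argument requires analysing the second extraction $\hat f$ via the discrepancy relation \cite[(4.5)]{Prokhorov2022} to locate the point $\hat f(\bar F)$ and compare with \cite[Remark~5.1]{Prokhorov2016}. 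None of this is captured by the dimension-count template you describe, so the case analysis as proposed would not close.
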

	
	Assume to the contrary that there exists a terminal $\bQ$-Fano threefold $X$ whose numerical type appears as \textnumero\,22 in Table~\ref{table.qgeq6}. Then the numerical invariants of $X$ are as follows (\textit{cf.} \cite[\textnumero\,41495]{BrownKasprzyk2009}):
	\[
	\bfB_X=\{(3,1),(5,2),(11,4)\},\quad c_1(X)^3=\frac{2048}{165},\quad c_2(X)c_1(X)=\frac{928}{165}, \quad b_X=\frac{64}{29}.
	\]
	
	\subsection{Geometry of $\boldsymbol{X}$}
	
	We collect in the following some geometric properties of $X$ which will be used later in the proof of Theorem~\ref{t.qQ=8}.
	
	\begin{lem}
		\label{l.geometryqQ=8}
		Let $X$ be a terminal $\bQ$-Fano threefold with numerical type \textnumero\,22 in Table~\ref{table.qgeq6}.
		\begin{enumerate}
			\item Every non-Gorenstein point of\, $X$ is a cyclic quotient singularity.
			
			\item The Weil divisor class group $\Cl(X)$ is torsion-free.
			
			\item Let $A$ be a $\bQ$-Cartier Weil divisor on $X$ such that $-K_X\sim 8 A$. Then we have
			
			\renewcommand*{\arraystretch}{1.6}
			\begin{longtable*}{|M{2cm}|M{0.8cm}|M{0.8cm}|M{0.8cm}|M{0.8cm}|M{0.8cm}|M{0.8cm}|M{0.8cm}|M{0.8cm}|}
				\hline
				
				\multirow{2}{*}{$\cR_X$}
				& \multirow{2}{*}{$A^3$}
				& \multicolumn{6}{c|}{$\dim|kA|$}
				\\
				\cline{3-8}
				
				&
				& $|A|$
				& $|2A|$ 
				& $|3A|$
				& $|4A|$ 
				& $|5A|$
				& $|6A|$
				\\
				\hline

				$\{3,5,11\}$
				& $\frac{4}{165}$
				& $-1$
				& $0$
				& $0$
				& $1$
				& $2$
				& $3$
				\\
				\hline
			\end{longtable*}
			
			\item\label{i.geometryqQ=8} Let $S_2$ be the unique prime divisor in $|2A|$. Then an element $\Delta\in |4A|$ is either equal to $2S_2$ or a prime divisor. 
		\end{enumerate}
	\end{lem}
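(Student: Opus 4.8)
The plan is to treat the four assertions one at a time; the first three are bookkeeping and only~(4) requires a genuine argument, which is of the same nature as the last part of Lemma~\ref{l.invariantsofX}. For~(1), exactly as in Lemma~\ref{l.invariantsofX}, the claim follows from the shape of the basket $\bfB_X=\{(3,1),(5,2),(11,4)\}$ --- in which every index occurs with multiplicity one --- together with Mori's classification of terminal threefold singularities (\cite{Mori1985}; see also \cite[Sections~(6.1) and (6.4)]{Reid1987}), because a non-Gorenstein terminal point that is not a cyclic quotient would contribute several basket pairs sharing the same index. For~(2), since $q\bQ(X)=8$ we get $qW(X)=q\bQ(X)=8$ from Lemma~\ref{l.indexQFano}, and hence $\Cl(X)$ is torsion-free by \cite[Lemma 3.5]{Prokhorov2010}, as already recorded in the paragraph preceding Proposition~\ref{p.torsion}. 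For~(3), one first computes $A^3=\frac{4}{165}$ from \eqref{eq.range} and \eqref{eq.primvolume} (or reads it off from \cite[\textnumero\,41495]{BrownKasprzyk2009}), and then the values of $\dim|kA|=\chi(kA)-1$ for $k\geq1$ follow from Reid's orbifold Riemann--Roch formula \eqref{eq.reid} together with Kawamata--Viehweg vanishing, which gives $h^i(kA)=0$ for $i>0$ because $kA-K_X=(k+8)A$ is ample.

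The content is in~(4). By~(2) we have $\Cl(X)=\bZ\cdot A$, and since $-K_X\sim 8A$ with $qW(X)=8$ the divisor $A$ is the ample generator; in particular every non-zero effective divisor on $X$ has class $aA$ for some integer $a\geq1$, and since $\dim|A|=-1$ no effective divisor is linearly equivalent to $A$, so every prime divisor on $X$ has class $aA$ with $a\geq2$. First I would observe that the unique member $S_2$ of $|2A|$ (unique since $\dim|2A|=0$) is prime: writing $S_2=\sum_i m_i\Gamma_i$ with the $\Gamma_i$ distinct prime divisors, $\Gamma_i\sim a_iA$, the relation $\sum_i m_ia_i=2$ with every $a_i\geq2$ forces a single term, with $m_1=1$ and $a_1=2$. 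Then, given $\Delta\in|4A|$, I would write $\Delta=\sum_i m_i\Gamma_i$ in the same way, so that $\sum_i m_ia_i=4$ with each $a_i\geq2$; if two distinct primes occurred, each would contribute at least $2$, so exactly two would occur, each with $m_i=1$ and $a_i=2$, but then both would lie in $|2A|=\{S_2\}$, contradicting distinctness. Hence a single prime $\Gamma_1$ occurs, with $m_1a_1=4$ and $a_1\geq2$, so either $(m_1,a_1)=(1,4)$, in which case $\Delta=\Gamma_1$ is a prime divisor, or $(m_1,a_1)=(2,2)$, in which case $\Gamma_1\in|2A|=\{S_2\}$ and $\Delta=2S_2$.

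I do not anticipate a serious obstacle. The points that need a little care are the appeal in~(1) to the fact that, among terminal threefold singularities, only cyclic quotients give a basket consisting of a single pair of a prescribed index, and the observation in~(4) that $A$ is a genuine integral (not merely $\bQ$-) generator of $\Cl(X)$ --- which is exactly what lets one identify effective divisor classes with non-negative integral multiples of $A$ and thereby reduce the statement to the elementary arithmetic of partitions of $4$ into parts of size at least $2$.
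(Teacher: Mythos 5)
Your proposal is correct and takes essentially the same approach as the paper: the paper likewise disposes of (1)--(3) by citing the shape of $\bfB_X$ together with Mori's classification, Proposition~\ref{p.torsion} and Table~\ref{table.qgeq6}, and proves (4) from exactly the three facts you invoke ($\dim|A|=-1$, $\dim|2A|=0$ and the torsion-freeness of $\Cl(X)$). You have simply written out in full the elementary decomposition-into-prime-components argument that the paper leaves implicit.
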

	
	\begin{proof}
		The first statement follows from the form of $\bfB_X$ and \cite{Mori1985}. The second and third statements follow directly from Proposition~\ref{p.torsion} and Table~\ref{table.qgeq6}. The last statement follows from the facts that $\dim|A|=-1$, $\dim|2A|=0$ and $\Cl(X)$ is torsion-free. 
	\end{proof}
	
	We will always denote by $S_i$ the unique prime divisor in $|iA|$ for $i=2$ and $3$. Following the notation in Section~\ref{sec.sarkisov}, we consider the Sarkisov link associated to the movable linear system $\sM \coloneqq  \sS_4 = |4A|$; the arguments are very similar to the ones used in Section~\ref{s.qbQ=4}.

	\begin{lem}
		\label{l.cqQ=8}
		We have $\beta_4\geq 6\alpha$, and if $\hat f$ is birational, then $d\geq 2$.
	\end{lem}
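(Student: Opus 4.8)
The plan is to mimic the proof of Lemma~\ref{lem.abforq4} from Section~\ref{s.qbQ=4}, adapting it to the present numerical situation. Recall that here $q=8$, the movable system is $\sM=\sS_4=|4A|$, and by Lemma~\ref{l.geometryqQ=8} the basket of $X$ contains a point of local index $11$. The two assertions are logically independent, so I would treat them in turn.

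For the inequality $\beta_4\geq 6\alpha$, I would apply Lemma~\ref{lem.ct} to the point $P\in X$ of local index $r_P=11$. Near $P$ we have $-K_X\sim 8A$ with $\Cl(X,P)$ cyclic of order $11$ generated by $-K_X$; since $\sM=\sS_4=|4A|$, we get $\sM\sim -tK_X$ near $P$ with $t$ the residue of $4\cdot(8^{-1})\bmod 11$. As $8^{-1}\equiv 7\pmod{11}$, this gives $t\equiv 28\equiv 6\pmod{11}$, so $t=6$ with $0<t<r_P=11$. Lemma~\ref{lem.ct} then yields $c=\ct(X,\sM)\leq 1/6$. But by the definition in~\eqref{eq.twoeq} the canonical threshold equals $c=\alpha/\beta_4$ (this is the same identification used in the proof of Lemma~\ref{lem.abforq4}), whence $\alpha/\beta_4\leq 1/6$, i.e.\ $\beta_4\geq 6\alpha$, as claimed.

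For the second assertion, suppose $\hat f$ is birational; then by the definitions in Section~\ref{sec.sarkisov} the $\hat f$-exceptional divisor satisfies $F=f(\tilde F)\sim dA$ with $d\in\bZ_{\geq 0}$, and by Proposition~\ref{prop.sarkisov}\eqref{i1.p-sarkisov} the integer $d/e$ is the order of $\Cl(\hat X)_t$, so in particular $d\geq 1$. It remains to exclude $d=1$. If $d=1$ then $F\in|A|$, but Lemma~\ref{l.geometryqQ=8} gives $\dim|A|=-1$, a contradiction. Hence $d\geq 2$.

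The argument is entirely routine once the modular arithmetic for $t$ is pinned down; the only place where one must be a little careful is verifying that near the index-$11$ point the linear system $|4A|$ is indeed linearly equivalent to $-6K_X$ (equivalently, computing $4\cdot 8^{-1}\bmod 11$ correctly) and that $0<6<11$, so that Lemma~\ref{lem.ct} genuinely applies. This is the analogue of the step ``$A\sim -2K_X$ near the point of local index $7$'' used throughout Section~\ref{s.qbQ=4}, and presents no real obstacle.
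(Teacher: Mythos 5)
Your argument is correct and is essentially the paper's own proof: apply Lemma~\ref{lem.ct} at the index-$11$ point with $\sS_4\sim-6K_X$ there (your computation $4\cdot 8^{-1}\equiv 6\pmod{11}$ is right), identify $c=\alpha/\beta_4$ via \eqref{eq.twoeq}, and rule out $d\leq 1$ using $\dim|A|=-1$. The extra detail you supply on the modular arithmetic and on excluding $d=0,1$ is just an expansion of the same reasoning.
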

	
	\begin{proof}
		We apply Lemma~\ref{lem.ct} with $P$ being the point of local index $11$ on $X$, where $c\leq 1/6$ as $\sM=\sS_4\sim -6K_X$ near the point $P$. Then we obtain the first inequality from \eqref{eq.twoeq}. For the second statement, note that $\dim|A|=-1$, so $d\geq 2$ if $\hat{f}$ is birational. 
	\end{proof}
	
	The following simple but useful observation follows from item~\eqref{i.geometryqQ=8} of Lemma~\ref{l.geometryqQ=8}.
	
	\begin{lem}
		\label{l.elementsinMqQ=8}
		Any element in $\bar{\sS}_4$ is of the form $\bar{\Delta}+a\bar{E}$, where $a$ is a non-negative integer and $\bar{\Delta}$ is the strict transform of an element $\Delta$ in $\sS_4$. In particular, if $\bar{\Delta}\not=2\bar{S}_2$, then $\bar{\Delta}$ is a prime divisor, where $\bar{S}_2$ is the strict transform of $S_2$.
	\end{lem}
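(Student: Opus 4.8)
The plan is to reduce the statement, using the formalism of birational transforms of linear systems from Section~\ref{sec.sarkisov}, to the description of the members of $\sS_4=|4A|$ supplied by item~\eqref{i.geometryqQ=8} of Lemma~\ref{l.geometryqQ=8}. This is the exact analogue of Lemma~\ref{l.hatS2andS3} from Section~\ref{s.qbQ=4}; the only difference is that on $X$ a member of $|4A|$ need not be prime but may be the non-reduced divisor $2S_2$.

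I would argue as follows. By definition, the strict transform $\tilde{\sS}_4$ of $\sM=\sS_4$ on $\tilde X$ is the linear system whose member attached to $\Delta\in\sS_4$ is $f^{*}\Delta-\beta_4\tilde E$, where $\beta_4$ is the quantity appearing in \eqref{eq.twoeq}, namely the $\tilde E$-multiplicity of a general $f^{*}\Delta$. Writing $\tilde\Delta$ for the ordinary strict transform of $\Delta$ on $\tilde X$, this member equals $\tilde\Delta+a\tilde E$ with $a=\mult_{\tilde E}(f^{*}\Delta)-\beta_4\geq 0$; since $\tilde\Delta+a\tilde E$ is a member of a linear system, hence an integral effective divisor, and $\tilde\Delta$ is integral effective and avoids $\tilde E$, the coefficient $a$ is a non-negative integer, and $a=0$ for general $\Delta$. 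By item~\eqref{i.geometryqQ=8} of Lemma~\ref{l.geometryqQ=8}, $\Delta$ is either $2S_2$ or a prime divisor, so $\tilde\Delta$ is either $2\tilde S_2$ or a prime divisor; here $\tilde S_2\neq\tilde E$ because $\tilde E$ is $f$-exceptional whereas $S_2$ is not. Finally, since $\chi\colon\tilde X\dashrightarrow\bar X$ is an isomorphism in codimension one, forming strict transforms along $\chi$ induces a bijection on prime divisors that respects sums of effective divisors; it carries $\tilde E$, $\tilde S_2$, $\tilde\Delta$ and $\tilde\Delta+a\tilde E$ to $\bar E$, $\bar S_2$, $\bar\Delta$ and $\bar\Delta+a\bar E$, and therefore every member of $\bar{\sS}_4$ has the asserted form, with $\bar\Delta=2\bar S_2$ or $\bar\Delta$ prime.

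I do not expect a genuine obstacle here: the content is essentially item~\eqref{i.geometryqQ=8} of Lemma~\ref{l.geometryqQ=8} combined with the bookkeeping already used for Lemma~\ref{l.hatS2andS3}. The two points worth stating explicitly are that the transform of a linear system may pick up the extra summand $a\bar E$ on non-general members — which is precisely why the conclusion is phrased with $+a\bar E$ rather than with $\bar\Delta$ alone — and that the prime divisor $\bar S_2$ is never equal to the exceptional divisor $\bar E$.
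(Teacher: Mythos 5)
Your proposal is correct and matches the paper's (essentially unwritten) argument: the paper simply asserts that the lemma follows from item~\eqref{i.geometryqQ=8} of Lemma~\ref{l.geometryqQ=8}, and your write-up just makes explicit the standard bookkeeping for strict transforms of linear systems under the divisorial contraction $f$ and the small map $\chi$. The two points you flag — integrality and non-negativity of $a$, and $\bar S_2\neq\bar E$ — are exactly the details being suppressed, and you handle them correctly.
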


    As with the previous section, the remainder of this section is dedicated to the proof of Theorem~\ref{t.qQ=8} on a case-by-case basis according to the type of $f(\tilde{E})$.
	
	\subsection{The image $\boldsymbol{f(\tilde{E})}$ is a curve or a Gorenstein point} 
	
	In this case, both $\alpha$ and $\beta_4$ are integers, and it follows from \eqref{eq.key} and Lemma~\ref{l.cqQ=8} that 
	\begin{equation}
		\label{e.qQ=8-nonGorenstein}
		4\hat{q} = 8s_4 + (8\beta_4-4\alpha)e \geq 8s_4 + 44\alpha e \geq 44.
	\end{equation}
	So we have $\hat{q}\geq 11$. In particular, the morphism $\hat{f}$ is birational by Proposition~\ref{prop.sarkisov}, and $\Cl(\hat{X})$ is torsion-free by Proposition~\ref{p.torsion}. So $e=d\geq 2$ by Proposition~\ref{prop.sarkisov}, and \eqref{e.qQ=8-nonGorenstein} implies $\hat{q}\geq 22$, which contradicts Lemma~\ref{l.indexQFano}.
	
	\subsection{The image $\boldsymbol{f(\tilde{E})}$ is a point of local index 3} 
	
	In this case,  we have $\alpha=1/3$ by \cite{Kawamata1996}. Moreover, as $A\sim -2K_X$ near the point $f(\tilde{E})$, we obtain $t_4=2$. In particular, by \eqref{e.localindex-integer} and Lemma~\ref{l.cqQ=8}, there exists an integer $m_4\geq 2$ such that $\beta_4=2/3+m_4$. Then applying \eqref{eq.key} to $k=4$ yields
	\begin{equation}
     \label{e.qQ8-index3}
	    \hat{q}=2s_4+(2m_4+1)e.
	\end{equation}
	As $e\geq 1$ and $m_4\geq 2$, we have $\hat{q}\geq 5$. In particular, the morphism $\hat{f}$ is birational, and thus $s_4\geq 1$ by Proposition~\ref{prop.sarkisov}. Therefore, we have $\hat{q}\geq 7$. 
	
	If $e=1$, then the group $\Cl(\hat{X})$ is not torsion-free by Proposition~\ref{prop.sarkisov} as $d\geq 2$, and then it follows from Proposition~\ref{p.torsion} that $\hat{q}=7$. Then $s_4=1$ by \eqref{e.qQ8-index3}, which contradicts Table~\ref{table.qgeq5} as $\dim\hat{\sS}_4=1$.
	
	If $e\geq 2$, then $\hat{q}\geq 12$ and the group $\Cl(\hat{X})$ is torsion-free by Proposition~\ref{p.torsion}. Thus we have the inequality $\dim|s_4 A_{\hat{X}}|\geq \dim\hat{\sS}_4=1$.	Then one can easily derive from Table~\ref{table.qgeq6} that there is no solution for \eqref{e.qQ8-index3}.

	\subsection{The image $\boldsymbol{f(\tilde{E})}$ is a point of local index 5}
	
	In this case,  we have $\alpha=1/5$ by \cite{Kawamata1996}. Moreover, as $A\sim -2K_X$ near the point $f(\tilde{E})$, we obtain $t_2=4$, $t_3=1$ and $t_4=3$. Then it follows from \eqref{e.localindex-integer} and Lemma~\ref{l.cqQ=8} that $\beta_2=4/5+m_2$ for some $m_2\in \bZ_{\geq 0}$, $\beta_3=1/5+m_3$ for some $m_3\in \bZ_{\geq 0}$ and $\beta_4=3/5+m_4$ for some $m_4\in \bZ_{>0}$. Now applying \eqref{eq.key} to $k=4$ yields\looseness=1
	\begin{equation}
 \label{eq.f(E)index5}
	    \hat{q}=2s_4 + (2m_4+1)e.
	\end{equation}
	As $e\geq 1$ and $m_4\geq 1$, we have $\hat{q}\geq 3$. On the other hand, if $\hat{q}\geq 4$, then $\hat{f}$ is birational and $s_4\geq 1$ by Proposition~\ref{prop.sarkisov}. So $\hat{q}\not=4$ or $6$. If $\hat{q}\geq 8$, then $\Cl(\hat{X})$ is torsion-free by Proposition~\ref{p.torsion}, so we have $\dim|s_4 A_{\hat{X}}| \geq \dim\hat{\sS}_4=1$ and $e=d\geq 2$ by Proposition~\ref{prop.sarkisov}. Then one can derive from Table~\ref{table.qgeq6} that there is no solution for \eqref{eq.f(E)index5} in the case $\hat{q}\geq 8$. In conclusion, the possibilities for $(\hat{q},e,s_4)$ are the following:
	\begin{itemize}
		\item $\hat{q}=3$, $e=1$ and $s_4=0$;
		
		\item $\hat{q}=5$, $e=1$ and $s_4=1$;
		
		\item $\hat{q}=7$, $e=1$ and $1\leq s_4\leq 2$.
	\end{itemize}
	Moreover, the last two cases can be easily excluded by applying \eqref{eq.key} to $k=3$. Thus it remains to consider the first case. In this case, by Proposition~\ref{prop.sarkisov}, the morphism $\hat{f}$ is not birational, and we have $\hat{X}\cong \bP^1$. In particular, $\bar{\sS}_4$ is $\hat f$-vertical as $s_4=0$ and hence $\bar{\sS}_4=\hat{f}^*|\sO_{\bP^1}(1)|$ as $\dim \bar{\sS}_4=1$. Thanks to Lemma~\ref{l.elementsinMqQ=8}, the divisor $2\bar{S}_2\in \bar{\sS}_4$ is the only (cycle-theoretic) fibre of $\hat{f}$ which is not a prime divisor on~$\bar{X}$. Then applying \eqref{eq.key} to $k=3$ yields $s_3+m_3=1$. If $s_3=0$, then $\bar{S}_3$ is $\hat{f}$-vertical. In particular, there exists a positive integer $a$ such that $a\bar{S}_3\in \bar{\sS}_4$. Pushing it forward to $X$ yields $aS_3\in \sS_4$, which gives a contradiction. So we have $s_3=e=1$ and $m_3=0$. Notice that $\Cl(\bar X)$ is generated by $\bar S_2$ and $\bar E$, so there exists an integer $c$ such that $\bar{S}_3 - \bar{E} \sim c\bar{S}_2$, which implies that $S_3 \sim cS_2$ by pushing forward to $X$. This is absurd.
	
	\subsection{The image $\boldsymbol{f(\tilde{E})}$ is a point of local index 11}
	
	Then we have $\alpha=1/11$ by \cite{Kawamata1996}. Moreover, as $A\sim -7K_X$ near the point $f(\tilde{E})$, we obtain $t_2=3$, $t_3=10$, $t_4=6$, $t_5=2$ and $t_6=9$. In particular, by \eqref{e.localindex-integer}, we have $\beta_k=t_k/11+m_k$ for some $m_k\in \bZ_{\geq 0}$ ($2\leq k\leq 6$). Then applying \eqref{eq.key} to $k=3$ yields
	\begin{equation}
		\label{e.k3index11qQ8}
		3\hat{q} = 8s_3 + (8m_3 + 7)e.
	\end{equation}
	Then one can derive that $3\hat{q}\geq 14$ and hence $\hat{q}\geq 5$. In particular, the morphism $\hat{f}$ is birational by Proposition~\ref{prop.sarkisov}. Moreover, if $\hat{q}\geq 8$, then $\Cl(\hat{X})$ is torsion-free by Proposition~\ref{p.torsion}, so $e=d\geq 2$ and $\dim|s_3A_{\hat{X}}|\geq 0$ if $s_3\not=0$ by Proposition~\ref{prop.sarkisov}. Combining these facts with \eqref{e.k3index11qQ8} and Table~\ref{table.qgeq6} yields the following possibilities for $(\hat{q},e)$:
	\begin{itemize}
		\item $\hat{q}=5$, $e=1$;
		
		\item $\hat{q}=7$, $e=3$;
		
		\item $\hat{q}=17$, $e=5$.
	\end{itemize}
	In the following, each case will be considered individually. Moreover, we also need the following two additional equalities, which are obtained by applying \eqref{eq.key} to $k=2$ and $k=4$, respectively:
    \begin{align}
        \label{e.k2index11qQ8}
		\hat{q}=4s_2+(4m_2+1)e,\\
        \label{e.k4index11qQ8}
		\hat{q}=2s_4+(2m_4+1)e.
    \end{align}
	
	
	\subsubsection{The case $\boldsymbol{\hat{q}=5}$ and $\boldsymbol{e=1}$}
	
	As $d\geq 2$, the group $\Cl(\hat{X})$ is not torsion-free by Proposition~\ref{prop.sarkisov}. Moreover, note that $1\leq s_4\leq 2$ by \eqref{e.k4index11qQ8} and $\dim \hat{\sS}_4=1$, so it follows from~\eqref{i2.p-sarkisov} and Table~\ref{table.qgeq5} that $s_4=2$ and the numerical type of $\hat X$ appears as one of \textnumero\,4 and \textnumero\,5 in Table~\ref{table.qgeq5}. In particular, we have $d=2$ and $S_2=F$. This implies $s_3=m_2=1$ and $m_3=s_2=0$ by Proposition~\ref{prop.sarkisov}, \eqref{e.k3index11qQ8} and \eqref{e.k2index11qQ8}. 
	As $m_4=0$ by \eqref{e.k4index11qQ8}, we obtain $\beta_4=6/11$; as $m_2=1$,  we obtain $\beta_2=14/11$.
	Since $2S_2\in \sS_4$, we have 
    \[
    2\bar F+2\bar E=2\bar S_2+(2\beta_2-\beta_4)\bar{E}\in \bar\sS_4.
    \]
    Pushing it forward to $\hat X$ yields $2\hat{E}\in \hat{\sS}_4$. This implies $2\hat{S}_3 \sim 2A_{\hat{X}}\sim 2\hat{E} \in \hat{\sS}_4$ as $\hat{S}_3\sim_{\bQ} A_{\hat{X}}\sim_{\bQ} \hat{E}$ and $d=|\Cl(\hat{X})_t|=2$. Then we get $|2A_{\hat X}|=\hat{\sS}_4$ as $\dim |2A_{\hat X}|=\dim \hat{\sS}_4=1$, and hence $2\hat{S}_3\in \hat\sS_4$, 
	which contradicts Lemma~\ref{l.elementsinMqQ=8} as $\hat{S}_3\not=\hat{E}$.
	
	
	\subsubsection{The case $\boldsymbol{\hat{q}=7}$, $\boldsymbol{e=3}$}

        In this case, we have $s_4=2$ by \eqref{e.k4index11qQ8}.
	We also have $s_3=m_3=0$ by \eqref{e.k3index11qQ8} and $S_3=F$ by Proposition~\ref{prop.sarkisov}. In particular, we get $d=e=3$. Thus $s_2>0$ and $\Cl(\hat{X})$ is torsion-free by Proposition~\ref{prop.sarkisov}. Then we obtain $s_2=1$ by \eqref{e.k2index11qQ8}. As $\beta_6=9/11+m_6$, applying \eqref{eq.key} to $k=6$ yields $3m_6+s_6=3$. As $\dim\sS_6=3$ and $\hat{f}$ is birational, it follows from Proposition~\ref{prop.sarkisov} that $s_6>0$. Therefore, $s_6=3$ and $m_6=0$. In particular, since $\Cl(\hat{X})$ is torsion-free, we get $\dim|3A_{\hat{X}}|\geq \dim\hat{\sS}_6=3$, and hence the numerical type of $\hat{X}$ appears as one of \textnumero\,7--9 in Table~\ref{table.qgeq6}. In particular, we have $\dim|-K_{\hat{X}}|\geq 15$, and it follows from \cite[Theorem 1.1]{Prokhorov2016} that $\hat{X}$ is isomorphic to one of the following:
	\[
	\bP(1^2,2,3),\quad X_6\subset \bP(1,2^2,3,5),\quad X_6\subset \bP(1,2,3^2,4).
	\]
	Now we divide the proof into three subcases according to the type of $\hat{X}$.
	
	
	\paragraph{\textit{The subcase where $\hat{X}\cong \bP(1^2,2,3)$}}
	As $\dim|A_{\hat{X}}|=1$ and $e=3$, there exists a prime divisor $\hat{D}\in |A_{\hat{X}}|$ which is different from $\hat{S}_2$ and $\hat{E}$. Note that we have 
	\[
	3=\dim |2A_{\hat{X}}|=\dim|3A_{\hat{X}}|-\dim \hat{\sS}_6.
	\]
    In particular, since both $\hat D+|2A_{\hat{X}}|$ and $\hat{\sS}_6$ are sublinear systems of $|3A_{\hat{X}}|$, there must exist an element $\hat{\Delta}\in |2A_{\hat{X}}|$ such that $\hat{D}+\hat{\Delta}\in \hat{\sS}_6$. Then there exists a unique rational number $\gamma$ such that $\hat{f}^*(\hat{D}+\hat{\Delta})-\gamma \bar{F}$ is an effective integral divisor contained in $\bar{\sS}_6$. Denote by $D$ and $\Delta$ the strict transforms of $\hat{D}$ and $\hat{\Delta}$ on~$X$, respectively. Then $D$ is different from $S_2$ and $S_3(=F)$. In particular, we have $D\sim a A$ for some integer $a\geq 4$ because $\Cl(X)$ is torsion-free.  Pushing the integral divisor $\hat{f}^*(\hat{D}+\hat{\Delta})-\gamma \bar{F}$ forward to $X$ shows that $D+\Delta +\delta F\in \sS_6$ for some $\delta\in \bZ_{\geq 0}$. As $\hat{E}\not=\hat{\Delta}$, we have $\Delta\not=0$. However, as $d=3$ and $a\geq 4$, we must have $\delta=0$, $\Delta=S_2$ and $D\in |4A|$. So $\hat \Delta=\hat{S}_2\in |2A_{\hat{X}}|$, which is impossible as $s_2=1$.
	
	
	\paragraph{\textit{The subcase where $\hat{X}\cong X_6\subset \bP(1,2^2,3,5)$}}
	In this case, we have
	\begin{center}
		$\dim|3A_{\hat{X}}|=\dim\sS_6=3$\quad and \quad $2=\dim|2A_{\hat{X}}|>\dim\sS_4=1$.
	\end{center}
	So $\hat{\sS}_6=|3A_{\hat{X}}|$. Let $\bar{\sN}$ be the sublinear system of $\bar{\sS}_6$ such that $\hat{f}_*\bar{\sN}$ coincides with $\hat{S}_2+|2A_{\hat{X}}|$ as sublinear systems of $|3 A_{\hat{X}}|=\hat{\sS}_6$. Denote by $\sN$ the push-forward of $\bar{\sN}$ to $X$. Since $S_2$ is contained in the fixed part of $\sN$, we get $\sN'\coloneqq \sN-S_2 \subset \sS_4$, which is impossible as $\dim\sN'=\dim\sN=\dim|2A_{\hat{X}}|=2$ and $\dim\sS_4=1$.
 
	
	\paragraph{\textit{The subcase where $\hat{X}\cong X_6\subset \bP(1,2,3^2,4)$}} Recall from \cite[Section~4.9]{Prokhorov2022} that we can write
        \[
	K_{\bar{X}} \sim_{\bQ} \hat{f}^*K_{\hat{X}} + c\bar{F},\quad \bar{\sS}_k \sim_{\bQ} \bar{f}^*\hat{\sS}_k-\gamma_k\bar{F},\quad \bar{E} \sim_{\bQ} \hat{f}^*\hat{E}-\delta\bar{F}
	\]
	with $c\in \bQ_{>0}$ and $\gamma_k,\delta\in \bQ_{\geq 0}$. For any $k\geq 1$ and $k\not=3$, it follows from \cite[(4.5)]{Prokhorov2022} that we have
	\begin{equation}
		\label{e.gammakqQ=8}
		-8s_k+7k=3(cs_k-7\gamma_k).
	\end{equation}
	Then applying \eqref{e.gammakqQ=8} to $k=2$, $4$ and $6$ yields
	\[
	\gamma_2=\frac{c-2}{7},\quad \gamma_4=2\gamma_2,\quad \gamma_6=3\gamma_2.
	\]
	In particular, we get $c\geq 2$, and thus $\hat{f}(\bar{F})$ must be a point as $c=1$ if $\hat{f}(\bar{F})$ is a curve. If $c>2$, then $\gamma_4>0$ and $\gamma_6>0$. In particular, as $|2A_{\hat{X}}|=\hat{\sS}_4$ and $|3A_{\hat{X}}|=\hat{\sS}_6$, the point $\hat{f}(\bar{F})$ is contained in the base loci of $|2A_{\hat{X}}|$ and $|3A_{\hat{X}}|$. So we obtain
	\[
	\hat{f}(\bar{F})=[0:0:0:0:1] \in \hat{X}=X_6\subset \bP(1,2,3^2,4).
	\]
	Therefore, $\hat{f}(\bar{F})$ is a point of local index four, and thus $c=1/4$ by \cite[Remark 5.1]{Prokhorov2016}, which gives a contradiction. So we have $c=2$ and then $\gamma_4=0$. Since $|2A_{\hat{X}}|=\hat{\sS}_4$ is a pencil, there exists an element $\hat{D}\in \hat{\sS}_4$ containing $\hat{f}(\bar{F})$. In particular, as $\gamma_4=0$, there exists a positive integer $a$ such that
	\[
	\hat{f}^*\hat{D}=\bar{D}+a\bar{F}\in\bar{\sS}_4 \sim_{\bQ} \hat{f}^*\hat{\sS}_4= \hat{f}^*|2A_{\hat{X}}|,
	\]
	where $\bar{D}$ is the strict transform of $\hat{D}$. This contradicts Lemma~\ref{l.elementsinMqQ=8} as $e=d=3$. 
	
	
	\subsubsection{The case $\boldsymbol{\hat{q}=17}$, $\boldsymbol{e=5}$} 
	
	In this case, we have $s_4=6$ and $\dim|6A_{\hat{X}}|=\dim\hat{\sS}_4=1$ by \eqref{e.k4index11qQ8} and Table~\ref{table.qgeq6}. So $\Cl(X)$ is torsion-free by Proposition~\ref{p.torsion}, and so $|6A_{\hat{X}}|=\hat{\sS}_4$. Let $D$ be the unique element in $|2A_{\hat{X}}|$. Then $D$ is a prime divisor and $3D\in \hat{\sS}_4$. This contradicts Lemma~\ref{l.elementsinMqQ=8} as $e=5$.

\renewcommand\thesection{\Alph{section}}
\setcounter{section}{1}	
	
	\section*{Appendix. Database for \texorpdfstring{$\boldsymbol{\bQ}$}{Q}-Fano threefolds with large \texorpdfstring{$\boldsymbol{q\bQ}$}{qQ}}
        \addcontentsline{toc}{section}{Appendix. Database for \texorpdfstring{$\boldsymbol{\bQ}$}{Q}-Fano threefolds with large \texorpdfstring{$\boldsymbol{q\bQ}$}{qQ}}
\setcounter{subsection}{0}      
        
	\subsection{Data for $\boldsymbol{q\bQ(X)\geq 6}$}
	
	We collect in Table~\ref{table.qgeq6} below the possible numerical invariants for terminal Fano threefold with $q\bQ(X)\geq 6$ and $b_X<3$. It can be obtained by using the same computer program as that in the proof of Lemma~\ref{l.Fanothreefoldscandidates} or the \textsc{Grdb} \cite{BrownKasprzyk2009}. We denote by $A$ a Weil divisor such that $-K_X\sim q\bQ(X) A$. Moreover, we remark that the assumption $b_X<3$ holds automatically if $X$ is a terminal $\bQ$-Fano threefold with $q\bQ(X)\geq 6$ by \cite[Theorem 4.4]{LiuLiu2023}. 
	
	Assume in addition that $X$ is a terminal $\bQ$-Fano threefold. Then we use the symbol ``$+$'' as a superscript of its numbering \textnumero\, if it can be geometrically realised by appropriate examples, ``$+!$'' if it is completely described, ``$-$'' if it cannot occur and ``$?$'' if it is unknown (see \cite{BrownSuzuki2007a,Prokhorov2010,Prokhorov2013,Prokhorov2016,Prokhorov2022a} for the details). The symbol ``$\dagger$'' is used as a subscript in the case where $\Cl(X)$ is possibly not torsion-free (see Proposition~\ref{p.torsion}).
	
	\renewcommand*{\arraystretch}{1.6}
	\begin{longtable}{|M{0.7cm}|M{0.4cm}|M{1.8cm}|M{0.7cm}|M{0.7cm}|M{1.2cm}|M{0.61cm}|M{0.61cm}|M{0.61cm}|M{0.61cm}|M{0.61cm}|M{0.61cm}|M{0.61cm}|M{0.61cm}|M{0.61cm}|}
		\caption{Data for $q=qW(X)=q\bQ(X)\geq 6$}
		\label{table.qgeq6}
		\\
		\hline
		
		\multirow{2}{*}{\textnumero}
		& \multirow{2}{*}{$q$} 
		& \multirow{2}{*}{$\cR_X$} 
		& \multirow{2}{*}{$c_1^3$} 
		& \multirow{2}{*}{$c_2c_1$} 
		& \multirow{2}{*}{$b_X\approx$} 
		& \multicolumn{9}{c|}{$\dim|kA|$}
		\\
		\cline{7-15}

		&
		&
		&
		&
		&
		& $|A|$
		& $|2A|$
		& $|3A|$
		& $|4A|$
		& $|5A|$
		& $|6A|$
		& $|7A|$
		& $|8A|$
		& $|9A|$
		\\
		\hline
		
		1$^{+!}$
		& $6$
		& $\{5\}$
		& $\frac{216}{5}$
		& $\frac{96}{5}$
		& $2.25$
		& $1$
		& $3$
		& $6$
		& $10$
		& $16$
		& $23$
		& $32$
		& $43$
		& $56$
		\\
		\hline
		
		2$^?$
		& $6$
		& $\{5,7\}$
		& $\frac{648}{35}$
		& $\frac{432}{35}$
		& $1.5$
		& $0$
		& $1$
		& $2$
		& $4$
		& $7$
		& $10$
		& $14$
		& $18$
		& $24$
		\\
		\hline
		
		3$^?$
		& $6$
		& $\{5,17\}$
		& $\frac{432}{85}$
		& $\frac{192}{85}$
		& $2.25$
		& $0$
		& $0$
		& $0$
		& $0$
		& $1$
		& $2$
		& $3$
		& $4$
		& $5$
		\\
		\hline
		
		4$^?$
		& $6$
		& $\{5,11\}$
		& $\frac{216}{55}$
		& $\frac{456}{55}$
		& $0.4736$
		& $0$
		& $0$
		& $0$
		& $0$
		& $1$
		& $2$
		& $3$
		& $4$
		& $5$
		\\
		\hline
		
		5$^?$
		& $6$
		& $\{5,7^2\}$
		& $\frac{432}{35}$
		& $\frac{192}{35}$
		& $2.25$
		& $-1$
		& $0$
		& $1$
		& $2$
		& $4$
		& $6$
		& $9$
		& $11$
		& $15$
		\\
		\hline
		
		6$^?$
		& $6$
		& $\{7,11\}$
		& $\frac{432}{77}$
		& $\frac{480}{77}$
		& $0.9$
		& $-1$
		& $0$
		& $0$
		& $1$
		& $1$
		& $3$
		& $4$
		& $5$
		& $7$
		\\
		\hline
		
		7$^{+!}$
		& $7$
		& $\{2,3\}$
		& $\frac{343}{6}$
		& $\frac{119}{6}$
		& $2.8823$
		& $1$
		& $3$
		& $6$
		& $10$
		& $15$
		& $22$
		& $30$
		& $40$
		& $52$
		\\
		\hline
		
		8$^{+!}$
		& $7$
		& $\{2^3,5\}$
		& $\frac{343}{10}$
		& $\frac{147}{10}$
		& $2.3333$
		& $0$
		& $2$
		& $3$
		& $6$
		& $9$
		& $13$
		& $18$
		& $24$
		& $31$
		\\
		\hline
		
		9$^{+!}$
		& $7$
		& $\{2,3^2,4\}$
		& $\frac{343}{12}$
		& $\frac{161}{12}$
		& $2.1304$
		& $0$
		& $1$
		& $3$
		& $5$
		& $7$
		& $11$
		& $15$
		& $20$
		& $26$
		\\
		\hline
		
		10$^+$
		& $7$
		& $\{2^2,3,5\}$
		& $\frac{343}{15}$
		& $\frac{203}{15}$
		& $1.6896$
		& $0$
		& $1$
		& $2$
		& $4$
		& $6$
		& $9$
		& $12$
		& $16$
		& $21$
		\\
		\hline
		
		11$^?$
		& $7$
		& $\{3,6,9\}$
		& $\frac{343}{18}$
		& $\frac{119}{18}$
		& $2.8823$
		& $0$
		& $0$
		& $1$
		& $2$
		& $4$
		& $7$
		& $10$
		& $13$
		& $17$
		\\
		\hline
		
		12$^?_\dagger$
		& $7$
		& $\{2,6,10\}$
		& $\frac{343}{30}$
		& $\frac{203}{30}$
		& $1.6896$
		& $0$
		& $0$
		& $0$
		& $1$
		& $2$
		& $4$
		& $6$
		& $8$
		& $10$
		\\
		\hline
		
		13$^?$
		& $7$
		& $\{2,3,13\}$
		& $\frac{343}{78}$
		& $\frac{539}{78}$
		& $0.6363$
		& $0$
		& $0$
		& $0$
		& $0$
		& $0$
		& $1$
		& $2$
		& $3$
		& $4$
		\\
		\hline
		
		14$^?_\dagger$
		& $7$
		& $\{2^2,3,4,8\}$
		& $\frac{343}{24}$
		& $\frac{161}{24}$
		& $2.1304$
		& $-1$
		& $0$
		& $1$
		& $2$
		& $3$
		& $5$
		& $7$
		& $10$
		& $12$
		\\
		\hline
		
		15$^?$
		& $7$
		& $\{2^2,3,11\}$
		& $\frac{343}{33}$
		& $\frac{245}{33}$
		& $1.4$
		& $-1$
		& $0$
		& $1$
		& $1$
		& $2$
		& $4$
		& $5$
		& $7$
		& $9$
		\\
		\hline
		
		16$^+$
		& $7$
		& $\{2^3,3,4,5\}$
		& $\frac{343}{60}$
		& $\frac{497}{60}$
		& $0.6901$
		& $-1$
		& $0$
		& $0$
		& $1$
		& $1$
		& $2$
		& $3$
		& $4$
		& $5$
		\\
		\hline
		
		17$^?$
		& $7$
		& $\{2^3,5,8\}$
		& $\frac{343}{40}$
		& $\frac{273}{40}$
		& $1.2564$
		& $-1$
		& $0$
		& $0$
		& $1$
		& $2$
		& $3$
		& $4$
		& $6$
		& $7$
		\\
		\hline
		
		18$^?$
		& $7$
		& $\{3,8,9\}$
		& $\frac{343}{72}$
		& $\frac{329}{72}$
		& $1.0425$
		& $-1$
		& $-1$
		& $0$
		& $0$
		& $1$
		& $1$
		& $2$
		& $3$
		& $4$
		\\
		\hline
		
		19$^{+!}$
		& $8$
		& $\{3^2,5\}$
		& $\frac{512}{15}$
		& $\frac{208}{15}$
		& $2.4615$
		& $0$
		& $1$
		& $3$
		& $4$
		& $7$
		& $10$
		& $13$
		& $18$
		& $23$
		\\
		\hline
		
		20$^{+!}$
		& $8$
		& $\{3,7\}$
		& $\frac{512}{21}$
		& $\frac{304}{21}$
		& $1.6842$
		& $0$
		& $1$
		& $2$
		& $3$
		& $5$
		& $7$
		& $10$
		& $13$
		& $17$
		\\
		\hline
		
		21$^+$
		& $8$
		& $\{5,7\}$
		& $\frac{512}{35}$
		& $\frac{432}{35}$
		& $1.1851$
		& $0$
		& $0$
		& $1$
		& $2$
		& $3$
		& $4$
		& $6$
		& $8$
		& $10$
		\\
		\hline

		22$^-$
		& $8$
		& $\{3,5,11\}$
		& $\frac{2048}{165}$
		& $\frac{928}{165}$
		& $2.2068$
		& $-1$
		& $0$
		& $0$
		& $1$
		& $2$
		& $3$
		& $4$
		& $6$
		& $8$
		\\
		\hline
		
		23$^{+!}$
		& $9$
		& $\{2,4,5\}$
		& $\frac{729}{20}$
		& $\frac{279}{20}$
		& $2.6129$
		& $0$
		& $1$
		& $2$
		& $4$
		& $6$
		& $8$
		& $11$
		& $15$
		& $19$
		\\
		\hline
		
		24$^+$
		& $9$
		& $\{2^3,5,7\}$
		& $\frac{729}{70}$
		& $\frac{549}{70}$
		& $1.3278$
		& $-1$
		& $0$
		& $0$
		& $1$
		& $1$
		& $2$
		& $3$
		& $4$
		& $5$
		\\
		\hline
		
		25$^{+!}$
		& $11$
		& $\{2,3,5\}$
		& $\frac{1331}{30}$
		& $\frac{451}{30}$
		& $2.9512$
		& $0$
		& $1$
		& $2$
		& $3$
		& $5$
		& $7$
		& $9$
		& $12$
		& $15$
		\\
		\hline
		
		26$^+$
		& $11$
		& $\{2,5,7\}$
		& $\frac{1331}{70}$
		& $\frac{759}{70}$
		& $1.7536$
		& $0$
		& $0$
		& $0$
		& $1$
		& $2$
		& $3$
		& $4$
		& $5$
		& $6$
		\\
		\hline
		
		27$^+$
		& $11$
		& $\{2^2,3,4,7\}$
		& $\frac{1331}{84}$
		& $\frac{649}{84}$
		& $2.0508$
		& $-1$
		& $0$
		& $0$
		& $1$
		& $1$
		& $2$
		& $3$
		& $4$
		& $5$
		\\
		\hline
		
		28$^{+!}$
		& $13$
		& $\{3,4,5\}$
		& $\frac{2197}{60}$
		& $\frac{767}{60}$
		& $2.8644$
		& $0$
		& $0$
		& $1$
		& $2$
		& $3$
		& $4$
		& $5$
		& $7$
		& $9$
		\\
		\hline
		
		29$^{+!}$
		& $13$
		& $\{2,3^2,5,7\}$
		& $\frac{2197}{210}$
		& $\frac{1157}{210}$
		& $1.8988$
		& $-1$
		& $-1$
		& $0$
		& $0$
		& $0$
		& $1$
		& $1$
		& $1$
		& $2$
		\\
		\hline
		
		30$^{+!}$
		& $17$
		& $\{2,3,5,7\}$
		& $\frac{4913}{210}$
		& $\frac{1717}{210}$
		& $2.8613$
		& $-1$
		& $0$
		& $0$
		& $0$
		& $1$
		& $1$
		& $2$
		& $2$
		& $3$
		\\
		\hline
		
		31$^{+!}$
		& $19$
		& $\{3,4,5,7\}$
		& $\frac{6859}{420}$
		& $\frac{2489}{420}$
		& $2.7557$
		& $-1$
		& $-1$
		& $0$
		& $0$
		& $0$
		& $0$
		& $1$
		& $1$
		& $1$
		\\
		\hline
	\end{longtable}
        
	\subsection{Data for $\boldsymbol{qW(X)\not=q\bQ(X)\geq 3}$}
	
	We collect in Table~\ref{table.qWnot=qQ=4} below the possible numerical invariants for terminal $\bQ$-Fano threefolds with $qW(X)\not=q\bQ(X)\geq 3$, which is obtained in \cite[Proposition 3.2]{Prokhorov2024}. 

	\renewcommand*{\arraystretch}{1.6}
	\begin{longtable}[c]{|M{0.7cm}|M{1cm}|M{2.5cm}|M{1cm}|M{1cm}|M{1.5cm}|M{3cm}|}
		\caption{Data for $qW(X)\not= q\bQ(X)\geq 3$}
		\label{table.qWnot=qQ=4}
		\\
		\hline
		
		\textnumero
        & $q\bQ$
		& $\cR_X$
		& $c_1^3$
		& $c_2c_1$
		& $b_X\approx$
        & \cite{BrownKasprzyk2009}
		\\
		\hline

        1
        & 3
		& $\{3^4,6\}$
		& $\frac{27}{2}$
		& $\frac{15}{2}$
		& $1.8$
		& \textnumero\,30381
        \\
		\hline

        2
        & 3
		& $\{3^4,5,6\}$
		& $\frac{27}{10}$
		& $\frac{27}{10}$
		& $1$
        & \textnumero\,9014
		\\
		\hline

        3
        & 3
		& $\{2,3^2,12\}$
		& $\frac{27}{4}$
		& $\frac{21}{4}$
		& $1.2857$
        & \textnumero\,19801
		\\
		\hline
		
		4
        & 4
		& $\{2^5,6\}$
		& $\frac{64}{3}$
		& $\frac{32}{3}$
		& $2$
        & \textnumero\,35882
		\\
		\hline
		
		5
        & 4
		& $\{2^5,5,6\}$
		& $\frac{128}{15}$
		& $\frac{88}{15}$
		& $1.4545$
        & \textnumero\,23440
		\\
		\hline
		
		6
        & 4
		& $\{2^5,6,7\}$
		& $\frac{64}{21}$
		& $\frac{80}{21}$
		& $0.8$
        & \textnumero\,10111
		\\
		\hline
		
		7
        & 4
		& $\{2^3,10\}$
		& $\frac{128}{5}$
		& $\frac{48}{5}$
		& $2.6666$
        & \textnumero\,37308
		\\
		\hline
		
		8
        & 4
		& $\{2^3,3,10\}$
		& $\frac{64}{15}$
		& $\frac{104}{15}$
		& $0.6153$
        & \textnumero\,14290
		\\
		\hline

		9
        & 4
		& $\{2^3,5,10\}$
		& $\frac{64}{5}$
		& $\frac{24}{5}$
		& $2.6666$
        & \textnumero\,29220
		\\
		\hline
		
		10
        & 4
		& $\{2^3,7,10\}$
		& $\frac{256}{35}$
		& $\frac{96}{35}$
		& $2.6666$
        & \textnumero\,20313
		\\
		\hline
	\end{longtable}


\end{document}